\newtheorem{theorem}{Theorem}[section]
\newtheorem{lemma}[theorem]{Lemma}
\newtheorem{proposition}[theorem]{Proposition}
\newtheorem{remark}[theorem]{Remark}
\numberwithin{equation}{section}
\newtheorem{example}[theorem]{Example}
\def\R{{\mathbb R}}
\def\E{{{\mathbb E}\,}}
\def\N{{\mathbb N}}
\def\I{\mathbf I}
\def\J{\mathbf J}\def\R{\mathbf R}
\def\wh{\widehat}
\def\Var{{\mathop {{\rm Var\, }}}}
\def\square{{\vcenter{\vbox{\hrule height.3pt
        \hbox{\vrule width.3pt height5pt \kern5pt
           \vrule width.3pt}
        \hrule height.3pt}}}}
\def\tlint{{- \kern-0.85em \int \kern-0.2em}}
\def\dlint{{- \kern-1.05em \int \kern-0.4em}}
\def\cF{{\cal F}}
\def\Om{{\Omega}}
\def\cF{{\cal F}}
\def\Om{{\Omega}}
\def \eref#1{\hbox{(\ref{#1})}}
\def \eref#1{\hbox{(\ref{#1})}}
\newenvironment{proof}[1][Proof]{\noindent\textit{#1.} }{\hfill \rule{0.5em}{0.5em}}
\def\nn{{\nonumber}}
\begin{document}

\title{Limit theorems for functionals of two independent Gaussian processes}
\date{\today}

\author{Jian Song, Fangjun Xu and Qian Yu \thanks{F. Xu is supported by National Natural Science Foundation of China (Grant No.11401215), Natural Science Foundation of Shanghai (16ZR1409700) and 111 Project (B14019).} }
\maketitle

\begin{abstract}
\noindent 

Under certain mild conditions, some limit theorems for functionals of two independent Gaussian processes are obtained. The results apply to general Gaussian processes including fractional Brownian motion, sub-fractional Brownian motion and bi-fractional Brownian motion. A new and interesting phenomenon is that, in comparison with the results for fractional Brownian motion,  extra randomness appears in the limiting distributions for Gaussian processes with nonstationary increments, say sub-fractional Brownian motion and bi-fractional Brownian. The results are obtained based on the method of moments, in which Fourier analysis, the chaining argument  introduced in \cite{nx1} and a paring technique are employed.

\vskip.2cm \noindent {\it Keywords:} Limit theorem, Gaussian processes, method of moments, chaining argument, paring technique.

\vskip.2cm \noindent {\it Subject Classification: Primary 60F17;
Secondary 60G15, 60G22.}
\end{abstract}

\section{Introduction}

Let $\left\{X_t= (X^1_t,\dots,
X^d_t), t\geq 0\right\} $ be a $d$-dimensional Gaussian process with component processes being independent copies of a 1-dimensional centered Gaussian process. We assume that there exist some $\alpha_1>0$ and $H\in(0,1)$ such that $\Var(X^1_t)=\alpha_1 t^{2H}$ for all $t\geq 0$. Some well known Gaussian processes  possessing this property,  say Brownian motions (Bms), fractional Brownian motions (fBms), sub-fractional Brownian motions (sub-fBms) and bi-fractional Brownian motions (bi-fBms). Let $\widetilde{X}$ be an independent copy of $X$.  When $X$ and $\widetilde{X}$ are fBms, we know that the intersection local time of $X$ and $\widetilde{X}$ does not exist if $Hd=2$ (\cite{nol, wu_xiao}), and this is called the critical case. If $X$ and $\widetilde{X}$ are fBms with $H\leq 1/2$, the following convergence in law was obtained in \cite{bx}.

\begin{theorem} \label{thm0} Suppose $Hd=2$ and $f$ is a real-valued bounded measurable function on $\R^d$ with $\int_{\R^d}|f(x)||x|^{\beta}\, dx<\infty$ for some $\beta>0$. Then, for any $t_1$ and $t_2\geq 0$,
\begin{align*}
\frac{1}{n} \int^{e^{nt_1}}_0\int^{e^{nt_2}}_0 f(B^{H,1}_u-B^{H,2}_v)\, du\, dv\overset{\mathcal{L}}{\longrightarrow} C_{f,d}\, (t_1\wedge t_2)\, N^2
 \end{align*}
as $n$ tends to infinity, where 
\begin{align*}
 C_{f,d}=\frac{d}{4}\, B(\frac{d}{4},\frac{d}{4})\, \frac{1}{(2\pi)^{\frac{d}{2}}}\int_{\R^d} f(x)\,dx
\end{align*}
with $B(\cdot,\cdot)$ being the Beta function, and $N$ is a real-valued standard normal random variable.
\end{theorem}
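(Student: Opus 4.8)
The plan is to establish the convergence via the method of moments. Since $N^2$ is a $\chi^2_1$ variable, its moment generating function is finite near the origin, so its law is determined by its moments; it therefore suffices to show that for every integer $m\geq 1$,
\[
\E\left[\left(\frac1n\int_0^{e^{nt_1}}\int_0^{e^{nt_2}}f(B^{H,1}_u-B^{H,2}_v)\,du\,dv\right)^m\right]\longrightarrow C_{f,d}^{\,m}\,(t_1\wedge t_2)^m\,(2m-1)!!\,.
\]
Recognizing $(2m-1)!!=\E[N^{2m}]$ as the number of perfect matchings of a $2m$-element set already signals that a pairing structure should govern the limit.

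First I would pass to the Fourier side. Writing $f(x)=\int_{\R^d}\widehat f(\xi)e^{\mathrm i\langle\xi,x\rangle}\,d\xi$ and expanding the $m$-th power, the moment becomes a $2m$-fold time integral against a $dm$-fold frequency integral, in which the expectation factorizes as $\E[e^{\mathrm i\sum_j\langle\xi_j,B^{H,1}_{u_j}\rangle}]\cdot\E[e^{-\mathrm i\sum_j\langle\xi_j,B^{H,2}_{v_j}\rangle}]$ by independence of the two processes, each factor further splitting over the $d$ independent coordinates into the exponential of a Gaussian quadratic form in the $\xi_j$. The hypothesis $\int_{\R^d}|f(x)||x|^{\beta}\,dx<\infty$ enters here to guarantee that $\widehat f$ is H\"older continuous near the origin, so that $\widehat f(\xi)$ can be replaced by $\widehat f(0)=(2\pi)^{-d}\int_{\R^d}f(x)\,dx$ up to a controllable error; this is what ultimately produces the factor $\int_{\R^d}f(x)\,dx$ appearing in $C_{f,d}$.

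Next I would perform the logarithmic change of variables $u_j=e^{ns_j}$, $v_j=e^{n\tau_j}$ after ordering the time points. The identity $Hd=2$ makes the weights of the type $(u^{2H}+v^{2H})^{-d/2}$ homogeneous of degree $-2$ in $(u,v)$, which is precisely the source of the logarithmic divergence tamed by the prefactor $1/n$. At this stage the chaining argument of \cite{nx1} enters: using the strong local nondeterminism of fBm for $H\leq 1/2$, one evaluates and bounds the Gaussian integrals over the $\xi_j$ through the successive conditional variances along an ordering of the time points, thereby localizing the dominant mass to well-separated log-scales. The one-dimensional integral fixing the per-collision constant is $\frac{1}{2H}\int_0^\infty(1+w)^{-d/2}w^{\frac{1}{2H}-1}\,dw$, which equals $\frac d4 B\!\left(\frac d4,\frac d4\right)$ when $H=2/d$, producing the remaining constants in $C_{f,d}$.

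The decisive step is the pairing analysis. I would show that, at the leading order $n^m$, the only surviving configurations are those in which the $2m$ time variables organize into $m$ well-separated pairs, each pair contributing a single factor $C_{f,d}(t_1\wedge t_2)$: the overlap in log-scale of the $u$-range $[0,nt_1]$ and the $v$-range $[0,nt_2]$ has length $n(t_1\wedge t_2)$, which yields the dependence $(t_1\wedge t_2)^m$, while the number of admissible pairings is $(2m-1)!!$. I expect the main obstacle to be exactly this separation: rigorously proving that the non-paired (overlapping or chained) configurations contribute only $o(n^m)$, which demands sharp upper bounds on the Gaussian determinants from strong local nondeterminism and careful bookkeeping of the chaining estimates, together with uniform control of the replacement $\widehat f(\xi)\to\widehat f(0)$ across all frequency variables simultaneously. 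Once the leading term is isolated and shown to equal $C_{f,d}^{\,m}(t_1\wedge t_2)^m(2m-1)!!$, the moment problem closes and convergence in law follows.
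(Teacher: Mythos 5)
Your outline correctly identifies the overall strategy (method of moments, moment determinacy of $\chi^2_1$, Fourier inversion, replacement of $\widehat f$ by $\widehat f(0)$ using the H\"older bound $|\widehat f(\xi)-\widehat f(0)|\le c|\xi|^{\beta}$, the homogeneity of degree $-2$ forced by $Hd=2$ as the source of the $\log$ divergence, and the Beta-function identity $\frac{1}{2H}\int_0^\infty(1+w)^{-d/2}w^{\frac{1}{2H}-1}\,dw=\frac d4 B(\frac d4,\frac d4)$). But there is a genuine gap: the step you yourself flag as ``the main obstacle'' --- isolating the leading configurations and showing they contribute exactly $C_{f,d}^{\,m}(t_1\wedge t_2)^m(2m-1)!!$ while everything else is $o(1)$ after normalization --- is the entire technical content of the proof, and you do not supply it. In the paper this occupies Steps 2--5 of the proof of Proposition \ref{prop}: one restricts the ordered time increments to the set $D_{m,\gamma}$ where all gaps $\Delta u_k$ are pairwise uncomparable (ratio outside $(1/\gamma,\gamma)$), shows via Lemma \ref{lemma0'} that the discarded region costs only $O(\sqrt{(\ln\gamma)/n})$, then sandwiches the restricted moment between explicit Gaussian integrals $\J^n_{m,\gamma,i}$ and $\R^n_m$ whose limits are computed (via Lemma 3.4 of \cite{bx}) to produce both the constant and the factor $(2m-1)!!$. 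Without some version of this scale-separation-and-comparison machinery, asserting that ``non-paired configurations contribute $o(n^m)$'' is a restatement of the theorem rather than a proof of it.

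Two smaller inaccuracies are worth noting. First, you attribute the key localization to the chaining argument of \cite{nx1}; in this paper that argument is the engine of the \emph{second-order} law (Section 4, Lemma \ref{chain}), whereas the first-order law rests instead on the restriction to mutually uncomparable increment lengths together with the local nondeterminism bound of Assumption (B) --- these are different mechanisms, and conflating them suggests the actual reduction has not been pinned down. Second, your combinatorial picture of ``$m$ well-separated pairs of the $2m$ time variables'' is only heuristically aligned with what happens: the factor $(2m-1)!!=\E[N^{2m}]$ emerges from the sum over permutations $\sigma$ pairing the $u$-family with the $v$-family through the shared frequency variables and the symmetric-difference sets $A_i^\sigma$, not from a free matching of $2m$ time points. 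Making that count rigorous is precisely where the quoted Lemma 3.4 of \cite{bx} is invoked, and it cannot be replaced by the counting remark alone.
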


In this paper, we consider the asymptotic behavior of 
\begin{align} \label{variable}
\frac{1}{h(n)}\int^{e^{nt_1}}_0\int^{e^{nt_2}}_0 f(X_u-\widetilde{X}_v)\, du\, dv
\end{align}
as $n$ tends to $+\infty$, under certain mild conditions. 

The random variables in (\ref{variable}) appear in the  study of occupation times for the Gaussian random field $X_u-\widetilde{X}_v$ and their corresponding derivatives, see \cite{XChen, ghx, nol, oss, wu_xiao} and the references therein. 
It is of interest to find a normalization function $h(n)$ with proper growing speed as $n$ tends to infinity, so that \eqref{variable} converges to a non-trivial distribution. It turns out that the choice of $h(n)$ depends on $\int_{\R^d} f(x)dx$. That is, when $\int_{\R^d} f(x)\, dx\neq 0$ which corresponds to the first-order limit law, one may choose  $h(n)=n$; when $\int_{\R^d} f(x)\, dx= 0$ which corresponds to the second-order limit law, one needs to choose a normalization function  $h(n)=\sqrt n$ with slower growing speed.

To obtain the desired limit theorems for \eqref{variable}, we make the following assumptions on the Gaussian process $X$:
\begin{itemize}
\item[(A1)]  There exist constants $ \gamma_1\geq 1, \alpha_1>0$ and nonnegative decreasing functions $\phi_{1,i}(\varepsilon)$  on $[0,1/\gamma_1]$ with $\lim\limits_{\varepsilon\to 0}\phi_{1,i}(\varepsilon)=0$,  for $i=1,2$, such that
\[
0\leq t^{2H}(\alpha_1-\phi_{1,1}(h/t))\leq \mathrm{Var}(X^1_{t+h}-X^1_h)\leq t^{2H} (\alpha_1+\phi_{1,2}(h/t))
\]
for all $h\in[0, t/\gamma_1]$.

\item[(A2)]   There exist constants $\gamma_2\geq 1$, $\alpha_2>0$ and nonnegative decreasing functions $\phi_{2,i}(\varepsilon)$ on $[0,1/\gamma_2]$ with $ \lim\limits_{\varepsilon\to 0}\phi_{2,i}(\varepsilon)=0$, for $i=1,2,$ such that
\[
0\leq h^{2H}(\alpha_2-\phi_{2,1}(h/t))\leq \mathrm{Var}(X^1_{t+h}-X^1_t)\leq h^{2H}(\alpha_2+\phi_{2,2}(h/t))
\]
for all $h\in[0,t/\gamma_2]$.

\item[(B)]   Given $m\ge 1$, there exists a positive constant $\kappa$ depending on $m$, such that for any $0=s_0<s_1 <\cdots < s_m $ and $x_i \in \mathbb{R}^d$, $1\le i \le m$, we have
\[
\mathrm{Var} \Big( \sum_{i=1}^m x_i  \cdot (X_{s_i} -X_{s_{i-1}}) \Big) \ge \kappa \sum_{i=1}^m |x_i|^2 (s_i-s_{i-1})^{2H}.
\]

\item[(C1)] For any $0<t_1<t_2<t_3<t_4<\infty$ and $\gamma>1$, there exists a nonnegative decreasing function $\beta_1(\gamma)$ with $\lim\limits_{\gamma\to\infty}\beta_1(\gamma)=0$ such that,
if  $\frac{\Delta t_2}{\Delta t_4}\leq \frac{1}{\gamma}$ or $\frac{\Delta t_2}{\Delta t_4} \geq \gamma$, then 
 \[
\big|\E\big(X^1_{t_4}-X^1_{t_3}\big)\big(X^1_{t_2}-X^1_{t_1}\big)\big|\leq  \beta_1(\gamma)\, \left[\E\big(X^1_{t_4}-X^1_{t_3}\big)^2 \right]^{\frac{1}{2}}\left[\E\big(X^1_{t_2}-X^1_{t_1}\big)^2 \right]^{\frac{1}{2}},
\] 
where $\Delta t_i=t_i-t_{i-1}$ for $i=2,3,4$.

\item[(C2)] For any $0<t_1<t_2<t_3<t_4<\infty$ and $\gamma>1$, there exists a nonnegative decreasing function $\beta_2(\gamma)$ with $\lim\limits_{\gamma\to\infty}\beta_2(\gamma)=0$ such that, if $\frac{\Delta t_2}{\Delta t_3}\leq \frac{1}{\gamma}$ and $\frac{\Delta t_4}{\Delta t_3} \leq \frac{1}{\gamma}$, then
 \[
\big|\E\big(X^1_{t_4}-X^1_{t_3}\big)\big(X^1_{t_2}-X^1_{t_1}\big)\big|\leq \beta_2(\gamma)\, \left[\E\big(X^1_{t_4}-X^1_{t_3}\big)^2 \right]^{\frac{1}{2}}\left[\E\big(X^1_{t_2}-X^1_{t_1}\big)^2 \right]^{\frac{1}{2}}.
\] 
\end{itemize}

\begin{remark}
Note that the stationary increment property was used to obtain the limit laws for functionals of  fBm or fBms in the previous literatures \cite{bx, nx1, nx2, xu}.  In this work, we do not require the stationary increment property, but instead assume some weaker conditions  (A1) and (A2).  Assumption (B) characterizes the nondeterminism property of $X$, and it is satisfied if, for instance, $X$ is self-similar and has the local nondeterminism property. Assumption (C1) is required in Theorem \ref{thm1} (first-order limit law) and Theorem \ref{thm2} (second-order limit law), while Assumption (C2) is only needed in Theorem \ref{thm2}.
\end{remark}

The following are the main results of this paper.
\begin{theorem} \label{thm1} Under the assumptions $(A1)$, $(A2)$, $(B)$ and $(C1)$, we further suppose that $Hd=2$ and $f$ is a real-valued bounded measurable function on $\R^d$ with $\int_{\R^d}|f(x)||x|^{\beta}\, dx<\infty$ for some $\beta>0$. Then, for any $t_1$ and $t_2\geq 0$,
\begin{align*} 
\frac{1}{n} \int^{e^{nt_1}}_0\int^{e^{nt_2}}_0 f(X_u-\widetilde{X}_v)\, du\, dv\overset{\mathcal{L}}{\longrightarrow} C_{f,d}\, (t_1\wedge t_2)\, Z_{(\frac{\alpha_2}{\alpha_1})^{\frac{d}{4}}}\, \widetilde{Z}_{(\frac{\alpha_2}{\alpha_1})^{\frac{d}{4}}}\, N^2
 \end{align*}
as $n$ tends to infinity, where 
\begin{align*}
 C_{f,d}=\frac{d}{4}\, B(\frac{d}{4},\frac{d}{4})\, \frac{1}{(2\pi\alpha_2)^{\frac{d}{2}}}\int_{\R^d} f(x)\,dx
\end{align*}
with $B(\cdot,\cdot)$ being the Beta function, $Z_{\lambda}$ is a positive random variable with parameter $\lambda>0$ and $\E[Z^m_{\lambda}]=\frac{\Gamma(m+\lambda)}{m!\Gamma(\lambda)}$ for all $m\in\N$,  $\widetilde{Z}_{\lambda}$ is an independent copy of $Z_{\lambda}$, $N$ is a real-valued standard normal random variable independent of  $Z_{(\frac{\alpha_2}{\alpha_1})^{\frac{d}{4}}}$ and $\widetilde{Z}_{(\frac{\alpha_2}{\alpha_1})^{\frac{d}{4}}}$.
\end{theorem}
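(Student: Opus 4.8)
The plan is to use the method of moments. I will show that for every integer $m\ge1$ the $m$-th moment of $F_n:=\frac1n\int_0^{e^{nt_1}}\int_0^{e^{nt_2}}f(X_u-\widetilde X_v)\,du\,dv$ converges to that of $L:=C_{f,d}\,(t_1\wedge t_2)\,Z_\lambda\,\widetilde Z_\lambda\,N^2$ with $\lambda=(\alpha_2/\alpha_1)^{d/4}$, and that $L$ is determined by its moments (a Carleman-type bound, valid since the Gaussian and Gamma factors have at most factorial moment growth), whence convergence in law follows. Since $Z_\lambda,\widetilde Z_\lambda,N$ are independent, the target is $\E[L^m]=\big(C_{f,d}(t_1\wedge t_2)\big)^m\,\E[Z_\lambda^m]\,\E[\widetilde Z_\lambda^m]\,(2m-1)!!$, so the whole task is to extract, from a single asymptotic analysis of $\E[F_n^m]$, the constant $(C_{f,d})^m$, the power $(t_1\wedge t_2)^m$, the double factorial $(2m-1)!!=\E[N^{2m}]$, and the two Gamma factors $\E[Z_\lambda^m]=\Gamma(m+\lambda)/(m!\,\Gamma(\lambda))$.

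The first step is Fourier inversion. Writing $f(x)=(2\pi)^{-d}\int_{\R^d}\widehat f(\xi)e^{i\xi\cdot x}\,d\xi$ and using the independence of $X$ and $\widetilde X$ together with the Gaussianity of each, I obtain
\[
\E[F_n^m]=\frac{1}{n^m(2\pi)^{dm}}\int_{T_n}\int_{\R^{dm}}\prod_{j=1}^m\widehat f(\xi_j)\,\exp\Big(-\tfrac12\sum_{j,k=1}^m(\xi_j\cdot\xi_k)\big[R(u_j,u_k)+R(v_j,v_k)\big]\Big)\,d\xi\,du\,dv,
\]
where $R(s,t)=\E[X^1_sX^1_t]$ and $T_n=[0,e^{nt_1}]^m\times[0,e^{nt_2}]^m$. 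Next I replace each $\widehat f(\xi_j)$ by $\widehat f(0)=\int_{\R^d}f$; the error is controlled by the H\"older modulus of $\widehat f$, which is finite thanks to $\int|f(x)||x|^\beta\,dx<\infty$, integrated against the Gaussian density, and will be seen to contribute at lower order in $n$. After integrating out the $\xi$-variables (a Gaussian integral, one per coordinate) the problem reduces to the deterministic asymptotics
\[
\E[F_n^m]\sim\frac{(\int_{\R^d}f)^m}{n^m(2\pi)^{dm/2}}\int_{T_n}\frac{du\,dv}{\big(\det\Sigma(u,v)\big)^{d/2}},\qquad \Sigma(u,v)_{jk}=R(u_j,u_k)+R(v_j,v_k).
\]

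The core of the argument is the evaluation of this time integral, and this is where I expect the main obstacle. I first symmetrize over the orderings of $(u_1,\dots,u_m)$ and of $(v_1,\dots,v_m)$ and pass to logarithmic variables $u_j=e^{nx_j}$, $v_j=e^{ny_j}$; since $Hd=2$, both the Jacobian $\prod_j u_jv_j$ and $(\det\Sigma)^{d/2}$ are homogeneous of degree $2m$, so the integrand becomes scale invariant and only a logarithmic, hence $O(n^m)$, divergence survives, which the prefactor $n^{-m}$ renders finite. To identify the surviving constant I use (B) and (C1): local nondeterminism provides the lower bound keeping $\det\Sigma$ nondegenerate and, for well-separated scales, the decorrelation $R(s,t)\approx\Var(X^1_{s\wedge t})$, which turns each one-process covariance into a min-type matrix whose determinant factorizes over consecutive increments. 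The genuinely new difficulty, handled by the pairing technique, is that $\Sigma=\Sigma^X+\Sigma^{\widetilde X}$ carries two a priori different interleaved scale-orderings, one from the $u_j$ and one from the $v_j$; I will show that the leading contribution comes precisely from the configurations in which the $2m$ time points fall into mutually well-separated scales that match in pairs, the off-diagonal (non-matched, non-separated) configurations being negligible by (C1). I anticipate that the admissible pairings of the $2m$ points produce the double factorial $(2m-1)!!$, that the region constraint forcing both an $x$- and a $y$-scale to be active confines the scale locations to an interval of logarithmic length $n(t_1\wedge t_2)$ and yields $(t_1\wedge t_2)^m$ after the chaining argument decouples the $m$ scales into an approximate product, and that the radial integration at each matched scale reproduces the factor $\tfrac{d}{4}B(\tfrac{d}{4},\tfrac{d}{4})$ via the substitution that turned the $m=1$ integral into a Beta function.

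The final and subtlest point is why the nonstationarity splits the constant into $C_{f,d}$, built from the increment coefficient $\alpha_2$, times the two Gamma factors. This is exactly the place where (A1) and (A2) enter separately: an increment measured from the origin carries variance $\alpha_1 s^{2H}$, whereas a short increment between comparable consecutive times carries variance $\alpha_2 h^{2H}$, so the scale-by-scale determinant factorization sees $\alpha_1$ or $\alpha_2$ according to how many matched scales have already appeared. Keeping track of the resulting powers of the ratio $\alpha_2/\alpha_1$ across scales is what replaces the single constant of the fractional-Brownian case, where $\alpha_1=\alpha_2$ and $Z_\lambda\equiv1$ recovers Theorem~\ref{thm0}, by the product $\E[Z_\lambda^m]\E[\widetilde Z_\lambda^m]$ with $\lambda=(\alpha_2/\alpha_1)^{d/4}$. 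A consistency check at $m=1$, where $\E[Z_\lambda]=\lambda$, $\E[N^2]=1$ and the direct computation with the origin variance $\alpha_1$ gives $C_{f,d}\lambda^2=\tfrac{d}{4}B(\tfrac{d}{4},\tfrac{d}{4})(2\pi\alpha_1)^{-d/2}\int f$, already exhibits this conversion. The entire scheme thus hinges on the determinant asymptotics of $\Sigma^X+\Sigma^{\widetilde X}$ under two independent interleaved orderings, and controlling both the negligible configurations via (C1) and the $\alpha_1$-versus-$\alpha_2$ bookkeeping is where essentially all the work lies.
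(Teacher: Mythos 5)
Your proposal is correct and follows essentially the same route as the paper: method of moments, Fourier inversion with $\widehat f(\xi)$ replaced by $\widehat f(0)$ at the cost of an $O(n^{-\lambda})$ error, reduction to configurations with pairwise incomparable increment scales via (C1), the Beta-function and $(2m-1)!!\,(t_1\wedge t_2)^m$ factors from the scale-invariant radial integration (the paper imports this as Lemma 3.4 of \cite{bx}), and the two Gamma factors from summing $(\alpha_2/\alpha_1)^{d/4}$-weights over the orderings of increment magnitudes for $X$ and $\widetilde X$ separately, exactly the identity $\sum_{\sigma}A^{m-|\sigma|}=\prod_{i=1}^m(A+i-1)$ of Lemma \ref{lemma4}. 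The only cosmetic difference is that you integrate out the Fourier variables to work with $(\det\Sigma)^{-d/2}$, whereas the paper keeps the Fourier integrand and applies Assumption (B) to the variance after the telescoping change of variables $y_i=\sum_{j\ge i}x_j$; the two are equivalent, and your $m=1$ consistency check correctly pins down the $\alpha_1$-versus-$\alpha_2$ conversion.
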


In this paper, the Fourier transform is given by, when $f\in L^1(\R^d)$,
\[
\wh f(\xi)=\int_{\R^d} f(x) e^{\iota\xi\cdot x}dx,
\] 
where $\iota=\sqrt{-1}$. 
\begin{theorem}  \label{thm2} Under the assumptions in Theorem \ref{thm1}, we further assume that  $\int_{\R^d} f(x)\, dx=0$ and $(C2)$. Then, for any $t_1$ and $t_2\geq 0$,
\begin{align} \label{thmrv}
\frac{1}{\sqrt{n}} \int^{e^{nt_1}}_0\int^{e^{nt_2}}_0 f(X_u-\widetilde{X}_v)\, du\, dv\overset{\mathcal{L}}{\longrightarrow} \, \sqrt{ D_{f,d}\, (t_1\wedge t_2)\, Z_{(\frac{\alpha_2}{\alpha_1})^{\frac{d}{4}}}\, \widetilde{Z}_{(\frac{\alpha_2}{\alpha_1})^{\frac{d}{4}}} N^2}\, \eta
 \end{align}
as $n$ tends to infinity, where 
\begin{align*}
 D_{f,d}=\frac{dB(\frac{d}{4},\frac{d}{4}) \Gamma^2(\frac{d+4}{4})}{\pi^{\frac{d}{2}}} \left(\frac{1}{(2\pi\alpha_2)^d}\int_{\R^d} |\widehat{f}(x)|^2|x|^{-d}\, dx\right)
 \end{align*} 
with $\Gamma(\cdot)$ being the Gamma function, and $\eta$ is another real-valued standard normal random variable independent of $N$, $Z_{(\frac{\alpha_2}{\alpha_1})^{\frac{d}{4}}}$ and $\widetilde{Z}_{(\frac{\alpha_2}{\alpha_1})^{\frac{d}{4}}}$.
\end{theorem}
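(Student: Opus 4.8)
The plan is to apply the method of moments. Write $F_n=\frac{1}{\sqrt n}\int_0^{e^{nt_1}}\int_0^{e^{nt_2}}f(X_u-\wt X_v)\,du\,dv$ and denote the candidate limit by $\Theta=\sqrt{D_{f,d}\,(t_1\wedge t_2)\,Z_\lambda\,\wt Z_\lambda\,N^2}\,\eta$, where $\lambda=(\alpha_2/\alpha_1)^{d/4}$. First I would reduce the theorem to the two moment statements $\E[F_n^{2m+1}]\to 0$ and $\E[F_n^{2m}]\to\E[\Theta^{2m}]$ for every $m\ge1$. Since $\eta$ is an independent standard normal and $Z_\lambda,\wt Z_\lambda,N,\eta$ are mutually independent, the odd moments of $\Theta$ vanish and
\[
\E[\Theta^{2m}]=\big(D_{f,d}(t_1\wedge t_2)\big)^m\,\E[Z_\lambda^m]\,\E[\wt Z_\lambda^m]\,\E[N^{2m}]\,\E[\eta^{2m}]=\big(D_{f,d}(t_1\wedge t_2)\big)^m\Big(\frac{\Gamma(m+\lambda)}{m!\,\Gamma(\lambda)}\Big)^2\big((2m-1)!!\big)^2 .
\]
Because $\big(\E[\Theta^{2m}]\big)^{1/2m}$ grows only linearly in $m$, Carleman's condition holds, so $\Theta$ is determined by its moments and convergence of all moments will yield the asserted convergence in law.

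Next I would obtain an integral representation of the moments by Fourier inversion. Using the convention $f(x)=(2\pi)^{-d}\int_{\R^d}\wh f(\xi)e^{-\iota\xi\cdot x}\,d\xi$, the independence of $X$ and $\wt X$, and the componentwise centered Gaussian structure, the $p$-th moment takes the form
\[
\E[F_n^{p}]=\frac{1}{n^{p/2}(2\pi)^{dp}}\int_{(\R^d)^{p}}\Big(\prod_{j=1}^{p}\wh f(\xi_j)\Big)\,\Phi_n^{X}(\vec\xi)\,\Phi_n^{\wt X}(\vec\xi)\,d\vec\xi,
\]
where
\[
\Phi_n^{X}(\vec\xi)=\int_{[0,e^{nt_1}]^{p}}\exp\Big(-\tfrac12\,\Var\big(\sum_{j}\xi_j\cdot X_{u_j}\big)\Big)\,d\vec u,
\]
and $\Phi_n^{\wt X}$ is the same expression with the horizon $t_1$ replaced by $t_2$ (recall $\wt X$ has the law of $X$).

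The core is the asymptotic evaluation of this time-frequency integral. For each process I would order the time points, pass to increments, and invoke the nondeterminism property (Assumption (B)) to replace the Gaussian characteristic function by a nondegenerate Gaussian weight whose covariance scales are pinned down by (A1) and (A2). Rescaling to logarithmic time, $u_j=e^{na_j}$ and $v_j=e^{nb_j}$, converts the $1/\sqrt n$ normalization and the exponential horizons into integrals over the simplices $\{0\le a_j\le t_1\}$ and $\{0\le b_j\le t_2\}$, and the dominant region is the one where the two configurations synchronize, $a_j\approx b_j\in[0,t_1\wedge t_2]$, which produces the factor $(t_1\wedge t_2)$. Because $\wh f(0)=0$ (this is exactly $\int_{\R^d}f\,dx=0$), the first-order diagonal term that produced the limit of Theorem~\ref{thm1} now cancels, and the surviving leading contribution comes from pairing the $2m$ frequencies into $m$ nearly-opposite pairs $\xi_j\approx-\xi_{j'}$; each pair collapses, after using $Hd=2$, to a factor proportional to $\int_{\R^d}|\wh f(\xi)|^2|\xi|^{-d}\,d\xi$, which is what assembles the constant $D_{f,d}$. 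The remaining structure mirrors Theorem~\ref{thm1}: the synchronized time environment reproduces the moments $\E[Z_\lambda^m]\,\E[\wt Z_\lambda^m]\,\E[N^{2m}]$, with the ratio $\alpha_2/\alpha_1$ entering through $\lambda=(\alpha_2/\alpha_1)^{d/4}$, while the combinatorics of the admissible pairings contribute the extra $(2m-1)!!$ that is the signature of the conditionally Gaussian fluctuation $\eta$; odd moments vanish because an odd number of frequencies cannot be perfectly paired.

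The hard part will be proving that every configuration other than the fully-paired, synchronized diagonal is negligible after dividing by $n^{m}$. This is precisely where the chaining argument of \cite{nx1} together with Assumptions (C1) and (C2) are indispensable: (C1) decouples two increments whose lengths differ by a large ratio, while (C2) controls the nested regime in which three or more time points cluster, and together they force unpaired or cross-paired frequency blocks to lose an additional power of $n$ and drop out in the limit. The most delicate bookkeeping is the exact extraction of $D_{f,d}$ and of the two $Z_\lambda$-moments, for which one must track how the scale $\alpha_1$ of $\Var(X^1_t)$ in (A1) and the scale $\alpha_2$ of the increment variance in (A2) combine into $\lambda=(\alpha_2/\alpha_1)^{d/4}$; when $X$ is a fractional Brownian motion one has $\alpha_1=\alpha_2$, so $\lambda=1$, $Z_\lambda\equiv1$, and the result reduces to the stationary-increment second-order limit law.
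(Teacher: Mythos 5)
Your overall strategy---method of moments with Carleman's condition, Fourier inversion of $\E[F_n^p]$, Assumption (B) to dominate the Gaussian characteristic functional, a chaining argument to pair frequencies, and the emergence of $\int|\widehat f(\xi)|^2|\xi|^{-d}d\xi$ from collapsed pairs---is exactly the route the paper takes, and your moment formula for the target law $\Theta$ is correct. But what you have written is a plan in which every non-routine step is named and then deferred, and the deferred steps are precisely where the proof lives. Three concrete gaps. First, the two-scale structure of the time domain is absent: the paper's argument only works after restricting to configurations where the odd-indexed increments $\Delta u_{2i-1}$ live at large scales (in $(n^2, e^{nt}/m)$) and the even-indexed increments $\Delta u_{2i}$ at moderate scales (in $(n^{-1},n)$), with consecutive times separated by at least $n^{-m}$ so that the Fourier integral is controllable. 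The short increments carry the $|\widehat f(y_{2j})|^2|y_{2j}|^{-d}$ factors building $D_{f,d}$, while the long increments carry the logarithmic volume $\sim nt$ per pair that cancels $n^{-m/2}$ and, via (A1)--(A2) and the counting identity $\sum_{\sigma}A^{m-|\sigma|}=\prod_{i=1}^m(A+i-1)$, produces the two $\Gamma$-ratio factors $\E[Z_\lambda^{m/2}]\E[\widetilde Z_\lambda^{m/2}]$. Without isolating these two scales, "the dominant region is the synchronized diagonal" is not a provable statement; note also that the factor $t_1\wedge t_2$ is obtained by a preliminary $L^1$ reduction, not from synchronization inside the moment integral.

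Second, the chaining step must be quantitative: replacing $\prod_i\widehat f(y_i-y_{i+1})$ by $\prod_j|\widehat f(y_{2j})|^2$ leaves $m$ error terms, and one has to show each is $O(n^{-\lambda})$ by exploiting $|\widehat f(y)|\le c_\alpha|y|^\alpha$ (valid because $\widehat f(0)=0$) with $\alpha$ tuned to the power counting in $n$; you assert the off-diagonal terms "lose an additional power of $n$" but give no mechanism. Third, and most importantly, your sketch does not engage with the second process's permutation sum, which is the paper's main new difficulty. After symmetrization one must classify the permutations $\sigma$ of $\{1,\dots,m\}$ according to whether they map the time-consecutive pairs $\{2k-1,2k\}$ of the first process onto such pairs of the second (the set $\mathscr P_1$, of cardinality $2^{m/2}(m/2)!$, which yields the $((m-1)!!)^2$ and hence the conditionally Gaussian $\eta$), and show that all other permutations contribute nothing because they force two even-indexed frequencies to satisfy $|y_{2k}\pm y_{2\ell}|\le 4m\varepsilon$, a constraint of vanishing measure as $\varepsilon\to0$. "Pairing the frequencies into nearly-opposite pairs" does not by itself control this sum; the compatibility of the two pairings induced by the two independent processes is the crux, and it is also where (C2) is actually used (to decouple increments on short, well-separated intervals so that the variance factorizes into the product form needed for the limit). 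As written, the proposal would not close without supplying all three of these arguments.
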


As a byproduct, using similar arguments as in the proofs of Theorems \ref{thm1} and \ref{thm2}, we can easily obtain the following results.
\begin{theorem} \label{thm3} Under the assumptions $(A1)$, $(A2)$, $(B)$ and $(C1)$, we further suppose that $Hd=1$ and $f$ is a real-valued bounded measurable function on $\R^d$ with $\int_{\R^d}|f(x)||x|^{\beta}\, dx<\infty$ for some $\beta>0$. Then, for any $t\geq 0$,
\begin{align*}
\frac{1}{n} \int^{e^{nt}}_0f(X_u)\, du\overset{\mathcal{L}}{\longrightarrow}  \Big(\frac{1}{(2\pi\alpha_2)^{\frac{d}{2}}}\int_{\R^d} f(x)\, dx\Big)\, Z_{(\frac{\alpha_2}{\alpha_1})^{\frac{d}{2}}}\,  Z(t)
 \end{align*}
as $n$ tends to infinity, where $Z_{\lambda}$ is a positive random variable with parameter $\lambda>0$ and $\E[Z^m_{\lambda}]=\frac{\Gamma(m+\lambda)}{m!\Gamma(\lambda)}$ for all $m\in\N$,  $Z(t)$ is defined  in \cite{xu} and is independent of  $Z_{(\frac{\alpha_2}{\alpha_1})^{\frac{d}{2}}}$.
\end{theorem}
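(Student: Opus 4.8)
The plan is to establish the convergence by the method of moments. Writing $\lambda=(\alpha_2/\alpha_1)^{d/2}$, I would show that for every integer $m\ge 1$,
\[
\E\left[\left(\frac{1}{n}\int_0^{e^{nt}} f(X_u)\,du\right)^m\right]\longrightarrow \left(\frac{1}{(2\pi\alpha_2)^{d/2}}\int_{\R^d} f(x)\,dx\right)^m\frac{\Gamma(m+\lambda)}{m!\,\Gamma(\lambda)}\,\E\big[Z(t)^m\big],
\]
and that this limiting moment sequence grows slowly enough (Carleman's condition) that moment convergence upgrades to convergence in law. Since $Z_\lambda$ and $Z(t)$ are independent with $\E[Z_\lambda^m]=\Gamma(m+\lambda)/(m!\,\Gamma(\lambda))$, the right-hand side is precisely the $m$-th moment of the claimed limit.

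For the moment computation I would first symmetrize, writing the $m$-fold integral as $m!$ times the integral over the simplex $0<u_1<\cdots<u_m<e^{nt}$, and then apply Fourier inversion to each factor $f(X_{u_j})$ using $f(x)=(2\pi)^{-d}\int_{\R^d}\widehat f(\xi)e^{-\iota\xi\cdot x}\,d\xi$. Taking expectation and using that $X$ is Gaussian, the product of exponentials becomes $\exp\big(-\tfrac12\Var(\sum_j\xi_j\cdot X_{u_j})\big)$. An Abel summation rewrites $\sum_j\xi_j\cdot X_{u_j}=\sum_j v_j\cdot(X_{u_j}-X_{u_{j-1}})$ with $v_j=\sum_{k\ge j}\xi_k$ and $u_0=0$, so the quadratic form is expressed through the increments of $X$ over consecutive intervals. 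The problem is thereby reduced to the asymptotic evaluation, under the logarithmic scaling $u_j=e^{n\tau_j}$, of a joint time-and-frequency integral.

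The core of the argument is this asymptotic evaluation. The chaining argument of \cite{nx1}, fed by Assumption (C1), shows that the cross-covariances between increments living on very different time scales are negligible, so the quadratic form decouples, up to a vanishing error, into a sum of diagonal pieces $\Var(X_{u_j}-X_{u_{j-1}})$. Assumption (A2) identifies each short diagonal piece as $\alpha_2(u_j-u_{j-1})^{2H}(1+o(1))$, while the global normalisation $\Var(X_{u})=\alpha_1 u^{2H}$ (given in the introduction, i.e.\ Assumption (A1) with base point $0$) governs the origin-anchored pieces; Assumption (B) supplies the uniform nondeterminism lower bound needed to dominate the $\xi$-integrand and justify passage to the limit. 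Performing the Gaussian $\xi$-integration at criticality ($Hd=1$) produces a factor $\int_{\R^d}f$ together with the normalisation $(2\pi\alpha_2)^{-d/2}$ for each cluster, and the remaining time integral, after the pairing and clustering bookkeeping, reassembles into the product of a Beta-type combinatorial weight and the moment $\E[Z(t)^m]$ already computed in \cite{xu} for the stationary-increment case.

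The main obstacle will be to isolate the extra randomness and pin down its moments exactly. The subtlety is that the local scaling constant $\alpha_2$ (from (A2)) and the global constant $\alpha_1$ differ once the increments of $X$ are nonstationary; tracking how their ratio $(\alpha_2/\alpha_1)^{d/2}$ threads through the combinatorics of the chaining, and showing that it aggregates precisely into the moment sequence $\Gamma(m+\lambda)/(m!\,\Gamma(\lambda))$ of an independent factor $Z_\lambda$, is where the genuine work lies. For fractional Brownian motion one has $\alpha_1=\alpha_2$, hence $\lambda=1$ and $Z_1\equiv 1$, recovering the result of \cite{xu}; the present statement quantifies the deviation from stationarity. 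The remaining technical points — uniform integrability of the $\xi$-integrals via (B), negligibility of the non-leading simplex regions, and verification of Carleman's condition for the limit moments — follow the lines of the proofs of Theorems \ref{thm1} and \ref{thm2} and require only routine modifications to the single-process, $Hd=1$ setting.
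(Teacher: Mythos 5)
Your proposal is correct and follows essentially the paper's intended route: the paper offers no separate proof of Theorem \ref{thm3}, saying only that it follows by the same moment-method arguments as Theorems \ref{thm1} and \ref{thm2}, and your outline is exactly the single-process, $Hd=1$ adaptation of the Section 3 proof --- reduction to $\widehat f(0)$, restriction to increments on pairwise incomparable time scales, decoupling of the variance via Assumption (C1), the $\alpha_1$-versus-$\alpha_2$ bookkeeping through (A1)--(A2), and aggregation over permutations via Lemma \ref{lemma4} into the moments $\Gamma(m+\lambda)/(m!\,\Gamma(\lambda))$ with $\lambda=(\alpha_2/\alpha_1)^{d/2}$ (unsquared, since there is only one ordering to sum over). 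The one minor misattribution is your appeal to the chaining argument of \cite{nx1}, which the paper deploys only for the second-order laws where $\int f=0$; for the first-order law the corresponding reduction is the analogue of Lemma \ref{lemma3.2}, but this does not affect the validity of your plan.
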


\begin{theorem}  \label{thm4} Under the assumptions in Theorem \ref{thm3}, we further assume that  $\int_{\R^d} f(x)\, dx=0$ and $(C2)$. Then, for any $t\geq 0$,
\begin{align*} 
\frac{1}{\sqrt{n}} \int^{e^{nt}}_0 f(X_u)\, du\overset{\mathcal{L}}{\longrightarrow} \, \sqrt{ \overline{D}_{f,d}\, Z_{(\frac{\alpha_2}{\alpha_1})^{\frac{d}{2}}}\, Z(t)}\, \eta
 \end{align*}
as $n$ tends to infinity, where 
\begin{align*}
\overline{D}_{f,d}=\frac{d\Gamma(\frac{d}{2})}{\pi^{\frac{d}{2}}} \left(\frac{1}{(2\pi\alpha_2)^d}\int_{\R^d} |\widehat{f}(x)|^2|x|^{-d}\, dx\right)
 \end{align*} 
with $\Gamma(\cdot)$ being the Gamma function, and $\eta$ is another real-valued standard normal random variable independent of $Z(t)$ and $Z_{(\frac{\alpha_2}{\alpha_1})^{\frac{d}{2}}}$.
\end{theorem}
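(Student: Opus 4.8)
The plan is to argue by the method of moments, running the same scheme as in the proof of Theorem~\ref{thm2} but with the single process $X$ in place of the independent pair $(X,\widetilde X)$; the two-process objects $(t_1\wedge t_2)\,\widetilde Z_\lambda\,N^2$ are then replaced by the single random object $Z(t)$ of \cite{xu}. Write $F_n=\frac{1}{\sqrt n}\int_0^{e^{nt}}f(X_u)\,du$ and $\lambda=(\alpha_2/\alpha_1)^{d/2}$. First I record the moments of the target variable $R=\sqrt{\overline{D}_{f,d}\,Z_\lambda\,Z(t)}\,\eta$: since $\eta$ is a centered Gaussian independent of the nonnegative factor, $\E[R^{2m+1}]=0$ while $\E[R^{2m}]=(2m-1)!!\,\overline{D}_{f,d}^{\,m}\,\E[Z_\lambda^m]\,\E[Z(t)^m]$, using $\E[\eta^{2m}]=(2m-1)!!$ and independence. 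These moments grow slowly enough that $R$ is determined by them, so it suffices to prove $\E[F_n^p]\to\E[R^p]$ for every $p\ge 1$.

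Next I would write $\E[F_n^p]=n^{-p/2}\int_{[0,e^{nt}]^p}\E\big[\prod_{j=1}^p f(X_{u_j})\big]\,d\vec u$ and insert the Fourier inversion $f(x)=(2\pi)^{-d}\int_{\R^d}\widehat f(\xi)e^{-\iota\xi\cdot x}\,d\xi$, turning the Gaussian expectation into $\exp\big(-\tfrac12\Var(\sum_j\xi_j\cdot X_{u_j})\big)$. Restricting to the ordered simplex $u_1<\cdots<u_p$ (and multiplying by $p!$) and passing to increment variables, the decisive feature is that $\widehat f(0)=\int_{\R^d}f(x)\,dx=0$, so each factor $\widehat f(\xi_j)$ is suppressed near the origin. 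A macroscopically isolated time forces its conjugate variable toward $0$ and hence contributes a vanishing $\widehat f(0)$; thus the only configurations surviving in the limit are those in which the times group into $m$ consecutive pairs $(u_{2i-1},u_{2i})$ with small within-pair gaps and large gaps between pairs. In particular $p$ must be even, giving $\E[F_n^{2m+1}]\to 0$.

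For $p=2m$ I would split each pair into a local contribution from the short gap $s_i=u_{2i}-u_{2i-1}$ and a macroscopic one. By (A2) the increment $X_{u_{2i}}-X_{u_{2i-1}}$ has variance $\approx\alpha_2 s_i^{2H}$ and, by (B), is nearly orthogonal to the coarse data; performing the change $\xi\mapsto\zeta=\xi+\eta$ on the pair's Fourier variables, concentrating the macroscopic Gaussian so that $\widehat f(\zeta-\eta)\widehat f(\eta)\to|\widehat f(\eta)|^2$, and integrating out $s_i$ through $\int_0^\infty e^{-\frac{\alpha_2}2|\eta|^2 s^{2H}}\,ds=\frac{\Gamma(1/(2H))}{2H}\big(\tfrac{\alpha_2}2|\eta|^2\big)^{-1/(2H)}$ with $1/(2H)=d/2$ (as $Hd=1$) produces exactly the factor $\int_{\R^d}|\widehat f(\eta)|^2|\eta|^{-d}\,d\eta$ and assembles the constant $\overline{D}_{f,d}$. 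The remaining integral over the $m$ pair-locations, handled by the chaining argument of \cite{nx1} together with (C1) and (C2) to decorrelate widely separated increments, reproduces $\E[Z(t)^m]$ and the pairing factor $(2m-1)!!$, while the mismatch between the global scale $\alpha_1$ in (A1) and the local scale $\alpha_2$ in (A2) yields the extra random factor $Z_\lambda$ with $\E[Z_\lambda^m]=\Gamma(m+\lambda)/(m!\,\Gamma(\lambda))$. Matching then gives $\E[F_n^{2m}]\to\E[R^{2m}]$.

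The hard part will be the error analysis in the pairing/chaining step: one must show that every non-pairing configuration --- isolated times, clusters of three or more, and non-consecutive pairings on the ordered simplex --- is negligible after dividing by $n^m$. This demands the quantitative decorrelation supplied by (C1)--(C2), which lets one replace the joint Gaussian density of a block of increments by a product over pairs up to errors controlled by the functions $\phi_{j,i}$ and $\beta_j(\gamma)$. It is precisely the lack of stationary increments that forces $Z_\lambda$ to be genuinely random rather than the deterministic constant of the fBm case, where $\alpha_1=\alpha_2$ gives $\lambda=1$ and $Z_1\equiv 1$. A secondary point is making the concentration $\int\widehat f(\zeta-\eta)e^{-\frac{\alpha_1}2|\zeta|^2 u^{2H}}\,d\zeta\to\widehat f(-\eta)(2\pi/\alpha_1 u^{2H})^{d/2}$ uniform enough to integrate $\int_1^{e^{nt}}u^{-1}\,du\sim nt$ per pair, which supplies the $n^m$ growth that the normalization $n^{-m}$ cancels.
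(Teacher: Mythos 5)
Your proposal is correct and follows essentially the same route the paper intends: the authors give no separate proof of Theorem \ref{thm4}, stating only that it follows by the arguments of Theorems \ref{thm1} and \ref{thm2}, and your plan is precisely that adaptation — method of moments via Fourier transform, the chaining argument forcing consecutive pairings (so odd moments vanish), integration of the short within-pair gaps using (A2) and $\int_0^\infty e^{-a s^{2H}}ds = c\,a^{-d/2}$ (since $Hd=1$) to produce $\int|\widehat f(\eta)|^2|\eta|^{-d}d\eta$, the decorrelation via (C1)--(C2), and the $\alpha_1$ versus $\alpha_2$ mismatch combined with the combinatorial identity $\sum_{\sigma}A^{m-|\sigma|}=\prod_{i=1}^m(A+i-1)$ yielding $Z_\lambda$ with $\lambda=(\alpha_2/\alpha_1)^{d/2}$ (the exponent $d/2$ rather than $d/4$ because only one time-ordering is involved). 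Your identification of the target moments $\E[R^{2m}]=(2m-1)!!\,\overline D_{f,d}^{\,m}\,\E[Z_\lambda^m]\,\E[Z(t)^m]$ and of the error analysis as the genuinely technical step matches the structure of the paper's two-process argument.
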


\begin{remark}  \label{remark0} 
 For all $\lambda>0$, the distribution of $Z_{\lambda}$ is uniquely determined by its moments  $\E[Z^m_{\lambda}]=\frac{\Gamma(m+\lambda)}{m!\Gamma(\lambda)}, m\in\N$ (see, e.g., \cite{durrett}). In particular, $Z_{\lambda}$ follows the Beta($\lambda$, $1-\lambda$) distribution when $\lambda\in(0,1)$.  

 When $\alpha_1=\alpha_2$, for example in the fBm case, $\lambda=1$ and it is easy to see $Z_1=1$ a.s., and this is consistent with the known results in \cite{bx, nx1, nx2, xu}. When $\alpha_1\neq \alpha_2$, for example in the  sub-fBm or bi-fBm case, $\lambda\neq 1$ and  $Z_{\lambda}$ is a non-trivial random variable. Heuristically speaking, the loss of stationarity of increments introduces new random phenomenon in the limit laws.

\end{remark}

\begin{remark}  \label{remark1}
Since $f$ is bounded, one can always assume $\beta\leq 1$. Moreover, the assumption on $f$ also implies that $f\in L^p(\R^d)$ for any $p\geq 1$. When $\int_{\R^d} f(x)\, dx=0$,  $|\widehat{f}(\xi)|=|\widehat{f}(\xi)-\widehat{f}(0)|\leq c_{\alpha}|\xi|^{\alpha}$ for any $\alpha\in[0,\beta]$,  which yields the  finiteness of $\int_{\R^d} |\widehat{f}(\xi)|^2|\xi|^{-d}\, d\xi$ by Plancherel theorem.
\end{remark}

\begin{remark} When $X$ and $\widetilde{X}$ are independent copies of a $d$-dimensional fBm, denoting $p_\varepsilon(x)=\frac{1}{\varepsilon^d}p(\frac x\varepsilon)$ where $p(x)=(2\pi)^{-d/2}e^{-|x|^2/2}$, Theorem \ref{thm1} provides the exploding rate of
 $$I_{\varepsilon}(B^H, \widetilde{B}^{H}):= \int_0^T\int_0^T p_\varepsilon (B_u^H-\widetilde B_v^H) dudv $$  
 as $\varepsilon\to 0$ in the critical case $Hd=2$ (see, Theorem 1 in \cite{nol} and Remark 3.2 in \cite{wu_xiao}). Indeed, using the self-similarity of fBms and change of variables, one can get that $\int_0^T\int_0^T p_\varepsilon (B_u^H-\widetilde B_v^H) dudv$ has the same distribution as $  \int_0^{T\varepsilon^{-1/H}}\int_0^{T\varepsilon^{-1/H}} p (B_u^H-\widetilde B_v^H) dudv$, which  by Theorem \ref{thm1} explodes at the rate of  $ \log\varepsilon^{-1}$ as $\varepsilon$ tends to zero. 
\end{remark}

\begin{remark}\label{remark1.8} In fact, using similar arguments as in the proof of Lemma \ref{lemma0'},
\begin{align*}
 D_{f,d}
 &=\frac{4}{(2\pi)^\frac{d}{2}}\left(\frac{1}{(2\pi\alpha_2)^d}\int_{\R^d} |\widehat{f}(x)|^2|x|^{-d}\, dx\right) \left( \int^{+\infty}_0 e^{-\frac{1}{2}r^{2H}}\, dr\right)^2 \\
 & \qquad\qquad \qquad \qquad \times\lim_{n\to\infty}\frac{1}{n}\int^{e^{n}}_{1}\int^{e^{n}}_{1} (u^{2H}+v^{2H})^{-\frac{d}{2}}\, du\, dv.
\end{align*}
Moreover, comparing Theorem \ref{thm2} with Theorem 4 in \cite{biane}, one may obtain the following equality
\[
\int_{\R^4} |\widehat{f}(x)|^2 |x|^{-4}\, dx=-2\pi^2 \int_{\R^4}\int_{\R^4} f(x)f(y)\log|x-y|\, dx\, dy
\]
for all functions $f$ in $C^{\infty}_c(\R^4)$ with $\int_{\R^4} f(x)\, dx=0$.
\end{remark}

Limit theorems for functionals of two independent Brownian motions and their extensions were obtained in the 1980s, see \cite{legall86a, legall86b, biane} and references therein.  However, the corresponding results for fBms were not much since then. There are two main reasons. One is that the general fBm is neither a Markov process nor a semimartingale. This means that methods working for Bms probably fail for fBms. The other is that the role played by the second fBm in the limit laws is not well understood. Recently, Nualart and Xu in \cite{nx2} proved central limit theorems for functionals of two independent $d$-dimensional fractional Brownian motions in the case $Hd<2$. After that, Bi and Xu in \cite{bx} showed the first-order limit law in the critical case $Hd=2$ with $H\leq 1/2$, but it does not include the interesting case $d=3$ which may have physical relevance. The contribution of this paper is that, in the case $Hd=2$, for more general Gaussian processes other than just fBms, we obtain the first-order limit law and the  second-order limit law in Theorem \ref{thm1} and Theorem \ref{thm2}, respectively.  

Compared with the previous proofs of limit laws for fBms, we encounter some new challenges due to the lack of stationary increments property and short range dependence property,  both of which played critical roles in deriving limit laws for fBms with $H\le 1/2$. Moreover, the second Gaussian process  causes a big trouble when proving the convergence of even moments. Thanks to the methodologies developed in the recent papers \cite{nx2, bx, xu} and the introduction of some new ideas, especially the pairing technique, these issues are solved eventually.

To conclude the introduction, we briefly mention some of the innovations in this paper.

First of all, we do not assume the stationary increment property for our Gaussian processes. Instead, we propose two increment properties (A1) and (A2), which only concern the increment on a time interval whose length is significantly larger/smaller than the preceding interval.  So our results cover  several well-known Gaussian processes  besides fBms.   A surprising observation is that, compared with stationary increments,  non-stationary increments  cause extra random phenomena (see Remark \ref{remark0}).

Secondly, we characterize the type of increments of Gaussian processes that contribute to the moments of the limiting distribution in the case $Hd=2$. Roughly speaking, only increments on intervals with uncomparable lengths contribute in the first-order limit law. As for the second-order limit law, some increments on intervals far away also contribute.  The characterization of the increments in the first-order limit law is given in Assumption (C1), which is weaker than the one in Lemma 2.3 of \cite{xu} for fBm with Hurst index $H\leq 1/2$.   The characterization of the increments in the second-order limit law,  in addition to Assumption (C1), is given in Assumption (C2), which enables us to obtain the standard Gaussian random variable $\eta$ in Theorem \ref{thm2}.  Assumptions (C1) and (C2) are satisfied by fBms, sub-fBms and bi-fBms, see Lemmas \ref{fbm}, \ref{subfbm} and \ref{bifbm}.

Thirdly, the role played by the second Gaussian process in the second-order limit law in the case $Hd=2$ is clearly revealed.  For fBms with $Hd<2$, the role played by the second fBm  was explained in Lemma 3.2 and (3.22) of \cite{nx2}. It turns out that the second Gaussian process plays a similar role as the second fBm does in \cite{nx2}. However, noting that the method used in \cite{nx2}  cannot be applied directly here, we develop a new methodology, in which the key idea is to pair the second Gaussian process with the first one in a proper manner (see \textbf{Step 3} and \textbf{Step 4} in the proof of Proposition \ref{moments} for details). Moreover, this kind of paring technique indicates the relationship between the first-order limit law and the corresponding second-order limit law. We believe that our methodologies also work  well for a variety of functionals and multiparameter processes. For instance, one may use them to extend results in \cite{biane} to multiple independent Gaussian processes. In particular, the paring technique developed here could be used to obtain a functional version of the central limit theorem proved in \cite{nx2} and extension to more general Gaussian processes should also be available. This should be discussed in another paper.

The paper is outlined in the following way. After some preliminaries in Section 2, Section 3 is devoted to the proof of Theorem \ref{thm1} and Section 4 to the proof of Theorem \ref{thm2}, based on the method of moments, Fourier transform, the chaining argument  introduced in \cite{nx1} and a paring technique.

Throughout this paper, if not mentioned otherwise, the letter $c$, with or without a subscript, denotes a generic positive finite constant whose exact value is independent of $n$ and may change from line to line.  Moreover, we use $x\cdot y$ to denote the usual inner product in $\R^d$ and $B(0,r)$ the ball in $\R^d$ centered at the origin with radius $r$.

\bigskip

\section{Preliminaries}

Let $\left\{X_t= (X^1_t,\dots,
X^d_t), t\geq 0\right\} $ be a $d$-dimensional centered Gaussian process defined on some probability space $(\Om, \cF, P)$. The components
of $X$ are independent copies of a 1-dimensional centered Gaussian process. In this paper, we always assume that
$H=2/d\in(0,1)$ and that 
\begin{equation}\label{alpha0}
\Var(X^1_t)=\alpha_1 t^{2H},~~ \text{ for all } t\ge 0,
\end{equation}
where $\alpha_1>0$ is a constant that appears in Assumption (A1).
 This is a rather weak condition that is satisfied by a variety of Gaussian processes.  In particular, it is straightforward to validate the following Gaussian processes.

\begin{example} $X^1_t$ is a $1$-dimensional fBm, of which the covariance function is 
\[
\E(X^1_t X^1_s)=\frac{1}{2}(t^{2H}+s^{2H}-|t-s|^{2H}).\]
\end{example}
\begin{example} $X^1_t$ is a $1$-dimensional sub-fBm, of which the covariance function  is 
\[
\E(X^1_t X^1_s)=t^{2H}+s^{2H}-\frac{1}{2}[(t+s)^{2H}+|t-s|^{2H}].
\]
\end{example}
\begin{example} $X^1_t$ is a $1$-dimensional bi-fBm, of which the covariance function is 
\[
\E(X^1_t X^1_s)=2^{-K}[(t^{2H}+s^{2H})^{K}-|t-s|^{2HK}],
\]
where $H\in(0,1)$, $K\in(0,1]$ and $HK=2/d$.
\end{example}

It is easy to see that Assumptions (A1) and (A2) are satisfied by fBm with $$\alpha_1=\alpha_2=1, ~~  \phi_{1,1}(\varepsilon)=\phi_{1,2}(\varepsilon)=\phi_{2,1}(\varepsilon)=\phi_{2,2}(\varepsilon)\equiv0.$$ Using Taylor expansion, one can show that Assumptions (A1) and (A2) are satisfied by sub-fBm with 
$$\alpha_1=2-2^{2H-1},~~ \alpha_2=1, ~~\phi_{1,1}(\varepsilon)=\phi_{1,2}(\varepsilon)=c_1\,\varepsilon^{1\wedge 2H}, ~~\phi_{2,1}(\varepsilon)=\phi_{2,2}(\varepsilon)=c_2\,\varepsilon^{2-2H},$$ and by bi-fBms with $$\alpha_1=1,~~ \alpha_2=2^{1-K},~~ \phi_{1,1}(\varepsilon)=\phi_{1,2}(\varepsilon)=c_3\,\varepsilon^{1\wedge 2HK},~~ \phi_{2,1}(\varepsilon)=\phi_{2,2}(\varepsilon)=c_4\,\varepsilon^{2-2HK}.$$ 
Note that the constant $\alpha_1$  in \eqref{alpha0} coincide with $\alpha_1$ appearing in Assumption (A1).  Moreover, fBm, sub-fBm and bi-fBm satisfy Assumption (B) due to their self-similarity and local nondeterminism property, see \cite{berman, tudor, luan}.

In the sequel, we will show that Assumptions (C1) and (C2) are satisfied by fBm, sub-fBm, and bi-fBm.

\begin{lemma} \label{fbm}  Assumptions (C1) and (C2) are satisfied by fBms.
\end{lemma}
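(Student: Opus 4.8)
The plan is to reduce everything to an explicit integral representation of the covariance of two increments of fBm and then estimate it directly. Writing $a=\Delta t_2$, $b=\Delta t_3$ and $c=\Delta t_4$, the fBm covariance formula of Example~2.1 gives
\[
\E\big[(X^1_{t_4}-X^1_{t_3})(X^1_{t_2}-X^1_{t_1})\big]=\tfrac12\big[(a+b+c)^{2H}+b^{2H}-(b+c)^{2H}-(a+b)^{2H}\big].
\]
Differentiating $u\mapsto u^{2H}$ twice and integrating back, I would rewrite this as
\[
\rho:=\E\big[(X^1_{t_4}-X^1_{t_3})(X^1_{t_2}-X^1_{t_1})\big]=H(2H-1)\int_0^{a}\!\!\int_0^{c}(b+s+r)^{2H-2}\,dr\,ds,
\]
valid for $H\neq\tfrac12$ (the case $H=\tfrac12$ is trivial, since Brownian motion has independent increments, so both (C1) and (C2) hold with $\beta_i\equiv0$). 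As $2H-2<0$ the integrand is positive, whence $|\rho|=|H(2H-1)|\int_0^a\int_0^c(b+s+r)^{2H-2}\,dr\,ds$, while the normalising factor on the right-hand side of (C1) and (C2) is simply $[\E(X^1_{t_4}-X^1_{t_3})^2]^{1/2}[\E(X^1_{t_2}-X^1_{t_1})^2]^{1/2}=a^Hc^H$. Everything then reduces to comparing this double integral with $a^Hc^H$.

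For (C2) the hypotheses read $a\le b/\gamma$ and $c\le b/\gamma$, so the gap $b$ dominates both intervals. Here I would bound $(b+s+r)^{2H-2}\le b^{2H-2}$ on the whole domain, giving $|\rho|\le|H(2H-1)|\,ac\,b^{2H-2}$; dividing by $a^Hc^H$ and inserting $a,c\le b/\gamma$ yields
\[
\frac{|\rho|}{a^Hc^H}\le |H(2H-1)|\,a^{1-H}c^{1-H}b^{2H-2}\le |H(2H-1)|\,\gamma^{-2(1-H)}.
\]
Thus (C2) holds with $\beta_2(\gamma)=|H(2H-1)|\,\gamma^{-2(1-H)}$, which is nonnegative, decreasing, and tends to $0$ as $\gamma\to\infty$ because $1-H>0$.

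For (C1) there is no constraint on the gap $b$, only $a/c\le1/\gamma$ or $a/c\ge\gamma$; by the $a\leftrightarrow c$ symmetry of both $\rho$ and $a^Hc^H$ I may assume $a\le c/\gamma$. Since the integrand decreases in $b$, the bound is worst at $b=0$ (adjacent intervals), and carrying out the $r$- and $s$-integrations explicitly gives $|\rho|\le\tfrac12\big|(a+c)^{2H}-a^{2H}-c^{2H}\big|$. Setting $x=a/c\in(0,1/\gamma]$ and factoring out $c^{2H}$, this becomes $|\rho|/(a^Hc^H)\le g(x)$ with $g(x)=\tfrac12 x^{-H}\big|(1+x)^{2H}-x^{2H}-1\big|$, and I would take $\beta_1(\gamma)=\sup_{0<x\le1/\gamma}g(x)$, which is automatically nonnegative and nonincreasing in $\gamma$.

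The crux — and the only genuinely delicate point — is to show $g(x)\to0$ as $x\to0^+$, which forces $\beta_1(\gamma)\to0$. As $x\to0$ one has $(1+x)^{2H}-1\sim 2Hx$, so the numerator behaves like $|2Hx-x^{2H}|$, and the dominant term depends on the sign of $2H-1$: for $H<\tfrac12$ the term $x^{2H}$ dominates and $g(x)\sim\tfrac12 x^{H}$, whereas for $H>\tfrac12$ the linear term dominates and $g(x)\sim Hx^{1-H}$; in both regimes $g(x)\to0$. Continuity of $g$ on $(0,1/\gamma]$ together with this limit guarantees that $\beta_1(\gamma)$ is finite and vanishes as $\gamma\to\infty$, completing (C1). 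I expect this case-dichotomy in $H$ to be the main obstacle, since the same expression must be controlled by two different power laws according to whether the increments of the fBm are positively or negatively correlated.
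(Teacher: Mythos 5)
Your argument is correct, and it takes a genuinely different route from the paper. The paper splits into two cases: for $H\le 1/2$ it simply cites Lemma 2.3 of \cite{xu}, and for $H>1/2$ it applies the mean value theorem (twice, for (C2)) to the four-term expression $(a+b+c)^{2H}+b^{2H}-(b+c)^{2H}-(a+b)^{2H}$, obtaining the explicit rates $4\gamma^{-(1-H)}$ for (C1) and $2\gamma^{-(2-2H)}$ for (C2). Your integral representation $\rho=H(2H-1)\int_0^a\int_0^c(b+s+r)^{2H-2}\,dr\,ds$ packages both mean-value steps into a single identity valid for all $H\neq 1/2$, which buys you a unified, self-contained treatment: the case $H\le 1/2$ no longer needs to be outsourced to an external lemma, the sign and monotonicity (in $b$) of the covariance become transparent, and (C2) follows from the one-line pointwise bound $(b+s+r)^{2H-2}\le b^{2H-2}$ with the same rate $\gamma^{-2(1-H)}$ as the paper. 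For (C1), your reduction to the worst case $b=0$ and the analysis of $g(x)=\tfrac12 x^{-H}\lvert(1+x)^{2H}-x^{2H}-1\rvert$ is sound, including the dichotomy $g(x)\sim\tfrac12 x^{H}$ for $H<1/2$ versus $g(x)\sim Hx^{1-H}$ for $H>1/2$; these asymptotics are consistent with the rates in the paper and in \cite{xu}. The only cost of your approach is that $\beta_1(\gamma)=\sup_{0<x\le 1/\gamma}g(x)$ is slightly less explicit than the paper's closed-form constant, but it manifestly satisfies all the requirements of Assumption (C1), so nothing is lost.
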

\begin{proof}  Let $B^H$ be a 1-dimensional fBm with Hurst index $H$. If $H\leq 1/2$, then Assumptions (C1) and (C2) follow from Lemma 2.3 in \cite{xu}. If $H>1/2$, then
\begin{align*}
\big|\E\big(B^{H}_{t_4}-B^{H}_{t_3}\big)\big(B^{H}_{t_2}-B^{H}_{t_1}\big)\big|
&=(\Delta t_4+\Delta t_3+\Delta t_2)^{2H}+(\Delta t_3)^{2H}-(\Delta t_4+\Delta t_3)^{2H}-(\Delta t_3+\Delta t_2)^{2H}.
\end{align*}
Without loss of generality, we can assume that $\Delta t_2=\theta_2 \Delta t_4$ with $0\leq\theta_2\leq \frac{1}{\gamma}<1$ and $\Delta t_3=\theta_3 \Delta t_4$ with $\theta_3\geq 0$.
Then, by $H>1/2$ and mean value theorem with $a,b\in(0,1)$,
\begin{align*}
\big|\E\big(B^{H}_{t_4}-B^{H}_{t_3}\big)\big(B^{H}_{t_2}-B^{H}_{t_1}\big)\big|
&=(\Delta t_4)^{2H}[(1+\theta_2+\theta_3)^{2H}+(\theta_3)^{2H}-(1+\theta_3)^{2H}-(\theta_2+\theta_3)^{2H}]\\
&=2H (\Delta t_4)^{2H}\theta_2 [(1+\theta_3+a\theta_2)^{2H-1}-(\theta_3+b\theta_2)^{2H-1}]\\
&\leq 2H (\Delta t_4)^{2H}\theta_2|1+(a-b)\theta_2|^{2H-1}\\
&\leq \frac{4}{\gamma^{1-H}} (\Delta t_2)^{H}(\Delta t_4)^{H}\\
&=\frac{4}{\gamma^{1-H}}\left[\E\big(B^H_{t_4}-B^H_{t_3}\big)^2\right]^{\frac{1}{2}}\left[\E\big(B^H_{t_2}-B^H_{t_1}\big)^2 \right]^{\frac{1}{2}}.
\end{align*}
where in the first inequality we use the inequality $|y^{\alpha}-x^{\alpha}|\leq |y-x|^{\alpha}$ with $\alpha\in(0,1]$.

This gives the desired inequality in Assumption (C1). We next show that Assumption (C2) is also satisfied by fBms when $H>1/2$. By mean value theorem, 
\begin{align*}
\big|\E\big(B^{H}_{t_4}-B^{H}_{t_3}\big)\big(B^{H}_{t_2}-B^{H}_{t_1}\big)\big|
&=|(\Delta t_4+\Delta t_3+\Delta t_2)^{2H}+(\Delta t_3)^{2H}-(\Delta t_4+\Delta t_3)^{2H}-(\Delta t_3+\Delta t_2)^{2H}|\\ 
&=H(2H-1)\Delta t_2(\Delta t_3+\Delta)^{2H-2}|\Delta t_4+c_1\Delta t_2-c_2\Delta t_2|\\
&\leq \Delta t_2(\Delta t_3+\Delta)^{2H-2}(\Delta t_2+\Delta t_4)\\
&\leq (\Delta t_3)^{2H-2} [(\Delta t_2)^{2}+\Delta t_2\Delta t_4],
\end{align*}
where $c_1, c_2\in (0,1)$ and $\Delta$ is a proper constant between $\Delta t_4+c_1\Delta t_2$ and $c_2\Delta t_2$. 

Similarly, 
\begin{align*}
\big|\E\big(B^{H}_{t_4}-B^{H}_{t_3}\big)\big(B^{H}_{t_2}-B^{H}_{t_1}\big)\big|
&\leq (\Delta t_3)^{2H-2} [(\Delta t_4)^{2}+\Delta t_2\Delta t_4].
\end{align*}

Since $\frac{\Delta t_2}{\Delta t_3}\leq \frac{1}{\gamma}$ and $\frac{\Delta t_4}{\Delta t_3} \leq \frac{1}{\gamma}$,
\begin{align*}
\big|\E\big(B^{H}_{t_4}-B^{H}_{t_3}\big)\big(B^{H}_{t_2}-B^{H}_{t_1}\big)\big|
&\leq (\Delta t_3)^{2H-2} [(\Delta t_2)^{2}\wedge (\Delta t_4)^{2} +\Delta t_2\Delta t_4]\\
&\leq 2(\Delta t_3)^{2H-2}\Delta t_2\Delta t_4\\
&\leq \frac{2}{\gamma^{2-2H}} (\Delta t_2)^H(\Delta t_4)^H\\
&=\frac{2}{\gamma^{2-2H}} \left[\E(B^{H}_{t_4}-B^{H}_{t_3})^2\right]^{\frac{1}{2}}\left[\E(B^{H}_{t_2}-B^{H}_{t_1})^2\right]^{\frac{1}{2}}.
\end{align*}
This completes the proof.
\end{proof}

\begin{lemma} \label{subfbm}  Assumptions (C1) and (C2) are satisfied by sub-fBms.
\end{lemma}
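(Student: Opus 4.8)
The plan is to reduce the sub-fBm case to the fBm case already settled in Lemma \ref{fbm}, by isolating the extra part of the increment covariance that comes from the non-stationarity. Writing the sub-fBm covariance as $R(t,s)=\E(X^1_tX^1_s)=t^{2H}+s^{2H}-\tfrac12(t+s)^{2H}-\tfrac12|t-s|^{2H}$ and expanding the second difference
\[
\E\big[(X^1_{t_4}-X^1_{t_3})(X^1_{t_2}-X^1_{t_1})\big]=R(t_4,t_2)-R(t_4,t_1)-R(t_3,t_2)+R(t_3,t_1),
\]
the pure power terms $t_i^{2H}$ all cancel, and the remainder splits into the $|t-s|^{2H}$ piece, which is exactly the fBm increment covariance $\E[(B^H_{t_4}-B^H_{t_3})(B^H_{t_2}-B^H_{t_1})]$, and an extra term $-\tfrac12 G$ with
\[
G=(t_4+t_2)^{2H}-(t_4+t_1)^{2H}-(t_3+t_2)^{2H}+(t_3+t_1)^{2H}.
\]
Thus the sub-fBm covariance equals the fBm one minus $\tfrac12 G$, and under the hypotheses of both (C1) and (C2) the fBm part is already controlled by Lemma \ref{fbm}. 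It remains to bound $G$.

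The device is the mixed second-difference representation: with $p=t_1+t_3$, $u=\Delta t_4$, $w=\Delta t_2$,
\[
G=2H(2H-1)\int_0^u\!\!\int_0^w (p+x+y)^{2H-2}\,dy\,dx .
\]
For (C1) I discard $p\ge 0$ (legitimate since $2H-2<0$) and evaluate the remaining integral explicitly, obtaining the position-independent bound $|G|\le\big|(u+w)^{2H}-u^{2H}-w^{2H}\big|$. This is symmetric in $u,w$, matching the symmetric hypothesis $\Delta t_2/\Delta t_4\le1/\gamma$ or $\ge\gamma$; setting $m=\min(u,w)$ and $M=\max(u,w)$ with $m/M\le1/\gamma$, elementary estimates give $|G|\le c\,\gamma^{-\min(H,1-H)}(uw)^{H}$ — using superadditivity of $x^{2H}$ together with the mean value theorem when $H>1/2$, subadditivity when $H<1/2$, and $G\equiv 0$ when $H=1/2$. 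For (C2) I instead retain $p$: since $p\ge t_3\ge\Delta t_3$ and $2H-2<0$, one gets $|G|\le|2H(2H-1)|\,u\,w\,(\Delta t_3)^{2H-2}$, and because $\Delta t_4,\Delta t_2\le\Delta t_3/\gamma$ the powers of $\Delta t_3$ cancel, leaving $|G|\le|2H(2H-1)|\,\gamma^{-2(1-H)}(\Delta t_4\,\Delta t_2)^{H}$.

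To put these into the normalized form demanded by (C1)/(C2), I invoke the standard comparability of sub-fBm increments, $\E(X^1_t-X^1_s)^2\ge c_H|t-s|^{2H}$ with $c_H=(2-2^{2H-1})\wedge1>0$ (alternatively deducible from Assumption (B)), so that the product of the two standard deviations dominates $c_H(\Delta t_4\,\Delta t_2)^{H}$. Combining the fBm bounds from Lemma \ref{fbm} with the above estimates for $\tfrac12|G|$ and dividing through yields decreasing functions $\beta_1,\beta_2\to0$, as required. The main obstacle is exactly the point that the bound on $G$ must depend only on the \emph{ratios} of the increment lengths and not on the absolute positions $t_i$; this is why one must decide, separately for (C1) and (C2), whether to drop or to keep the base point $p=t_1+t_3$ in the second-difference integral — dropping it when the smallness is driven by the disparity between $\Delta t_2$ and $\Delta t_4$, and keeping it (bounded below by $\Delta t_3$) when the smallness is driven by the large separating gap.
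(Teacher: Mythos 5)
Your proposal is correct and follows the same overall architecture as the paper's proof: split the increment covariance into the fBm part (handled by Lemma \ref{fbm}) plus the non-stationary correction $I=-\tfrac12 G$, bound the correction, and normalize using $\E(X^1_t-X^1_s)^2\ge ((2-2^{2H-1})\wedge 1)|t-s|^{2H}$. Where you differ is in the treatment of $G$. The paper bounds $I$ by rescaling everything by $\Delta t_4$ (or $\Delta t_2$) and then applying the inequality $|y^{\alpha}-x^{\alpha}|\le |y-x|^{\alpha}$ when $H<1/2$ and one or two rounds of the mean value theorem when $H>1/2$, with separate case analyses for $\Delta t_2/\Delta t_4\le 1/\gamma$ versus $\ge\gamma$ and again for (C2). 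You instead write the second difference as $G=2H(2H-1)\int_0^{\Delta t_4}\int_0^{\Delta t_2}(p+x+y)^{2H-2}\,dy\,dx$ with $p=t_1+t_3$, and then make a single structural choice per assumption: drop $p$ for (C1), yielding the position-free bound $|G|\le|(\Delta t_2+\Delta t_4)^{2H}-\Delta t_2^{2H}-\Delta t_4^{2H}|$ (symmetric in the two increments, so both ratio cases are handled at once), and keep $p\ge\Delta t_3$ for (C2), which immediately gives $|G|\le |2H(2H-1)|\,\Delta t_2\,\Delta t_4\,(\Delta t_3)^{2H-2}$ — exactly the intermediate bound the paper reaches after two mean-value-theorem passes. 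This is the same estimate in integrated rather than iterated-MVT form; what it buys is a unified treatment of $H<1/2$ and $H>1/2$ in (C2) and a cleaner explanation of \emph{why} one discards the base point in one regime and retains it in the other. Both arguments are sound.
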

\begin{proof}  If $X^1_t$ is a $1$-dimensional sub-fBm, then 
\[
((2-2^{2H-1})\wedge 1) (t-s)^{2H}\leq \E\big(X^1_{t}-X^1_{s}\big)^2\leq ((2-2^{2H-1})\vee 1) (t-s)^{2H}.
\]
Let $B^H$ be a 1-dimensional fBm with Hurst index $H$. Then
\begin{align} \label{dcmpsub}
\E(X^1_{t_4}-X^1_{t_3})(X^1_{t_2}-X^1_{t_1})=& \frac{1}{2}\left[(t_1+t_4)^{2H}+(t_2+t_3)^{2H}-(t_2+t_4)^{2H}-(t_1+t_3)^{2H}\right] \nonumber\\
&\qquad\qquad+\E(B^{H}_{t_4}-B^{H}_{t_3})(B^{H}_{t_2}-B^{H}_{t_1}).
\end{align}
By Lemma \ref{fbm}, it suffices to show that the first term on the right-hand side of (\ref{dcmpsub}), i.e.,
\begin{align*}
I:&=\frac{1}{2} \left[(t_1+t_4)^{2H}+(t_2+t_3)^{2H}-(t_2+t_4)^{2H}-(t_1+t_3)^{2H}\right]
\end{align*}
satisfies Assumptions (C1) and (C2). Clearly $I=0$ if $H=1/2$. It suffices to show the case $H\neq 1/2$. 
 
For assumption (C1), if $\Delta t_2=\theta_2 \Delta t_4$ with $0\leq\theta_2\leq \frac{1}{\gamma}<1$, $\Delta t_1=\theta_1 \Delta t_4$ with $\theta_1\geq 0$ and $\Delta t_3=\theta_3 \Delta t_4$ with $\theta_3\geq 0$, then
\begin{align*}
|I|&\leq (\Delta t_4)^{2H}\Big|(2\theta_1+2\theta_2+\theta_3+1)^{2H}+(2\theta_1+\theta_2+\theta_3)^{2H}\\
&\qquad\qquad-(2\theta_1+\theta_2+\theta_3+1)^{2H}-(2\theta_1+2\theta_2+\theta_3)^{2H}\Big|.
\end{align*}
If $H<1/2$, the inequality $|y^{\alpha}-x^{\alpha}|\leq |y-x|^{\alpha}$ with $\alpha\in(0,1]$ implies that $|I|\leq (\Delta t_4)^{2H} \theta_2^{2H}$. If $H>1/2$, then by mean value theorem with $a,b\in(0,1)$,
\begin{align*}
|I|&\leq 2H (\Delta t_4)^{2H}\theta_2\Big[(2\theta_1+(1+a)\theta_2+\theta_3+1)^{2H-1}-(2\theta_1+(1+b)\theta_2+\theta_3)^{2H-1}\Big]\\
&\leq 2H (\Delta t_4)^{2H}\theta_2\, |1+(a-b)\theta_2|^{2H-1}\\
&\leq 4 (\Delta t_4)^{2H}\theta_2.
\end{align*}

If $\Delta t_4=\theta_4 \Delta t_2$ with $0\leq\theta_4\leq \frac{1}{\gamma}<1$, $\Delta t_1=\theta_1 \Delta t_2$ with $\theta_1\geq 0$ and $\Delta t_3=\theta_3 \Delta t_2$ with $\theta_3\geq 0$, then, using similar argument as above, we can show
\begin{align*}
|I|&\leq (\Delta t_2)^{2H}\Big|(2\theta_1+2+\theta_3+\theta_4)^{2H}+(2\theta_1+1+\theta_3)^{2H}\\
&\qquad\qquad-(2\theta_1+1+\theta_3+\theta_4)^{2H}-(2\theta_1+2+\theta_3)^{2H}\Big|\\
&\leq 4 (\Delta t_2)^{2H} \theta_4^{(2H\wedge 1)}
\end{align*}
for $H\neq 1/2$.  

So, for $H\neq 1/2$,
\begin{align*}
|I|\leq \frac{4}{(2-2^{2H-1})\wedge 1}(\frac{1}{\gamma^{H}}+\frac{1}{\gamma^{1-H}})\left[\E\big(X^1_{t_2}-X^1_{t_1}\big)^2 \right]^{\frac{1}{2}}\left[\E\big(X^1_{t_4}-X^1_{t_3}\big)^2\right]^{\frac{1}{2}}.
\end{align*}

Next we show that assumption (C2) is also satisfied by sub-fBms. Note that $\frac{\Delta t_2}{\Delta t_3}\leq \frac{1}{\gamma}$ and $\frac{\Delta t_4}{\Delta t_3} \leq \frac{1}{\gamma}$. If $H\neq 1/2$, mean value theorem yields that
\begin{align*}
I&=\frac{1}{2}\Big[(2\Delta t_1+\Delta t_2+\Delta t_3+\Delta t_4)^{2H}+(2\Delta t_1+2\Delta t_2+\Delta t_3)^{2H}\\
&\qquad-(2\Delta t_1+2\Delta t_2+\Delta t_3+\Delta t_4)^{2H}-(2\Delta t_1+\Delta t_2+\Delta t_3)^{2H}\Big]\\
&=H\Delta t_2[(2\Delta t_1+\Delta t_2+\Delta t_3+c_1)^{2H-1}-(2\Delta t_1+\Delta t_2+\Delta t_3+\Delta t_4+c_2)^{2H-1}]\\
&=H(2H-1)\Delta t_2(2\Delta t_1+\Delta t_2+\Delta t_3+c_3)^{2H-2}(c_1-c_2-\Delta t_4),
\end{align*}
where $c_1,c_2\in(0,\Delta t_2)$ and $c_3$ is between $c_1$ and $c_2+\Delta t_4$. 

This implies
\begin{align*}
|I|&\leq (2\Delta t_1+\Delta t_2+\Delta t_3)^{2H-2}(\Delta t_2+\Delta t_4)\Delta t_2\leq (\Delta t_3)^{2H-2}[(\Delta t_2)^{2}+\Delta t_2\Delta t_4].
\end{align*}

Similarly,  $|I|\leq(\Delta t_3)^{2H-2}[(\Delta t_4)^{2}+\Delta t_2\Delta t_4]$. Therefore,
\begin{align*}
|I|&\leq 2(\Delta t_3)^{2H-2}[\Delta t_2\Delta t_4]\\
& \leq \frac{2}{\gamma^{2-2H}}[(\Delta t_2)^{H}(\Delta t_4)^{H}]\\
&\leq \frac{2}{(2-2^{2H-1})\wedge 1}\frac{1}{\gamma^{2-2H}}\left[\E\big(X^1_{t_4}-X^1_{t_3}\big)^2\right]^{\frac{1}{2}}\left[\E\big(X^1_{t_2}-X^1_{t_1}\big)^2 \right]^{\frac{1}{2}}.
\end{align*}
The proof is completed.
\end{proof}

\begin{lemma} \label{bifbm} Assumptions (C1) and (C2) are satisfied by bi-fBms.
\end{lemma}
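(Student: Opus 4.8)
The plan is to isolate the ``non-fBm part'' of the increment covariance, exactly as was done for sub-fBms in Lemma \ref{subfbm}. Writing $g(a,b)=(a^{2H}+b^{2H})^{K}$ and letting $B^{HK}$ denote a one-dimensional fBm with Hurst index $HK\in(0,1)$, a direct computation from the bi-fBm covariance gives the decomposition
\[
\E\big(X^1_{t_4}-X^1_{t_3}\big)\big(X^1_{t_2}-X^1_{t_1}\big)=II+2^{1-K}\,\E\big(B^{HK}_{t_4}-B^{HK}_{t_3}\big)\big(B^{HK}_{t_2}-B^{HK}_{t_1}\big),
\]
where $II=2^{-K}\big[g(t_4,t_2)-g(t_4,t_1)-g(t_3,t_2)+g(t_3,t_1)\big]$. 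Since $HK\in(0,1)$ and the standard deviation $[\E(B^{HK}_{t}-B^{HK}_{s})^2]^{1/2}=|t-s|^{HK}$ is comparable to $[\E(X^1_t-X^1_s)^2]^{1/2}$ through the two-sided bound $2^{-K}|t-s|^{2HK}\le \E(X^1_t-X^1_s)^2\le 2^{1-K}|t-s|^{2HK}$, Lemma \ref{fbm} already shows that the fBm term satisfies the estimates needed in (C1) and (C2). It therefore suffices to prove that $II$ obeys the same inequalities with the normalization $(\Delta t_4)^{HK}(\Delta t_2)^{HK}$. (When $K=1$ the process is an fBm and $II\equiv 0$, so we assume $K\in(0,1)$.)

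The key observation is that $II$ is a mixed second difference of $g$, hence
\[
II=2^{-K}\int_{t_3}^{t_4}\int_{t_1}^{t_2}\partial_a\partial_b g(a,b)\,db\,da,\qquad \partial_a\partial_b g(a,b)=-4H^2K(1-K)(a^{2H}+b^{2H})^{K-2}a^{2H-1}b^{2H-1}.
\]
For $K\in(0,1)$ the integrand has a constant sign, so $|II|$ equals the integral of its absolute value; moreover $g$ is homogeneous of degree $2HK$, so both $II$ and the target $(\Delta t_2\Delta t_4)^{HK}$ are homogeneous of that degree, which permits a normalization such as $\Delta t_4=1$. On the domain of integration one has $b\le t_2<t_3\le a$, so $a^{2H}+b^{2H}\ge a^{2H}$ and the kernel is controlled by $a^{2H(K-2)}$; the remaining task is to integrate $a^{2HK-2H-1}b^{2H-1}$ and to extract a factor tending to $0$ in the two relevant regimes.

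For Assumption (C2) this is straightforward: when $\Delta t_2,\Delta t_4\le \Delta t_3/\gamma$ the two outer intervals are separated by the long gap $\Delta t_3$, so $a^{2H}+b^{2H}\ge t_3^{2H}\ge(\Delta t_3)^{2H}$ forces $(a^{2H}+b^{2H})^{K-2}\le(\Delta t_3)^{2H(K-2)}$; bounding the single integrals of $a^{2H-1}$ and $b^{2H-1}$ over intervals of length $\Delta t_4$ and $\Delta t_2$ and then using $\Delta t_3\ge\gamma(\Delta t_2\vee\Delta t_4)$ gives $|II|\le c\,\gamma^{2H(K-2)}(\Delta t_2)^{HK}(\Delta t_4)^{HK}$, i.e. the required bound with $\beta_2(\gamma)=c\gamma^{2H(K-2)}\to0$.

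For Assumption (C1) the argument is the same in spirit but more delicate, and this is where I expect the main obstacle. Because the increments are non-stationary, $II$ genuinely depends on the absolute locations $t_i$ and not merely on the increment lengths, so one cannot rely on a single mean-value estimate as in the sub-fBm case. Concretely, in the regime $\Delta t_2\le\Delta t_4/\gamma$ one must estimate $\int_{t_3}^{t_4}a^{p}\,da$ with $p=2HK-2H-1<-1$: the bound $t_3^{p}\Delta t_4$ is sharp when $\Delta t_4\lesssim t_3$ but far too lossy when $\Delta t_4\gg t_3$, whereas $t_3^{p+1}/|p+1|$ is sharp only in the opposite regime, so neither bound works uniformly. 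The remedy is to keep the exact value $\tfrac{1}{|p+1|}\big(t_3^{p+1}-t_4^{p+1}\big)$ and use $1-(t_4/t_3)^{p+1}\le \min\{1,\,|p+1|\Delta t_4/t_3\}$, i.e. to split according to whether $\Delta t_4\lesssim t_3$ or $\Delta t_4\gg t_3$; one must likewise distinguish $H\le\tfrac12$ from $H>\tfrac12$ when estimating $t_2^{2H}-t_1^{2H}$, using $|y^{2H}-x^{2H}|\le|y-x|^{2H}$ in the first case and the mean value theorem in the second. In each case, together with $t_2\ge\Delta t_2$ and the constraint $\Delta t_2\le\Delta t_4/\gamma$, these estimates collapse to $|II|\le c\,\gamma^{-HK}(\Delta t_2)^{HK}(\Delta t_4)^{HK}$; the complementary regime $\Delta t_2\ge\gamma\Delta t_4$ is treated by the same scheme and is in fact simpler, since the long interval then carries the variable $b$ and the short one carries $a$. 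Converting the $(\Delta t_i)^{HK}$ back into standard deviations via the variance comparability above yields both (C1) and (C2), completing the proof.
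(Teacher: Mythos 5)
Your proposal is correct in substance but follows a genuinely different route from the paper for the key estimates. Both proofs start from the same decomposition of $\E(X^1_{t_4}-X^1_{t_3})(X^1_{t_2}-X^1_{t_1})$ into $2^{1-K}$ times an fBm($HK$) increment covariance (handled by Lemma \ref{fbm}) plus the residual term $I=2^{-K}[g(t_1,t_3)+g(t_2,t_4)-g(t_1,t_4)-g(t_2,t_3)]$ with $g(a,b)=(a^{2H}+b^{2H})^K$, and both convert $(\Delta t_2\Delta t_4)^{HK}$ back into standard deviations via the two-sided variance bound. Where you diverge is in how $I$ is controlled: the paper normalizes by the largest increment, writes everything in terms of ratios $\theta_i$, and runs a case analysis based on repeated mean value theorems, the concavity of $x\mapsto x^K$, and $|y^{\alpha}-x^{\alpha}|\le|y-x|^{\alpha}$ (with separate sub-cases such as $\theta_1\le 1$ versus $\theta_1>1$); you instead represent $I$ as the double integral $2^{-K}\int_{t_3}^{t_4}\int_{t_1}^{t_2}\partial_a\partial_b g\,db\,da$, observe that the kernel $-4H^2K(1-K)(a^{2H}+b^{2H})^{K-2}a^{2H-1}b^{2H-1}$ has constant sign for $K\in(0,1)$, bound it by $a^{2H(K-2)}$ using $a\ge t_3\ge b$, and reduce to one-dimensional integrals, splitting via $1-(t_4/t_3)^{p+1}\le\min\{1,|p+1|\Delta t_4/t_3\}$. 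I checked the resulting estimates in all regimes ($H\le\tfrac12$ versus $H>\tfrac12$, $\Delta t_4\lesssim t_3$ versus $\Delta t_4\gg t_3$, and both orderings of $\Delta t_2,\Delta t_4$) and they do close; your structural observation that the mixed second difference is sign-definite is arguably cleaner than the paper's MVT chains, at the cost of needing essentially the same case distinctions in the end. One minor inaccuracy: the specific decay rates you announce are too optimistic when $H>\tfrac12$. For (C2), the bound $\int_{t_3}^{t_4}a^{2H-1}\,da\le c(\Delta t_4)^{2H}$ fails for $H>\tfrac12$ when $t_3\gg\Delta t_4$ (one only gets $t_4^{2H-1}\Delta t_4$, and the surplus power of $t_3$ must be absorbed into the kernel, yielding $\beta_2(\gamma)\asymp\gamma^{2HK-2}$ rather than $\gamma^{2H(K-2)}$ --- the example $\Delta t_1=0$, $\Delta t_2=\Delta t_4=\Delta t_3/\gamma$ shows your stated rate is not attainable there); similarly the (C1) rate degrades to $\gamma^{-(1-HK)}$ in some sub-cases rather than $\gamma^{-HK}$. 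Since Assumptions (C1) and (C2) only require \emph{some} nonnegative decreasing $\beta_i(\gamma)\to0$, this does not affect the validity of the conclusion, but the exponents should be corrected in a written-up version.
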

\begin{proof}  Note that bi-fBms are fBms when $K=1$. By Lemma \ref{fbm}, it suffices to consider the case $K<1$. If $X^1_t$ is a $1$-dimensional bi-fBm, then $(t-s)^{2HK}\leq \E\big(X^1_{t}-X^1_{s}\big)^2\leq 2^{1-K}|t-s|^{2HK}$, see \cite{yan}.  Let $B^{HK}$ be a 1-dimensional fBm with Hurst index $HK$. 
For any $0<t_1<t_2<t_3<t_4<\infty$,  we denote $s_i=t_i^{2H}$ for $i=1,2,3, 4$. Then
\begin{align} \label{dcmpbi}
\E(X^1_{t_4}-X^1_{t_3})(X^1_{t_2}-X^1_{t_1})=& 2^{-K}\left[(s_1+s_3)^K+(s_2+s_4)^K-(s_1+s_4)^K-(s_2+s_3)^K\right] \nonumber\\
&\qquad+ 2^{1-K}\E(B^{HK}_{t_4}-B^{HK}_{t_3})(B^{HK}_{t_2}-B^{HK}_{t_1}).
\end{align}

By Lemma \ref{fbm}, it suffices to show that the first term on the right-hand side of (\ref{dcmpbi}) satisfies Assumptions (C1) and (C2). Define
\begin{align*}
I:&=2^{-K}\left[(s_1+s_3)^K+(s_2+s_4)^K-(s_1+s_4)^K-(s_2+s_3)^K\right]\\
&=2^{-K}\Big[(\Delta t_1)^{2H}+(\Delta t_1+\Delta t_2+\Delta t_3)^{2H}\Big]^K+2^{-K}\Big[(\Delta t_1+\Delta t_2)^{2H}+(\Delta t_1+\Delta t_2+\Delta t_3+\Delta t_4)^{2H}\Big]^K\\
&\qquad-2^{-K}\Big[(\Delta t_1)^{2H}+(\Delta t_1+\Delta t_2+\Delta t_3+\Delta t_4)^{2H}\Big]^K-2^{-K}\Big[(\Delta t_1+\Delta t_2)^{2H}+(\Delta t_1+\Delta t_2+\Delta t_3)^{2H}\Big]^K.
\end{align*}

For Assumption (C1), if $H\leq 1/2$, then it is easy to see 
\begin{align*}
|I|
&\leq\Big[(\Delta t_1+\Delta t_2+\Delta t_3+\Delta t_4)^{2H}-(\Delta t_1+\Delta t_2+\Delta t_3)^{2H}\Big]^K\leq (\Delta t_4)^{2HK},
\end{align*}
where we use the increment property of concave function $y=x^K$ in the first inequality and $|y^{\alpha}-x^{\alpha}|\leq |y-x|^{\alpha}$ with $\alpha\in(0,1]$ in the second inequality.

Similarly, 
\begin{align*}
|I|
&\leq \Big[(\Delta t_1+\Delta t_2)^{2H}-(\Delta t_1)^{2H}\Big]^K\leq (\Delta t_2)^{2HK}.
\end{align*}
Therefore, 
\begin{align*}
|I|\leq \frac{1}{\gamma^{HK}}  (\Delta t_2)^{HK}(\Delta t_4)^{HK}\leq  \frac{1}{\gamma^{HK}} \left[\E\big(X^1_{t_4}-X^1_{t_3}\big)^2\right]^{\frac{1}{2}}\left[\E\big(X^1_{t_2}-X^1_{t_1}\big)^2 \right]^{\frac{1}{2}}.
\end{align*}

For the case $H>1/2$,  if $\Delta t_2=\theta_2 \Delta t_4$ with $0\leq\theta_2\leq \frac{1}{\gamma}<1$, $\Delta t_1=\theta_1 \Delta t_4$ with $\theta_1\geq 0$ and $\Delta t_3=\theta_3 \Delta t_4$ with $\theta_3\geq 0$, then 
\begin{align*}
|I|&\leq (\Delta t_4)^{2HK}\Big|\Big[(\theta_1)^{2H}+(\theta_1+\theta_2+\theta_3)^{2H}\Big]^K+\Big[(\theta_1+\theta_2)^{2H}+(\theta_1+\theta_2+\theta_3+1)^{2H}\Big]^K\\
&\qquad-\Big[(\theta_1)^{2H}+(\theta_1+\theta_2+\theta_3+1)^{2H}\Big]^K-\Big[(\theta_1+\theta_2)^{2H}+(\theta_1+\theta_2+\theta_3)^{2H}\Big]^K\Big|.
\end{align*}
If $\theta_1\in[0,1]$, then 
\begin{align*}
|I|
&\leq (\Delta t_4)^{2HK}\Big[\big[(\theta_1+\theta_2)^{2H}+(\theta_1+\theta_2+\theta_3)^{2H}\big]^K-\big[(\theta_1)^{2H}+(\theta_1+\theta_2+\theta_3)^{2H}\big]^K\Big]\\
&\leq (\Delta t_4)^{2HK}\big[(\theta_1+\theta_2)^{2H}-(\theta_1)^{2H}\big]^K\\
&\leq 2H(\theta_1+\theta_2)^{K(2H-1)}(\Delta t_4)^{2HK}\theta^K_2\\
&\leq 4(\Delta t_4)^{2HK}\theta^K_2, 
\end{align*}
where we use the increment property of concave function $y=x^K$ in the first inequality and $|y^{\alpha}-x^{\alpha}|\leq |y-x|^{\alpha}$ for $\alpha\in(0,1]$ in the second inequality.

If $\theta_1>1$, then by mean value theorem
\begin{align*}
|I|&\leq K(1-K)(\Delta t_4)^{2HK}\, \big[(\theta_1)^{2H}+(\theta_1+\theta_2+\theta_3)^{2H}\big]^{K-2}\big[(\theta_1+\theta_2)^{2H}-(\theta_1)^{2H}\big]\Big[(\theta_1+\theta_2)^{2H}-(\theta_1)^{2H}\\
&\qquad\qquad+(\theta_1+\theta_2+\theta_3+1)^{2H}-(\theta_1+\theta_2+\theta_3)^{2H}\Big]\\
&\leq  (\Delta t_4)^{2HK} (\theta_1+\theta_2+\theta_3)^{2H(K-2)}\big[(\theta_1+\theta_2)^{2H}-(\theta_1)^{2H}\big]\Big[(\theta_1+\theta_2+\theta_3+1)^{2H}-(\theta_1+\theta_2+\theta_3)^{2H}\Big]\\
&\leq  (\Delta t_4)^{2HK} (\theta_1+\theta_2+\theta_3)^{2H(K-2)}(2\theta_1+2\theta_2+2\theta_3)^{2(2H-1)}\theta_2\\
&\leq 4(\theta_1+\theta_2+\theta_3)^{2HK-2}(\Delta t_4)^{2HK}\theta_2\\
&\leq 4(\Delta t_4)^{2HK}\theta_2.
\end{align*}

If $\Delta t_4=\theta_4 \Delta t_2$ with $0\leq\theta_4\leq \frac{1}{\gamma}<1$, $\Delta t_1=\theta_1 \Delta t_2$ with $\theta_1\geq 0$ and $\Delta t_3=\theta_3 \Delta t_2$ with $\theta_3\geq 0$, then 
\begin{align*}
|I|&\leq (\Delta t_2)^{2HK}\bigg|\Big[(\theta_1)^{2H}+(\theta_1+1+\theta_3)^{2H}\Big]^K+\Big[(\theta_1+1)^{2H}+(\theta_1+1+\theta_3+\theta_4)^{2H}\Big]^K\\
&\qquad-\Big[(\theta_1)^{2H}+(\theta_1+1+\theta_3+\theta_4)^{2H}\Big]^K-\Big[(\theta_1+1)^{2H}+(\theta_1+1+\theta_3)^{2H}\Big]^K\bigg|\\
&\leq K(1-K)(\Delta t_2)^{2HK}\big[(\theta_1)^{2H}+(\theta_1+1+\theta_3)^{2H}\big]^{K-2}\big[(\theta_1+1+\theta_3+\theta_4)^{2H}-(\theta_1+1+\theta_3)^{2H}\big]\\
&\qquad\qquad\times\big[(\theta_1+1)^{2H}-(\theta_1)^{2H}+(\theta_1+1+\theta_3+\theta_4)^{2H}-(\theta_1+1+\theta_3)^{2H}\big]\\
&\leq 2 (\theta_1+1+\theta_3)^{2H(K-2)}(\theta_1+2+\theta_3)^{2H-1}(\theta_1+2+\theta_3)^{2H-1} (\Delta t_2)^{2HK} \theta_4\\
&\leq 8 (\theta_1+1+\theta_3)^{2HK-2}(\Delta t_2)^{2HK} \theta_4\\
&\leq 8 (\Delta t_2)^{2HK}\theta_4.
\end{align*}
So 
\begin{align*}
|I|\leq \frac{8}{\gamma^{K-HK}} (\Delta t_2)^{HK}(\Delta t_4)^{HK}\leq \frac{8}{\gamma^{K-HK}} \left[\E\big(X^1_{t_4}-X^1_{t_3}\big)^2\right]^{\frac{1}{2}}\left[\E\big(X^1_{t_2}-X^1_{t_1}\big)^2 \right]^{\frac{1}{2}}.
\end{align*}

For Assumption (C2), recall the condition $\frac{\Delta t_2}{\Delta t_3}\leq \frac{1}{\gamma}$ and $\frac{\Delta t_4}{\Delta t_3} \leq \frac{1}{\gamma}$ and then by mean value theorem,
\begin{align*}
I&=2^{-K}K\Big[(\Delta t_1+\Delta t_2+\Delta t_3+\Delta t_4)^{2H}-(\Delta t_1+\Delta t_2+\Delta t_3)^{2H}\Big]\\
&\qquad\qquad\times \Big[[(\Delta t_1+\Delta t_2)^{2H}+c_1]^{K-1}-[(\Delta t_1)^{2H}+c_2]^{K-1}\Big],
\end{align*}
where $c_1,c_2\in((\Delta t_1+\Delta t_2+\Delta t_3)^{2H}, (\Delta t_1+\Delta t_2+\Delta t_3+\Delta t_4)^{2H})$.

If $H\leq 1/2$, then, by the inequality $|y^{\alpha}-x^{\alpha}|\leq |y-x|^{\alpha}$ with $\alpha\in(0,1]$,
\begin{align*}
|I|\leq (\Delta t_4)^{2H}[(\Delta t_2)^{2H}+(\Delta t_4)^{2H}](\Delta t_3)^{2H(K-2)}.
\end{align*}

If $H>1/2$, then by mean value theorem
\begin{align*}
I&=2^{1-K}KH\Delta t_4(\Delta t_1+\Delta t_2+\Delta t_3+c_3)^{2H-1}\left[[(\Delta t_1+\Delta t_2)^{2H}+c_1]^{K-1}-[(\Delta t_1)^{2H}+c_2]^{K-1}\right]\\
&=2^{1-K}KH(K-1)\Delta t_4(\Delta t_1+\Delta t_2+\Delta t_3+c_3)^{2H-1}[(\Delta t_1+\Delta t_2)^{2H}-(\Delta t_1)^{2H}+c_1-c_2]  c_4^{K-2},
\end{align*}
where $c_3\in(0,\Delta t_4)$ and $c_4$ is between $(\Delta t_1+\Delta t_2)^{2H}+c_1$ and $(\Delta t_1)^{2H}+c_2$. 

So
\begin{align*}
|I|
&\leq  \Delta t_4(\Delta t_1+\Delta t_2+\Delta t_3+\Delta t_4)^{2H-1}|(\Delta t_1+\Delta t_2)^{2H}-(\Delta t_1)^{2H}+c_1-c_2| c_4^{K-2}\\
&\leq 2\left(1+\frac1\gamma\right)^{2(2H-1)}[\Delta t_4\Delta t_2+(\Delta t_4)^2] (\Delta t_3)^{2HK-2}\\
&\leq 8 [\Delta t_2\Delta t_4+(\Delta t_4)^2] (\Delta t_3)^{2HK-2},
\end{align*}
where we use the facts that $\Delta t_4\leq \frac{1}{\gamma}\Delta t_3$, $|c_1-c_2|\le (\Delta t_1+\Delta t_2+\Delta t_3+\Delta t_4)^{2H}-(\Delta t_1+\Delta t_2+\Delta t_3)^{2H}$, $c_4\geq (\Delta t_1+\Delta t_2+\Delta t_3)^{2H}$ and $K\in(0,1]$ in the second inequality.

Similarly,
\begin{align*}
I&=2^{-K}K[(\Delta t_1+\Delta t_2)^{2H}-(\Delta t_1)^{2H}]\\
&\qquad\qquad\times \left[[(\Delta t_1+\Delta t_2+\Delta t_3+\Delta t_4)^{2H}+c_5]^{K-1}-[(\Delta t_1+\Delta t_2+\Delta t_3)^{2H}+c_6]^{K-1}\right],
\end{align*}
where $c_5, c_6\in((\Delta t_1)^{2H}, (\Delta t_1+\Delta t_2)^{2H})$.

If $H\leq 1/2$, then it is easy to see that $|I|\leq (\Delta t_2)^{2H}[(\Delta t_2)^{2H}+(\Delta t_4)^{2H}](\Delta t_3)^{2H(K-2)}$. If $H>1/2$, then 
\begin{align*}
I&=2^{1-K}KH(K-1)\Delta t_2(\Delta t_1+c_7)^{2H-1}\\
&\qquad\qquad\times \left[[(\Delta t_1+\Delta t_2+\Delta t_3+\Delta t_4)^{2H}-(\Delta t_1+\Delta t_2+\Delta t_3)^{2H}+c_5-c_6]\right] c_8^{K-2},
\end{align*}
where $c_7\in(0,\Delta t_2)$ and $c_8$ is between $(\Delta t_1+\Delta t_2+\Delta t_3+\Delta t_4)^{2H}+c_5$ and $(\Delta t_1+\Delta t_2+\Delta t_3)^{2H}+c_6$. 

So
\begin{align*}
|I|
&\leq \Delta t_2(\Delta t_1+\Delta t_2+\Delta t_3+\Delta t_4)^{2(2H-1)} (\Delta t_2+\Delta t_4) (\Delta t_1+\Delta t_2+\Delta t_3)^{2H(K-2)}\\
&\leq 8[(\Delta t_2)^2+\Delta t_2\Delta t_4](\Delta t_3)^{2HK-2}.
\end{align*}
Therefore, for any $H\in(0,1)$ and $K\in(0,1)$,
\begin{align*}
|I|
&\leq 8[(\Delta t_2)^{4H\wedge 2}\wedge (\Delta t_4)^{4H\wedge 2} +\Delta t^{2H\wedge 1}_2\Delta t^{2H\wedge 1}_4](\Delta t_3)^{2HK-(4H\wedge 2)}\\
&\leq 16 (\Delta t_2)^{2H\wedge 1}(\Delta t_4)^{2H\wedge 1}(\Delta t_3)^{2HK-(4H\wedge 2)}\\
&\leq 16\gamma^{2HK-(4H\wedge 2)}\left[\E\big(X^1_{t_4}-X^1_{t_3}\big)^2\right]^{\frac{1}{2}}\left[\E\big(X^1_{t_2}-X^1_{t_1}\big)^2 \right]^{\frac{1}{2}}.
\end{align*}
This completes the proof.
\end{proof}

\bigskip

\section{Proof of Theorem \ref{thm1}}

In this section, we will prove Theorem \ref{thm1}. Some ideas will be borrowed from the proof of Theorem \ref{thm0} in \cite{bx}, in which the stationary increment property of fBm played a crucial role. Noting that the stationary increment property is not assumed in this article, new ideas would be introduced to obtain the desired limit law. 
 For the sake of clarity, we will spell out all the details.

For any $t_1>0$ and $t_2>0$, define
\[
F_n(t_1,t_2)=\int^{e^{nt_1}}_0\int^{e^{nt_2}}_0 f(X_u-\widetilde{X}_v)\, du\, dv.
\]
The following result shows that the limiting distribution of $\frac{1}{n}F_n(t_1,t_2)$ depends on $t_1\wedge t_2$. 
\begin{lemma}  \label{lemma3.1} 
\[
\lim_{n\to\infty}\frac{1}{n}\E\Big[|F_n(t_1,t_2)-F_n(t_1\wedge t_2,t_1\wedge t_2)|\Big]=0.
\]
\end{lemma}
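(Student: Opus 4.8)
The plan is to exploit that the difference $F_n(t_1,t_2)-F_n(t_1\wedge t_2,t_1\wedge t_2)$ is an integral over an \emph{off-diagonal} region, on which the Gaussian density of $X_u-\widetilde X_v$ decays fast enough to make the whole contribution bounded uniformly in $n$. By the symmetric nature of the estimate it suffices to treat the case $t_1\le t_2$, so that $t_1\wedge t_2=t_1$ and, writing $R_n=[0,e^{nt_1}]\times[e^{nt_1},e^{nt_2}]$ for the region in the $(u,v)$-plane,
\[
F_n(t_1,t_2)-F_n(t_1,t_1)=\iint_{R_n} f(X_u-\widetilde X_v)\,du\,dv.
\]
First I would move the absolute value inside and use the triangle inequality together with Tonelli's theorem to obtain
\[
\E\big|F_n(t_1,t_2)-F_n(t_1,t_1)\big|\le \iint_{R_n}\E\big|f(X_u-\widetilde X_v)\big|\,du\,dv.
\]

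The second step is to estimate $\E|f(X_u-\widetilde X_v)|$ at a single point. Since $X$ and $\widetilde X$ are independent centered Gaussian processes whose $d$ components are independent copies with $\Var(X^1_t)=\alpha_1 t^{2H}$ by \eqref{alpha0}, the vector $X_u-\widetilde X_v$ is centered Gaussian on $\R^d$ with covariance $\alpha_1(u^{2H}+v^{2H})\,\I$. Bounding its density pointwise by its maximal value yields
\[
\E\big|f(X_u-\widetilde X_v)\big|\le \frac{1}{\big(2\pi\alpha_1(u^{2H}+v^{2H})\big)^{d/2}}\int_{\R^d}|f(x)|\,dx,
\]
where the last integral is finite since the assumptions on $f$ force $f\in L^1(\R^d)$ (see Remark \ref{remark1}).

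The decisive step is to bound $\iint_{R_n}(u^{2H}+v^{2H})^{-d/2}\,du\,dv$ using the critical relation $Hd=2$. On $R_n$ one has $v\ge e^{nt_1}\ge u$, whence $u^{2H}+v^{2H}\ge v^{2H}$ and, since $Hd=2$,
\[
\iint_{R_n}\frac{du\,dv}{\big(u^{2H}+v^{2H}\big)^{d/2}}\le \int_0^{e^{nt_1}}\!\!\Big(\int_{e^{nt_1}}^{e^{nt_2}} v^{-Hd}\,dv\Big)du =\int_0^{e^{nt_1}}\!\!\Big(\int_{e^{nt_1}}^{e^{nt_2}} v^{-2}\,dv\Big)du\le e^{nt_1}\cdot e^{-nt_1}=1.
\]
Combining the three displays shows that $\E|F_n(t_1,t_2)-F_n(t_1,t_1)|$ is bounded by a constant independent of $n$; dividing by $n$ and sending $n\to\infty$ gives the claim. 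The case $t_1>t_2$ is handled identically, with the roles of $u$ and $v$ interchanged so that the integrand is controlled by $u^{-2}$.

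I do not expect a genuine obstacle. The entire content is the observation that, although the off-diagonal region $R_n$ has area of order $e^{2n(t_1\vee t_2)}$, the decay $(u^{2H}+v^{2H})^{-d/2}$ exactly at criticality $Hd=2$ makes its integral $O(1)$, whereas the only contribution growing like $n$ comes from the near-diagonal region $u\asymp v$ that is retained in $F_n(t_1\wedge t_2,t_1\wedge t_2)$. The single point needing care is the degeneracy of the covariance as $u\to0$; this is harmless because on $R_n$ the variable $v$ is bounded below by $e^{nt_1}>0$, so the Gaussian density bound above is valid throughout the region.
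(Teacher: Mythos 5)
Your proof is correct and follows essentially the same route as the paper's: reduce to the off-diagonal region, bound the Gaussian density of $X_u-\widetilde X_v$ by $(2\pi\alpha_1(u^{2H}+v^{2H}))^{-d/2}$, and use $Hd=2$ to show the region's integral is $O(1)$, so the $1/n$ factor kills it. The only difference is notational (which variable occupies the large range), so nothing further is needed.
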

\begin{proof}
Without loss of generality, we can assume $t_1\leq t_2$ and then obtain
\begin{align*}
\E\left[|F_n(t_1,t_2)-F_n(t_1,t_1)|\right]
&\leq \frac{1}{n}\; \E\left[\int^{e^{nt_1}}_0\int^{e^{nt_2}}_{e^{nt_1}} |f(X_u-\widetilde{X}_v)|\, du\, dv\right]\\
&\leq \frac{1}{n}\int^{e^{nt_1}}_0\int^{+\infty}_{e^{nt_1}}\int_{\R^d} |f(x)|(\alpha_1 u^{2H}+\alpha_1 v^{2H})^{-\frac{d}{2}}\, dx\, du\, dv\\
&\leq \frac{1}{\alpha_1^{d/2}n} \int_{\R^d} |f(x)|\, dx \int^{e^{nt_1}}_0\int^{+\infty}_{e^{nt_1}} u^{-2}\, du\, dv\\
&\leq \frac{1}{\alpha_1^{d/2}n} \int_{\R^d} |f(x)|\, dx,
\end{align*}
where in the second inequality we use the fact that the probability density function of $X_u-\widetilde{X}_v$ is less than $(2\pi)^{-d/2} (\alpha_1u^{2H}+\alpha_1 v^{2H})^{-\frac{d}{2}}$. This gives the desired result.
\end{proof}

\medskip

Now we only need to consider the limiting distribution of $\frac{1}{n}F_n(t,t)$ for $t>0$. For simplicity of notation, we write $F_n(t)$ for $\frac{1}{n}F_n(t,t)$. Using Fourier transform, $F_n(t)$ can be rewritten as 
\begin{align*}
F_n(t)=\frac{1}{(2\pi)^d n}\int^{e^{nt}}_0\int^{e^{nt}}_0\int_{\R^d} \widehat{f}(x)\, \exp\left(-\iota x\cdot (X_u-\widetilde{X}_v) \right)\, dx\, du\, dv.
\end{align*}

Let
\begin{align*} \label{gn}
G_n(t)=\frac{1}{(2\pi)^d n}  \int^{e^{nt}}_0\int^{e^{nt}}_0\int_{|x|<1} \widehat{f}(0)\, \exp\left(-\iota x\cdot (X_u-\widetilde{X}_v)\right)\, dx\, du\, dv.
\end{align*}
We show that $F_n(t)$ and $G_n(t)$ have the same limiting distribution.
\begin{lemma} \label{lemma3.2}
\[
\lim_{n\to\infty}\E\left[|F_n(t)-G_n(t)|\right]=0.
\]
\end{lemma}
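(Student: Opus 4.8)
The goal is to show that the difference $F_n(t)-G_n(t)$ vanishes in $L^1$ as $n\to\infty$. The plan is to write the difference as a single integral and split it according to the two sources of discrepancy between $F_n(t)$ and $G_n(t)$: first, replacing $\widehat f(x)$ by $\widehat f(0)$; second, truncating the frequency domain from $\R^d$ to the unit ball $|x|<1$. Concretely, I would write
\begin{align*}
F_n(t)-G_n(t)=\frac{1}{(2\pi)^d n}\int^{e^{nt}}_0\int^{e^{nt}}_0\int_{\R^d}
g(x)\,\exp\!\left(-\iota x\cdot(X_u-\widetilde X_v)\right)dx\,du\,dv,
\end{align*}
where $g(x)=\widehat f(x)-\widehat f(0)\mathbf 1_{\{|x|<1\}}$, and then bound $\E|F_n(t)-G_n(t)|$ by taking absolute values inside and using $|\exp(-\iota x\cdot(X_u-\widetilde X_v))|=1$ after conditioning. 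The natural approach is to take expectation first via the characteristic function of the Gaussian vector $X_u-\widetilde X_v$.

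The key quantitative step is that, after pulling the expectation inside,
\[
\E\Big[\exp\!\left(-\iota x\cdot(X_u-\widetilde X_v)\right)\Big]
=\exp\!\Big(-\tfrac12\,(\alpha_1 u^{2H}+\alpha_1 v^{2H})\,|x|^2\Big),
\]
using independence of $X$ and $\widetilde X$ together with $\Var(X^1_u)=\alpha_1 u^{2H}$ from \eqref{alpha0}. Thus I would bound
\[
\E\big|F_n(t)-G_n(t)\big|\le
\frac{1}{(2\pi)^d n}\int^{e^{nt}}_0\int^{e^{nt}}_0\int_{\R^d}
|g(x)|\,\exp\!\Big(-\tfrac{\alpha_1}{2}(u^{2H}+v^{2H})|x|^2\Big)\,dx\,du\,dv.
\]
For the part of $g$ coming from $\widehat f(x)-\widehat f(0)$ on $|x|<1$, Remark \ref{remark1} gives $|\widehat f(x)-\widehat f(0)|\le c|x|^{\beta}$ for $x$ near the origin (note $\widehat f(0)=\int f$, but the Hölder bound on $\widehat f$ follows from $\int|f(x)||x|^\beta dx<\infty$ regardless of whether $\int f$ vanishes); for the part on $|x|\ge1$ I use that $\widehat f$ is bounded (since $f\in L^1$). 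The remaining work is to integrate the Gaussian factor in $u,v$ and check that the $x$-integral converges, so that the whole expression is $O(1/n)$.

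The heart of the estimate is the $du\,dv$ integral $\int_0^{e^{nt}}\int_0^{e^{nt}}\exp(-\tfrac{\alpha_1}{2}(u^{2H}+v^{2H})|x|^2)\,du\,dv$. Rescaling $u=|x|^{-1/H}\tilde u$, $v=|x|^{-1/H}\tilde v$ turns this into $|x|^{-2/H}=|x|^{-d}$ (using $Hd=2$) times a fixed convergent double integral $\big(\int_0^\infty e^{-\tfrac{\alpha_1}{2}r^{2H}}dr\big)^2$, up to harmless corrections from the finite upper limit. This produces the crucial power $|x|^{-d}$, so the full $x$-integral becomes $\int_{|x|<1}|x|^{\beta}|x|^{-d}dx+\int_{|x|\ge1}|\widehat f(x)||x|^{-d}dx$, both finite: the first by $\beta>0$ and the second by boundedness of $\widehat f$ and integrability of $|x|^{-d}$ at infinity. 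Crucially this bound is uniform in $n$, and since there is an overall factor $1/n$, the expectation tends to $0$. The main obstacle is handling the finite upper limit $e^{nt}$ in the rescaled integral, since the substitution $u=|x|^{-1/H}\tilde u$ changes the domain to $[0,|x|^{1/H}e^{nt}]$, which depends on both $x$ and $n$; I would dominate by extending to $[0,\infty)$ for an upper bound, which is all that is needed here and keeps the argument clean.
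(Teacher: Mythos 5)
There is a genuine gap at the central step. After writing $F_n(t)-G_n(t)$ as an oscillatory integral with kernel $g(x)\exp(-\iota x\cdot(X_u-\widetilde X_v))$, you claim
\[
\E\big|F_n(t)-G_n(t)\big|\le \frac{1}{(2\pi)^d n}\int^{e^{nt}}_0\int^{e^{nt}}_0\int_{\R^d}|g(x)|\,\Big|\E\exp\big(-\iota x\cdot(X_u-\widetilde X_v)\big)\Big|\,dx\,du\,dv,
\]
i.e.\ you pull the expectation inside the integral \emph{and} keep the absolute value outside. This interchange is not valid: $\E|Y|$ for $Y=\int h(x,u,v,\omega)\,dx\,du\,dv$ is bounded by $\int \E|h|$ (which here gives $|g(x)|\cdot 1$ and no decay, so the $du\,dv$ integral blows up like $e^{2nt}$), while $\int|\E h|$ only controls $|\E Y|$. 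Your computation therefore shows $|\E(F_n(t)-G_n(t))|=O(1/n)$, which is strictly weaker than the $L^1$ statement of the lemma and is not enough to transfer the limiting distribution from $G_n$ to $F_n$. The cancellation in the oscillatory integral must be captured at the level of $\E|\cdot|$; the paper does this by bounding $\E[|J_{n,i}(t)|^2]$ for the Fourier-side pieces, expanding the square into a double integral over $(x_1,u_1,v_1,x_2,u_2,v_2)$ and using Assumption (B) to bound the \emph{joint} variance $\Var(x_2\cdot X_{u_2}+x_1\cdot X_{u_1})$ from below by $\kappa(|x_2|^2|u_2-u_1|^{2H}+|x_1+x_2|^2u_1^{2H})$. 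Your argument never invokes Assumption (B), and the single-time identity $\Var(X^1_u)=\alpha_1u^{2H}$ cannot substitute for it in the second-moment computation. (The paper does use a first-moment/density argument, as you propose, but only for the real-space boundary term $J_{n,1}$, where the integrand is $|f|$ against the Gaussian density and no oscillation is needed; note also that the second-moment estimates force the restriction $u,v\ge 1$, since $\int_0^1\int_0^1(u^{2H}+v^{2H})^{-d/2}\,du\,dv$ diverges when $Hd=2$.)

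Two smaller points. First, your claim that $\int_{|x|\ge 1}|\widehat f(x)|\,|x|^{-d}\,dx<\infty$ "by boundedness of $\widehat f$ and integrability of $|x|^{-d}$ at infinity" is incorrect: $|x|^{-d}$ is \emph{not} integrable at infinity in $\R^d$. Finiteness holds, but via $\widehat f\in L^2$ (Plancherel, using $f\in L^1\cap L^\infty$) together with $|x|^{-d}\mathbf 1_{\{|x|\ge1\}}\in L^2$, which is essentially how the paper's $J_{n,2}$ estimate closes. Second, your rescaling computation producing the factor $|x|^{-d}$ and the bound $\int_{|x|<1}|x|^{\beta-d}\,dx<\infty$ is fine as far as it goes; it just bounds the wrong quantity.
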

\begin{proof} We first observe that
\begin{align*}
F_n(t)-G_n(t)=J_{n,1}(t)+J_{n,2}(t)+J_{n,3}(t)+J_{n,4}(t),
\end{align*}
where 
\begin{align*}
J_{n,1}(t)&=\frac{1}{n} \int_{[0,e^{nt}]^2-[1,e^{nt}]^2} f(X_u-\widetilde{X}_v)\, du\, dv,\\
J_{n,2}(t)&=\frac{1}{(2\pi)^d n}\int^{e^{nt}}_1\int^{e^{nt}}_1\int_{|x|\geq 1} \widehat{f}(x)\, \exp\left(-\iota x\cdot (X_u-\widetilde{X}_v) \right)\, dx\, du\, dv,\\
J_{n,3}(t)&=\frac{1}{(2\pi)^d n}\int^{e^{nt}}_1\int^{e^{nt}}_1\int_{|x|<1} \left(\widehat{f}(x)-\widehat{f}(0)\right) \exp\left(-\iota x\cdot (X_u-\widetilde{X}_v) \right)\, dx\, du\, dv,\\
J_{n,4}(t)&=-\frac{\widehat{f}(0)}{(2\pi)^d n}\int_{[0,e^{nt}]^2-[1,e^{nt}]^2}\int_{|x|<1} \exp\left(-\iota x\cdot (X_u-\widetilde{X}_v) \right)\, dx\, du\, dv.
\end{align*}

Since $f$ is bounded and integrable,
\begin{align*}
\E[|J_{n,1}(t)|]
&\leq \frac{1}{n} \sup_{x\in\R^d}|f(x)|  \int^1_0\int^1_0 du\, dv \\
&\qquad+\frac{1}{n} \Big(\int_{\R^d} |f(x)|\, dx\Big) \int^1_0\int^{e^{nt}}_1 (\alpha_1 u^{2H}+\alpha_1 v^{2H})^{-\frac{d}{2}}\, du\, dv \\
&\leq \frac{1}{n}\bigg(\sup_{x\in\R^d}|f(x)|+\int_{\R^d} |f(x)|\, dx\bigg).
\end{align*}
Now it suffices to show 
\[
\lim_{n\to\infty}\E[|J_{n,i}(t)|^2]=0
\]
for $i=2,3,4$.

When $i=2$,
\begin{align*}
&\E[|J_{n,2}(t)|^2]\\
&\leq \frac{1}{n^2} \int_{|x_1|\geq 1} \int_{|x_2|\geq 1}|\widehat{f}(x_1)\widehat{f}(x_2)|\\
&\qquad\times \bigg(\int_{[1,e^{nt}]^2}\exp\Big(-\frac{1}{2}\Var\left( x_2\cdot X_{u_2}+x_1\cdot X_{u_1}\right) \Big) \, du \bigg)^2\, dx\\
&\leq \frac{4}{n^2}\int_{[1,e^{nt}]^4} \int_{|x_1|\geq 1} \int_{|x_2|\geq 1}|\widehat{f}(x_1)|\widehat{f}(x_2)|1_{\{u_1\leq u_2,v_1\leq v_2\}}\\
&\qquad\times \exp\left(-\frac{1}{2}\Var\left( x_2\cdot X_{u_2}+x_1\cdot X_{u_1}\right)-\frac{1}{2}\Var\left( x_2\cdot X_{v_2}+x_1\cdot X_{v_1}\right) \right) \, dx\, du\, dv,
\end{align*}
where in the last inequality we use the Cauchy-Schwarz inequality.

By Assumption (B),
\begin{align*}
\E[|J_{n,2}(t)|^2]
&\leq  \frac{c_1}{n^2} \int_{[1,e^{nt}]^4} \int_{|x_1|\geq 1} \int_{|x_2|\geq 1}|\widehat{f}(x_2)|\, \exp\left(-\frac{\kappa}{2}\left(|x_2|^2(|u_2-u_1|^{2H}+|v_2-v_1|^{2H})\right)\right)\\
&\qquad \times \exp\left(-\frac{\kappa}{2}\left(|x_1+x_2|^2(u^{2H}_1+v^{2H}_1)\right)\right) \, dx\, du\, dv\\
&\leq \frac{c_2}{n^2}\bigg(\int_{|x_2|\geq 1}|\widehat{f}(x_2)||x_2|^{-d}\, dx_2\bigg) \bigg(\int_{[1,e^{nt}]^2} (u^{2H}_1+v^{2H}_1)^{-\frac{d}{2}}\, du_1\, dv_1\bigg)\\
&\leq \frac{c_3}{n},
\end{align*}
where the second inequality follows from integrating w.r.t to $x_1, u_2, v_2$ and Lemma \ref{lemma0}, and the last inequality is due to Lemma \ref{lemma1}.

When $i=3$, using inequality $|\widehat{f}(x)-\widehat{f}(0)|<c_{\beta} |x|^{\beta}$, Assumption (B) and Lemmas \ref{lemma0} and \ref{lemma1}, we can obtain that
\begin{align*}
\E[|J_{n,3}(t)|^2]
&\leq \frac{c_4}{n^2}\int_{[1,e^{nt}]^4} \int_{|x_1|<1} \int_{|x_2|<1}|x_1|^{\beta}|x_2|^{\beta}1_{\{u_1\leq u_2,v_1\leq v_2\}}\\
&\qquad\times \exp\left(-\frac{1}{2}\Var( x_2\cdot (X_{u_2}-\widetilde{X}_{v_2})+x_1\cdot (X_{u_1}-\widetilde{X}_{v_1})) \right)\, dx\, du\, dv\\
&\leq \frac{c_4}{n^2} \int_{[1,e^{nt}]^4} \int_{|x_1|<1} \int_{|x_2|<1}|x_2|^{\beta}\exp\left(-\frac{\kappa}{2}\left(|x_2|^2(|u_2-u_1|^{2H}+|v_2-v_1|^{2H})\right)\right)\\
&\qquad \times \exp\left(-\frac{\kappa}{2}\left(|x_1+x_2|^2(u_1^{2H}+v_1^{2H})\right)\right) \, dx\, du\, dv\\
&\leq \frac{c_5}{n^2} \bigg(\int_{|x_2|<1}|x_2|^{\beta-d} \, dx_2\bigg) \bigg(\int_{[1,e^{nt}]^2} (u_1^{2H}+v_1^{2H})^{-\frac{d}{2}}\, du_1\, dv_1\bigg)\\
&\leq \frac{c_6}{n}.
\end{align*}

When $i=4$, using similar arguments as $i=2$ and $i=3$, we can get
\begin{align*}
\E[|J_{n,4}(t)|^2]
&\leq \frac{c_7}{n^2}\int_{[0,1]^2\times [1,e^{nt}]^2} \int_{|x_1|<1} \int_{|x_2|<1} 1_{\{u_1\leq u_2,v_1\leq v_2\}}\\
&\qquad\times \exp\left(-\frac{1}{2}\Var( x_2\cdot (X_{u_2}-\widetilde{X}_{v_2})+x_1\cdot (X_{u_1}-\widetilde{X}_{v_1})) \right)\, dx\, du\, dv\\
&\leq \frac{c_7}{n^2} \int_{[0,1]^2\times[1,e^{nt}]^2} \int_{|x_1|<1} \int_{|x_2|<1} \exp\left(-\frac{\kappa}{2}\left(|x_2|^2((u_2-u_1)^{2H}+(v_2-v_1)^{2H})\right)\right)\\
&\qquad \times \exp\left(-\frac{\kappa}{2}\left(|x_1+x_2|^2(u_1^{2H}+v_1^{2H})\right)\right) \, dx\, du\, dv\\
&\leq \frac{c_8}{n^2}  \int_{|x|<1}  \bigg(\int^{e^{nt}}_0\exp\left(-\frac{\kappa}{2} |x|^2u^{2H}\right) du\bigg)^2 dx\\
&\leq \frac{c_9}{n}.
\end{align*}
This concludes the proof.
\end{proof}

For the simplicity of notation, we set 
\begin{equation}\label{gnt'}
\overline{G}_n(t)=\frac{1}{n}  \int^{e^{nt}}_0\int^{e^{nt}}_0 \int_{B(0,1)} \exp\left(-\iota x\cdot (X_u-\widetilde{X}_v)\right)\, dx \, du\, dv.
\end{equation}
Note that 
\begin{align} \label{G}
G_n(t)=\frac{\widehat{f}(0)}{(2\pi)^d}\, \overline{G}_n(t).
\end{align}
So the limiting distribution of $G_n(t)$ can be easily deduced from that of $\overline{G}_n(t)$.

We next give the limiting distribution of $\overline{G}_n(t)$.
\begin{proposition}  \label{prop}  Assume  the same conditions as in Theorem \ref{thm1}. Then, for any $t>0$,
\begin{equation*}  \label{e1}
\overline{G}_n(t)\overset{\mathcal{L}}{\longrightarrow} (\frac{2\pi}{\alpha_2})^{\frac{d}{2}}\, \frac{d}{4}\, B(\frac{d}{4},\frac{d}{4})\, t\, \, Z_{(\frac{\alpha_2}{\alpha_1})^{\frac{d}{4}}}\, \widetilde{Z}_{(\frac{\alpha_2}{\alpha_1})^{\frac{d}{4}}}\, N^2
\end{equation*}
as $n$ tends to infinity, where $B(\cdot,\cdot)$ is the Beta function, $Z_{\lambda}$ is a positive random variable with parameter $\lambda>0$ and $\E[Z^m_{\lambda}]=\frac{\Gamma(m+\lambda)}{m!\Gamma(\lambda)}$ for all $m\in\N$,  $\widetilde{Z}_{\lambda}$ is an independent copy of $Z_{\lambda}$, $N$ is a real-valued standard normal random variable independent of  $Z_{\lambda}$ and $\widetilde{Z}_{\lambda}$.
\end{proposition}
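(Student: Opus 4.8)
The plan is to argue by the method of moments. Write $\lambda=(\alpha_2/\alpha_1)^{d/4}$ and let $R$ denote the random variable on the right-hand side of the asserted convergence. Since the law of $Z_\lambda$ is determined by its moments and $N^2$ is a (moment-determinate) $\chi^2$ variable, the product $R$ is itself moment-determinate, Carleman's condition being easily checked (see Remark \ref{remark0} and \cite{durrett}). Hence it suffices to prove, for every $m\in\N$,
\[
\E\big[\overline G_n(t)^m\big]\longrightarrow \Big(\big(\tfrac{2\pi}{\alpha_2}\big)^{d/2}\tfrac d4 B\big(\tfrac d4,\tfrac d4\big)\,t\Big)^m\,\frac{(2m)!}{2^m m!}\,\Big(\frac{\Gamma(m+\lambda)}{m!\,\Gamma(\lambda)}\Big)^2,
\]
the right-hand side being exactly $\E[R^m]=[(\tfrac{2\pi}{\alpha_2})^{d/2}\tfrac d4 B(\tfrac d4,\tfrac d4)t]^m\,\E[Z_\lambda^m]\,\E[\widetilde Z_\lambda^m]\,\E[N^{2m}]$.

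First I would make the moment explicit. Expanding the $m$-th power, taking expectation, and using that $X$ and $\widetilde X$ are independent centered Gaussian so that the joint characteristic function splits, one obtains
\[
\E\big[\overline G_n(t)^m\big]=\frac{1}{n^m}\int_{B(0,1)^m}\Phi_n(x)^2\,dx,\qquad \Phi_n(x)=\int_{[0,e^{nt}]^m}\exp\Big(-\tfrac12\Var\big(\textstyle\sum_{j=1}^m x_j\cdot X_{u_j}\big)\Big)\,du .
\]
Here the square arises because, for fixed $x=(x_1,\dots,x_m)$, the $u$- and $v$-integrals factorize and $\widetilde X$ has the same law as $X$. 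The problem is thereby reduced to the asymptotics of the single functional $\Phi_n$ and of the $x$-integral of its square, in the same spirit as the first-order estimate already used in Lemma \ref{lemma3.2}.

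To analyse $\Phi_n$ I would order the time variables and apply the chaining argument of \cite{nx1}: writing $\sum_j x_j\cdot X_{u_j}$ through the increments $X_{u_{(i)}}-X_{u_{(i-1)}}$ with partial-sum coefficients $y_i=\sum_{k\ge i}x_{(k)}$, Assumption (B) yields the lower bound $\Var(\cdot)\ge\kappa\sum_i|y_i|^2(u_{(i)}-u_{(i-1)})^{2H}$, which simultaneously guarantees integrability and forces the negligibility of non-generic regions, while (C1) ensures that cross terms between increments on incomparable intervals vanish in the limit, so the quadratic form decouples across scales. The key structural point is that two types of increment survive: an increment on an interval much longer than its left endpoint falls in the (A1) regime and contributes a factor governed by $\alpha_1$, whereas a short increment far from the origin falls in the (A2) regime and contributes a factor governed by $\alpha_2$. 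Since $Hd=2$, after the substitution $u=e^{s}$ each surviving scale is scale-invariant, and the resulting one-dimensional integral — the very one evaluated for $m=1$ — produces the constant $\tfrac d4 B(\tfrac d4,\tfrac d4)$ together with precisely one logarithmic factor $\sim nt$ per scale, which is what absorbs the normalization $n^{-m}$.

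The crux, and the step I expect to be the main obstacle, is converting this picture into the exact limiting moments. The competition between widely separated scales (weight $\alpha_1$) and close pairs of times (weight $\alpha_2$) must be summed over all the ways the $m$ scales organize; each element then carries the weight $(\alpha_2/\alpha_1)^{d/4}=\lambda$, and the generating identity $\sum_m\frac{\Gamma(m+\lambda)}{m!\,\Gamma(\lambda)}z^m=(1-z)^{-\lambda}$ delivers the moment $\Gamma(m+\lambda)/(m!\Gamma(\lambda))$ of $Z_\lambda$ from each of the two $\Phi_n$ factors — hence the squared factor in $\E[R^m]$. The remaining factor $\frac{(2m)!}{2^m m!}=\E[N^{2m}]$ comes from the pairing technique: in $\int_{B(0,1)^m}\Phi_n(x)^2\,dx$ the scale-ordering generated by the first $\Phi_n$ must be matched against that generated by the second through the shared Fourier variables $x$, and summing over the admissible matchings produces the perfect-matching count $(2m-1)!!$. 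The delicate work is to isolate this leading term with its exact constant while showing that all competing configurations — comparable consecutive increments, non-negligible (C1) cross terms, and the coordinate neighbourhoods $|x_j|\lesssim e^{-ntH}$ lying outside the logarithmic band — contribute only $o(n^m)$; these bounds rest on (B) together with Lemmas \ref{lemma0} and \ref{lemma1}. Once convergence of all moments is established, moment-determinacy of $R$ upgrades it to convergence in law, which is the assertion of the proposition.
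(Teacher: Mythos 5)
Your overall architecture coincides with the paper's: the exact identity $\E[\overline G_n(t)^m]=n^{-m}\int_{B(0,1)^m}\Phi_n(x)^2\,dx$ is precisely the paper's starting point, and the localization, via Assumption (B), Cauchy--Schwarz and (C1), to configurations of pairwise incomparable increment lengths is Steps 1--2 of the paper's proof. However, the two steps you yourself identify as the crux are asserted rather than carried out, and the one concrete tool you name for the first of them is not the identity that is needed. To extract $\E[Z_\lambda^m]=\Gamma(m+\lambda)/(m!\,\Gamma(\lambda))$ one must sum over the $m!$ relative orderings $\sigma$ of the increment lengths $\Delta u_1,\dots,\Delta u_m$, each increment contributing a weight $\alpha_1^{-d/4}$ or $\alpha_2^{-d/4}$ according to whether it lies in the (A1) regime ($\Delta u_i\ge\gamma u_{i-1}$) or the (A2) regime ($\Delta u_i\le u_{i-1}/\gamma$); the sum evaluates to $\alpha_2^{-md/4}\prod_{i=1}^m(\lambda+i-1)$ by the permutation-statistic identity $\sum_{\sigma\in\mathscr{P}_m}A^{m-|\sigma|}=\prod_{i=1}^m\big(A+(i-1)\big)$ of Lemma \ref{lemma4}, where $|\sigma|$ counts the (A2)-type increments. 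The negative-binomial series $\sum_m\frac{\Gamma(m+\lambda)}{m!\Gamma(\lambda)}z^m=(1-z)^{-\lambda}$ that you invoke records the answer but provides no mechanism for passing from the sum over scale configurations to the individual moment; you would also need the decomposition into the ordered cells $\overline{O}_{m,\gamma}(b)$ to guarantee that on each cell every increment falls unambiguously into exactly one regime, together with the $\gamma\to\infty$ pinching of the bounds built from $\overline\alpha_i(\gamma)$ and $\underline\alpha_i(\gamma)$.

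The second gap concerns the interaction of the two time orderings inside $\Phi_n(x)^2$. After ordering $u_1<\dots<u_m$ and $v_1<\dots<v_m$ separately, the coefficient attached to the $i$-th $v$-increment is $\sum_{j\in A_i^\sigma}y_j$ with $A_i^\sigma$ a symmetric difference of index sets, not one of the $y_j$ themselves. Your claim that "summing over the admissible matchings produces $(2m-1)!!$" presupposes that this coefficient can be replaced by $\sup_{j\in A_i^\sigma}|y_j|$ up to a factor $1\pm m/\gamma$, which holds only after a further restriction of the $x$-integral to the set $B_\gamma^m(0,a)$ of \eqref{bgm}, where the moduli $|y_j|$ are pairwise $\gamma$-incomparable (inequality \eqref{iodd}), together with a proof that the complement is negligible. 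Only then does the moment reduce to the functional $\R^n_m$ of Step 4, whose limit $\big(\frac{2t\,\pi^{d/2}\Gamma^2(\frac{d+4}{4})}{\Gamma(\frac{d+2}{2})}\big)^m(2m-1)!!$ is supplied by Lemma 3.4 of \cite{bx}. As it stands the proposal correctly predicts the limiting moments but does not establish them.
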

\begin{proof}  The proof is split into five steps for easier reading.

\noindent \textbf{Step 1.} We first show tightness. Let $\I^n_m$ be the $m$-th moment of $\overline{G}_n(t)$. Then
\begin{align*}
\I^n_m
&=\frac{1}{n^m} \int_{B^m(0,1)} \bigg(\int_{[0,e^{nt}]^m} \exp\Big(-\frac{1}{2}\Var\big(\sum\limits^m_{i=1}x_i\cdot X_{u_i}\big)\Big)\, du\bigg)^2\, dx. 
\end{align*}

Define
\[
I_n(x)=\int_{D_m} \exp\bigg(-\frac{1}{2}\Var\Big(\sum^m_{i=1} x_i\cdot X_{u_i}\Big)\bigg)\, du
\]
and
\[
I^{\sigma}_n(x)=\int_{D_m} \exp\bigg(-\frac{1}{2}\Var\Big(\sum^m_{i=1} x_{\sigma(i)}\cdot X_{u_i}\Big)\bigg)\, du
\]
for any $\sigma\in\mathscr{P}_m$, where $\mathscr{P}_m$ is the set consisting of all permutations of $\{1,2,\cdots,m\}$ and 
\[
D_{m}=\left\{0<u_1<\cdots<u_m<e^{nt}\right\}.
\] 
Then
\begin{align*}
\I^n_m
&=\frac{m!}{n^m} \sum_{\sigma\in\mathscr{P}_m}\int_{B^m(0,1)} I_n(x)\, I^{\sigma}_n(x)\,dx.
\end{align*}
Applying Cauchy-Schwarz inequality and Assumption (B),
\begin{align*}
\I^n_m
&\leq \frac{m!}{n^m} \sum_{\sigma\in \mathscr{P}_m} \Big(\int_{B^m(0,1)} \big(I_n(x)\big)^2\,dx  \Big)^{1/2}\Big(\int_{B^m(0,1)} \big(I^{\sigma}_n(x)\big)^2  dx  \Big)^{1/2}\\
&= \frac{(m!)^2}{n^m} \int_{B^m(0,1)} \left(I_n(x)\right)^2\,dx \\
&\leq \frac{(m!)^2}{n^m} \int_{B^m(0,1)} \Bigg(\int_{D_m} \exp\bigg(-\frac{\kappa}{2}\Big(\sum\limits^m_{i=1} |\sum\limits^m_{j=i}x_j|^2 (u_i-u_{i-1})^{2H} \Big)\bigg) du\Bigg)^2 dx.
\end{align*}

For $i=1,2\cdots,m$, we make the change of variables 
\begin{align} \label{change}
y_i=\sum\limits^m_{j=i}x_j\;\; \text{and}\;\; w_i=u_i-u_{i-1} 
\end{align}
with the convention $u_0=0$ and then obtain
\begin{align} \label{moment}  \nonumber
\I^n_m
&\leq \frac{(m!)^2}{n^m} \int_{B^m(0,m)} \Bigg(\int_{[0,e^{nt}]^{m}}  \exp\bigg(-\frac{\kappa}{2}\Big(\sum\limits^m_{i=1} |y_i|^2 w_i^{2H}\Big)\bigg) dw\Bigg)^2 dy\\ \nonumber
&= (m!)^2\left(\frac{1}{n}\int_{|y_1|<me^{nHt}}\left(\int^1_0  \exp\left(-\frac{\kappa}{2}  |y_1|^2 w_1^{2H} \right) dw_1\right)^2\, dy_1\right)^m\\ \nonumber
&\leq (m!)^2\left(\frac{c_1}{n}\int_{|y_1|<me^{nHt}} \left(1\wedge  |y_1|^{-d} \right)\, dy_1\right)^m\\ 
&\leq c_{m,H,t},
\end{align}
where the second inequality follows from Lemma \ref{lemma0}, and $c_{m,H,t}$ is a finite positive constant depending only on $m$, $H$ and $t$.

\medskip
\noindent \textbf{Step 2.} We show that $\I^n_m$ is asymptotically equal to $\I^n_{m,\gamma}$ defined in (\ref{s3gamma}) as $n\to\infty$. 

For any positive constant $\gamma>1$, let
\[
I_{n,\gamma}(x)=\int_{D_{m,\gamma}} \exp\bigg(-\frac{1}{2}\Var\Big(\sum^m_{i=1} x_i\cdot X_{u_i}\Big)\bigg)\, du
\]
and
\[
I^{\sigma}_{n,\gamma} (x)=\int_{D_{m,\gamma}} \exp\bigg(-\frac{1}{2}\Var\Big(\sum^m_{i=1} x_{\sigma(i)}\cdot X_{u_i}\Big)\bigg)\, du,
\]
where 
 \begin{equation*}\label{dmg}
D_{m,\gamma}=D_m-\bigcup_{1\leq k\neq \ell\leq m}\bigg\{\Delta u_\ell/\gamma<\Delta u_k<\gamma \Delta u_\ell\bigg\}
\end{equation*}
and $\Delta u_k=u_k-u_{k-1}$ with the convention $u_0=0$.  

Set
\begin{align} \label{s3gamma}
\I^n_{m,\gamma}
&=\frac{m!}{n^m} \sum_{\sigma\in \mathscr{P}_m}\int_{B^m(0,1)} I_{n,\gamma}(x) \, I^{\sigma}_{n,\gamma} (x)\, dx.
\end{align}
Then
\begin{align*}
\I^n_m-\I^n_{m,\gamma}
&=\frac{m!}{n^m} \sum_{\sigma\in \mathscr{P}_m}\int_{B^m(0,1)} \left[ \left(I_n(x)-I_{n,\gamma}(x)\right)I^{\sigma}_n(x)+ \left(I^{\sigma}_n(x)-I^{\sigma}_{n,\gamma}(x)\right)I_{n,\gamma}(x)\right] dx\\
&\leq \frac{2m!}{n^m} \sum_{\sigma\in \mathscr{P}_m}\int_{B^m(0,1)} \left[ \left(I_n(x)-I_{n,\gamma}(x)\right)I^{\sigma}_n(x)\right] dx.
\end{align*}

Using Cauchy-Schwarz inequality and then inequality (\ref{moment}), 
\begin{align} \label{eq1-1}
0\leq \I^n_m-\I^n_{m,\gamma}
&\leq c_1 \bigg(\frac{1}{n^m}\int_{B^m(0,1)}  \left(I_n(x)-I_{n,\gamma}(x)\right)^2\, dx\bigg)^{1/2}.
\end{align}
Note that
\begin{align} \label{eq1-2}  \nonumber
&\int_{B^m(0,1)}  \left(I_n(x)-I_{n,\gamma}(x)\right)^2\, dx\\  \nonumber
&= \int_{B^m(0,1)} \Bigg(\int_{D_m-D_{m,\gamma}} \exp\bigg(-\frac{1}{2}\Var\Big(\sum\limits^m_{i=1} \big(\sum\limits^m_{j=i}x_j\big)\cdot \big(X_{u_i}-X_{u_{i-1}}\big)\Big)\bigg) du\Bigg)^2 dx\\
&\leq   \int_{B^m(0,m)} \bigg(\int_{D_m-D_{m,\gamma}} \exp\Big(-\frac{\kappa}{2} \sum\limits^m_{i=1} |y_j|^2 w_i^{2H}  \Big) dw\bigg)^2 dy,
\end{align}
where in the last inequality we use the change of variables in (\ref{change}) and Assumption (B).  

Noting that 
\[
D_m-D_{m,\gamma}=D_m\bigcap \Big(\bigcup_{1\le k\neq \ell \le m}\big\{w_\ell/\gamma<w_k<\gamma w_\ell\big\}\Big),
\] 
we obtain by Lemmas \ref{lemma0} and \ref{lemma1},
\begin{align}  \label{eq1-3}   \nonumber
&\int_{B^m(0,1)}  \left(I_n(x)-I_{n,\gamma}(x)\right)^2\, dx\\   \nonumber
&\leq c_2\sum_{1\leq k\neq \ell\leq m} n^{m-2}\int_{|y_k|<m}\int_{|y_\ell|<m} \bigg(\int^{e^{nt}}_0 \int^{\gamma w_\ell}_{w_\ell/\gamma} \exp\left(-\frac{\kappa}{2}\left(|y_k|^2 w_k^{2H}+|y_\ell|^2 w_\ell^{2H}\right)\right) dw_k\, dw_\ell\bigg)^2 dy_k\, dy_{\ell}\\   \nonumber
&= c_2\sum_{1\leq k\neq \ell\leq m} n^{m-2}\int_{|y_k|<m}\int_{|y_\ell|<m} \int^{e^{nt}}_0\int^{e^{nt}}_0\int^{\gamma w_\ell}_{w_\ell/\gamma}\int^{\gamma\tau_\ell}_{\tau_\ell/\gamma}\\  \nonumber
&\quad \times \exp\left(-\frac{\kappa}{2}\left(|y_k|^2\left(w_k^{2H}+\tau_k^{2H}\right)+|y_\ell|^2\left(w_\ell^{2H}+\tau_\ell^{2H}\right)\right)\right) dw_k\, d\tau_k\, dw_\ell\, d\tau_\ell \, dy_k\, dy_{\ell}\\   \nonumber
&\leq c_3\sum_{1\leq k\neq \ell\leq m} n^{m-2} \int^{e^{nt}}_0\int^{e^{nt}}_0\int^{\gamma w_\ell}_{w_\ell/\gamma}\int^{\gamma\tau_\ell}_{\tau_\ell/\gamma} \left(w_k^{2H}+\tau_k^{2H}\right)^{-\frac{d}{2}} \Big(1\wedge \left(w_\ell^{2H}+\tau_\ell^{2H}\right)^{-\frac{d}{2}}\Big)  dw_k\,  d\tau_k\, dw_\ell\, d\tau_{\ell}\\   \nonumber
&\leq c_4\sum^m_{\ell=1}\,  (\ln \gamma)\,  n^{m-2} \int^{e^{nt}}_0\int^{e^{nt}}_0 \Big(1\wedge \left(w_\ell^{2H}+\tau_\ell^{2H}\right)^{-\frac{d}{2}}\Big)  \, dw_\ell\, d\tau_{\ell}\\ 
&\leq c_5\, (\ln \gamma)\, n^{m-1},
\end{align}
where $n^{m-2}$ is from the estimation 
\[
\left(\int_{|y|<m} \int_0^{e^{nt}}\int_0^{e^{nt}} \exp(-\frac\kappa2|y|^2(u^{2H}+v^{2H})) dudv dy \right)^{m-2}\le c_6\,n^{m-2}
\] 
and $\ln \gamma$ is due to Lemma \ref{lemma0'}.

Combining inequalities (\ref{eq1-1}), (\ref{eq1-2}) and (\ref{eq1-3}) gives
\begin{align}  \label{inm}
0\leq \I^n_m-\I^n_{m,\gamma}\leq c_7\, \sqrt{\frac{\ln \gamma}{n}} .
\end{align}

\medskip

\noindent \textbf{Step 3.} We establish the relationships  among  $\I^n_{m,\gamma}$, $\overline{\I}^n_{m,\gamma}(a, b)$ in (\ref{inmgb}), $\J_m^n(a_1, a_2)$ in (\ref{jnmga0}), $\J_{m, \gamma, 1}^n(a_1, a_2)$ in (\ref{jnmga}) and $\J_{m, \gamma,2}^n(a_1, a_2)$ in (\ref{jnmga2}), which are given in (\ref{jm1})-(\ref{jm2}) and (\ref{eq3-1})-(\ref{eq3-3}).

For any $a_1, a_2>0$, define
\begin{align}   \label{jnmga0}
\J^n_{m}(a_1,a_2)
&=\frac{m!}{n^m} \sum_{\sigma\in \mathscr{P}_m}\int_{B^m(0,a_1)} \int_{[0,a_2e^{nt}]^{2m}} \nn\\
&\quad \times \exp\bigg(-\sum\limits^m_{i=1} |y_i|^2u_i^{2H}-\sum\limits^m_{i=1}\Big|\sum\limits^m_{j=i} y_{\sigma(j)}-y_{\sigma(j)+1}\Big|^2 v_i^{2H}\Big) du\, dv\, dy,
\end{align}
\begin{align} \label{jnmga}
\J^n_{m,\gamma,1}(a_1,a_2)
&=\frac{m!}{n^m} \sum_{\sigma\in \mathscr{P}_m}\int_{B^m(0,a_1)} \int_{[0,a_2e^{nt}]^{2m}-O_{m,\gamma}} \nn \\
&\quad \times \exp\bigg(-\sum\limits^m_{i=1} |y_i|^2 u_i^{2H}-\sum\limits^m_{i=1}\Big|\sum\limits^m_{j=i} y_{\sigma(j)}-y_{\sigma(j)+1}\Big|^2 v_i^{2H}\bigg) du\, dv\, dy
\end{align}
and 
\begin{align} \label{jnmga2}
\J^n_{m,\gamma,2}(a_1,a_2)
&=\frac{m!}{n^m} \sum_{\sigma\in \mathscr{P}_m}\int_{B^m_{\gamma}(0,a_1)} \int_{[0,a_2e^{nt}]^{2m}} \nn\\
&\quad \times \exp\bigg(-\sum\limits^m_{i=1} |y_i|^2u_i^{2H}-\sum\limits^m_{i=1}\Big|\sum\limits^m_{j=i} y_{\sigma(j)}-y_{\sigma(j)+1}\Big|^2 v_i^{2H}\bigg) du\, dv\, dy,
\end{align}
where 
\[
O_{m,\gamma}=\bigcup_{1\leq k\neq \ell\leq m}\Big\{u_\ell/\gamma<u_k<\gamma u_\ell\; \text{or}\;  v_\ell/\gamma<  v_k<\gamma  v_\ell \Big\}
\] 
and 
\begin{equation}\label{bgm}
B^m_{\gamma}(0,a_1)=\Big\{y_i\in\R^d: |y_i|<a_1, i=1,2,\cdots,m\Big\}-\bigcup_{1\leq i\neq j\leq m}\Big\{|y_j|/\gamma<|y_i|<\gamma |y_j|\Big\}.
\end{equation}

Using similar arguments as in obtaining (\ref{inm}), we get
\begin{align} \label{jm1}
0\leq \J^n_{m}(a_1,a_2)-\J^n_{m,\gamma,1}(a_1,a_2)\leq c_8  \sqrt{\frac{\ln \gamma}{n}}
\end{align}
and
\begin{align} \label{jm2}
0\leq \J^n_{m}(a_1,a_2)-\J^n_{m,\gamma,2}(a_1,a_2)\leq c_9  \sqrt{\frac{\ln \gamma}{n}}.
\end{align}

For any $a, b>0$ and $\sigma_1,\sigma_2\in\mathscr{P}_m$, we define
\begin{align} \label{inmgb}
\overline{\I}^n_{m,\gamma}(a, b)
&=\frac{m!}{n^m}\sum_{\sigma\in \mathscr{P}_m} \int_{B^m(0,a)} \int_{D_{m,\gamma}\times D_{m,\gamma}}  \exp\bigg(-b\sum\limits^m_{i=1} |y_i|^2\E(X^1_{r_i}-X^1_{r_{i-1}})^2\bigg) \nn\\
&\qquad\qquad\qquad \times \exp\bigg(-b\sum\limits^m_{i=1}\Big|\sum\limits^m_{j=i} y_{\sigma(j)}-y_{\sigma(j)+1}\Big|^2 \E(X^1_{s_i}-X^1_{s_{i-1}})^2\bigg) dr\, ds\, dy,
\end{align}
\begin{align} \label{inmgbs'}
\overline{\I}_{m,\gamma}^n(a, b,\sigma_1,\sigma_2)=&\frac{m!}{n^m}\sum_{\sigma\in \mathscr{P}_m} \int_{B^m(0,a)} \int_{\widehat{O}_{m,\gamma}^{\sigma_1}\times \widehat{O}_{m,\gamma}^{\sigma_2}}  \exp\bigg(-b\sum\limits^m_{i=1} |y_i|^2\E(X^1_{r_i}-X^1_{r_{i-1}})^2\bigg) \nn\\
&\qquad \times \exp\bigg(-b\sum\limits^m_{i=1}\Big|\sum\limits^m_{j=i} y_{\sigma(j)}-y_{\sigma(j)+1}\Big|^2 \E(X^1_{s_i}-X^1_{s_{i-1}})^2\bigg) dr\, ds\, dy
\end{align}
and
\begin{align*} \label{inmgbs}
\overline{\I}^{n,\sigma_1,\sigma_2}_{m,\gamma}(a,b)
&=\frac{m!}{n^m} \sum_{\sigma\in \mathscr{P}_m}\int_{B^m(0,a)} \int_{\overline{O}_{m,\gamma}(b)\times \overline{O}_{m,\gamma}(b)} \exp\bigg(-\sum\limits^m_{i=1} |y_{\sigma_1(i)}|^2 u_{\sigma_1(i)}^{2H}\bigg) \nn\\
&\qquad\qquad\qquad \times \exp\bigg(-\sum\limits^m_{i=1}\Big|\sum\limits^m_{j={\sigma_2(i)}} y_{\sigma(j)}-y_{\sigma(j)+1}\Big|^2  v_{\sigma_2(i)}^{2H}\bigg) du\, dv\, dy,
\end{align*}
where 
\[
\widehat{O}^\sigma_{m,\gamma}=\left\{\sum^m_{i=1} \Delta r_i<e^{nt}, 0<\Delta r_{\sigma(i)}<\frac{\Delta r_{\sigma(i+1)}}{\gamma}\, \text{with}\, \Delta r_i=r_i-r_{i-1} \text{ for }\, i=1,\cdots,m-1\right\}
\]
and
\[
\overline{O}_{m,\gamma}(b)=\left\{\sum^m_{i=1} u_i<be^{nt}, 0<u_i<\frac{u_{i+1}}{\gamma}\; \text{ for }\; i=1,\cdots,m-1\right\}.
\]

It is easy to see that
\begin{align} \label{inmgbs0}
\overline{\I}^n_{m,\gamma}(a, b)=\sum_{\sigma_1,\sigma_2\in\mathscr{P}_m} \overline{\I}_{m,\gamma}^n(a, b,\sigma_1,\sigma_2)
\end{align}
and
\begin{align} \label{inmgbs1}
\overline{\I}^{n,\sigma_1,\sigma_2}_{m,\gamma}(a,b)
&=\frac{m!}{n^m} \sum_{\sigma\in \mathscr{P}_m}\int_{B^m(0,a)} \int_{\overline{O}_{m,\gamma}(b)\times \overline{O}_{m,\gamma}(b)} \exp\bigg(-\sum\limits^m_{i=1} |y_{\sigma_1(\sigma^{-1}_2(i))}|^2 u_{\sigma_1(\sigma^{-1}_2(i))}^{2H}\bigg) \nn\\
&\qquad\qquad\qquad \times \exp\bigg(-\sum\limits^m_{i=1}\Big|\sum\limits^m_{j=i} y_{\sigma(j)}-y_{\sigma(j)+1}\Big|^2  v_i^{2H}\bigg) du\, dv\, dy\nn\\
&=\frac{m!}{n^m} \sum_{\sigma\in \mathscr{P}_m}\int_{B^m(0,a)} \int_{\overline{O}_{m,\gamma}(b)\times \overline{O}_{m,\gamma}(b)} \exp\bigg(-\sum\limits^m_{i=1} |y_i|^2 u_i^{2H}\bigg) \nn\\
&\qquad\qquad\qquad \times \exp\bigg(-\sum\limits^m_{i=1}\Big|\sum\limits^m_{j=i} y_{\sigma(j)}-y_{\sigma(j)+1}\Big|^2  v_i^{2H}\bigg) du\, dv\, dy
\end{align}
for all $\sigma_1,\sigma_2\in\mathscr{P}_m$.

Now we compare $\overline{\I}^n_{m,\gamma}(a,b,\sigma_1,\sigma_2)$ with $\overline{\I}^{n,\sigma_1, \sigma_2}_{m,\gamma}(a,b)$. We take the following notations: 
\begin{align}
&\overline{\alpha}_1(\gamma)=\alpha_1-\phi_{1,1}(\gamma), &
\underline{\alpha}_1(\gamma)=\alpha_1+\phi_{1,2}(\gamma) ,\notag\\
&\overline{\alpha}_2(\gamma)=\alpha_2-\phi_{2,1}(\gamma),& \underline{\alpha}_2(\gamma)=\alpha_2+\phi_{2,2}(\gamma),\notag\\ &\overline{\alpha}(\gamma)=\overline{\alpha}_1(\gamma)\wedge \overline{\alpha}_2(\gamma), &\underline{\alpha}(\gamma)=\underline{\alpha}_1(\gamma)\vee \underline{\alpha}_2(\gamma).\label{eq-alpha}
\end{align}

For $\gamma$ sufficiently large in comparison with $\gamma_1$ and $ \gamma_2$, we can use Assumptions (A1)-(A2) to get upper and lower bounds for $\E(X^1_{r_i}-X^1_{r_{i-1}})^2$ and $\E(X^1_{s_i}-X^1_{s_{i-1}})^2$ in $\overline{\I}^n_{m,\gamma}(a,b,\sigma_1,\sigma_2)$, which are constant multiples of $(r_i-r_{i-1})^{2H}=:u_i^{2H}$ and $(s_i-s_{i-1})^{2H}=: v_i^{2H}$, respectively. For the lower bound case, the constants in front of $u_i^{2H}$ and $v_i^{2H}$ are $\overline\alpha_1(\gamma) $ or $\overline\alpha_2(\gamma)$ depending on the ratio of $u_i/r_{i-1}$ or $ v_i/s_{i-1}$, respectively. For the upper bound case, the constants in front of $ u_i^{2H}$ and $v_i^{2H}$ are $\underline{\alpha}_1(\gamma)$ or $\underline{\alpha}_2(\gamma)$ depending on the ratio of $u_i/r_{i-1}$ or $ v_i/s_{i-1}$, respectively. 

Using lower bounds for $\E(X^1_{r_i}-X^1_{r_{i-1}})^2$ and $\E(X^1_{s_i}-X^1_{s_{i-1}})^2$  in \eqref{inmgbs'}, then applying change of variables to $u_i=r_i-r_{i-1}$ and $v_i=s_i-s_{i-1}$,  we have
\begin{align}\label{eq-3.18}
\overline{\I}^n_{m,\gamma}(a,b,\sigma_1, \sigma_2)
&\leq  b^{-\frac{md}{2}}  (\overline{\alpha}_1(\gamma))^{-\frac{d}{4}(|\sigma_1|+|\sigma_2|)}(\overline{\alpha}_2(\gamma))^{-\frac{d}{4}(m-|\sigma_1|+m-|\sigma_2|)} \overline{\I}^{n,\sigma_1,\sigma_2}_{m,\gamma}(a, (b \overline{\alpha}(\gamma))^{\frac{1}{2H}}),
\end{align}
where 
\begin{align}\label{|sigma|}
&|\sigma_1|=\#\{ u_k\leq r_{k-1}/\gamma:\,  u_{\sigma_1(i)}\leq  u_{\sigma_1(i+1)}/{\gamma}\, \text{ for }\, i=1,2,\cdots, m-1\},\nn\\
&|\sigma_2|=\#\{v_k\leq s_{k-1}/\gamma:\, v_{\sigma_2(i)}\leq  v_{\sigma_2(i+1)}/{\gamma}\, \text{ for }\, i=1,2,\cdots, m-1\}.
\end{align}

On the other hand, recalling $\J^n_{m,\gamma,1}(a_1,a_2)$ defined in (\ref{jnmga}) and noting that $\overline{O}_{m,\gamma}(b)$ is one of the $m!$ partitions of  
\[
\Big\{\sum_{i=1}^m u_i<be^{nt},\, u_i/u_j\notin (1/\gamma, \gamma) \text{ for }\; i\neq j \Big\},
\] 
we have
\begin{equation}\label{ilej}
\frac{1}{(m!)^2}\J^n_{m,\gamma,1}(a,b/m)\le \overline{\I}^{n,\sigma_1,\sigma_2}_{m,\gamma}(a,b) \le \frac{1}{(m!)^2}\J^n_{m,\gamma,1}(a,b).
\end{equation}
Next, by \eqref{inmgbs0}, \eqref{inmgbs1}, \eqref{eq-3.18} and (\ref{ilej}), we obtain
\begin{align} \label{eq3-1}
\overline{\I}^n_{m,\gamma}(a,b) &\leq \sum_{\sigma_1,\sigma_2\in\mathscr{P}_m} b^{-\frac{md}{2}}  (\overline{\alpha}_1(\gamma))^{-\frac{d}{4}(m-|\sigma_1|+m-|\sigma_2|)}(\overline{\alpha}_2(\gamma))^{-\frac{d}{4}(|\sigma_1|+|\sigma_2|)} \frac{1}{(m!)^2}\J^n_{m,\gamma,1}(a,(b \overline{\alpha}(\gamma))^{\frac{1}{2H}})\nn\\
&= (b\overline{\alpha}_2(\gamma))^{-\frac{md}{2}}  \left[ \prod^m_{i=1} \left( \left(\frac{\overline{\alpha}_2(\gamma)}{\overline{\alpha}_1(\gamma)}\right)^{\frac{d}{4}}+(i-1)\right)\right]^2  \frac{1}{(m!)^2}\J^n_{m,\gamma,1}(a,(b \overline{\alpha}(\gamma))^{\frac{1}{2H}})\nn\\
&= (b\overline{\alpha}_2(\gamma))^{-\frac{md}{2}} \left[ \frac{\Gamma(m+(\frac{\overline{\alpha}_2(\gamma)}{\overline{\alpha}_1(\gamma)})^{\frac{d}{4}})}{\Gamma((\frac{\overline{\alpha}_2(\gamma)}{\overline{\alpha}_1(\gamma)})^{\frac{d}{4}})} \right]^2  \frac{1}{(m!)^2}\J^n_{m,\gamma,1}(a,(b \overline{\alpha}(\gamma))^{\frac{1}{2H}})\nn\\
&=(b\overline{\alpha}_2(\gamma))^{-\frac{md}{2}} \left[ \frac{\Gamma(m+(\frac{\overline{\alpha}_2(\gamma)}{\overline{\alpha}_1(\gamma)})^{\frac{d}{4}})}{m!\Gamma((\frac{\overline{\alpha}_2(\gamma)}{\overline{\alpha}_1(\gamma)})^{\frac{d}{4}})} \right]^2  \J^n_{m,\gamma,1}(a, (b \overline{\alpha}(\gamma))^{\frac{1}{2H}}),
\end{align}
where the first equality follows from Lemma \ref{lemma4}.

Similarly, for the lower bound of $\overline{\I}^n_{m,\gamma}(a,b)$, we have
\begin{align} \label{eq3-2}
\overline{\I}^n_{m,\gamma}(a,b) &\geq \sum_{\sigma_1,\sigma_2\in\mathscr{P}_m} b^{-\frac{md}{2}}  (\underline{\alpha}_1(\gamma))^{-\frac{d}{4}(m-|\sigma_1|+m-|\sigma_2|)}(\underline{\alpha}_2(\gamma))^{-\frac{d}{4}(|\sigma_1|+|\sigma_2|)} \frac{1}{(m!)^2}\J^n_{m,\gamma,1}(a,(b \underline{\alpha}(\gamma))^{\frac{1}{2H}}/m)\nn\\
&= b^{-\frac{md}{2}} \left[ \prod^m_{i=1} \left( \underline{\alpha}_1(\gamma)^{-\frac{d}{4}}+(i-1)\underline{\alpha}_2(\gamma)^{-\frac{d}{4}}\right)\right]^2  \frac{1}{(m!)^2}\J^n_{m,\gamma,1}(a,(b \underline{\alpha}(\gamma))^{\frac{1}{2H}}/m)\nn\\
&= (b\underline{\alpha}_2(\gamma))^{-\frac{md}{2}} \left[ \frac{\Gamma(m+(\frac{\underline{\alpha}_2(\gamma)}{\underline{\alpha}_1(\gamma)})^{\frac{d}{4}})}{\Gamma((\frac{\underline{\alpha}_2(\gamma)}{\underline{\alpha}_1(\gamma)})^{\frac{d}{4}})} \right]^2 \frac{1}{(m!)^2} \J^n_{m,\gamma,1}(a, (b \underline{\alpha}(\gamma))^{\frac{1}{2H}}/m)\nn\\
&= (b\underline{\alpha}_2(\gamma))^{-\frac{md}{2}} \left[ \frac{\Gamma(m+(\frac{\underline{\alpha}_2(\gamma)}{\underline{\alpha}_1(\gamma)})^{\frac{d}{4}})}{m!\Gamma((\frac{\underline{\alpha}_2(\gamma)}{\underline{\alpha}_1(\gamma)})^{\frac{d}{4}})} \right]^2 \J^n_{m,\gamma,1}(a, (b \underline{\alpha}(\gamma))^{\frac{1}{2H}}/m).
\end{align}

Finally in this step, we provide the relationship between $\I_{m,\gamma}^n$ in (\ref{s3gamma}) and $\overline{\I}_{m,\gamma}^n(a,b)$ in (\ref{inmgb}).  When $\gamma$ is large enough, Assumption (C1) yields
\begin{align} \label{eq3-3}
\overline{\I}^n_{m,\gamma}(1/m,\underline{b}(\gamma))\leq \I^n_{m,\gamma}\leq \overline{\I}^n_{m,\gamma}(m,\overline{b}(\gamma)),
\end{align}
where $\overline{b}(\gamma)=\frac{1}{2}-m\beta_1(\gamma)$ and $\underline{b}(\gamma)=\frac{1}{2}+m\beta_1(\gamma)$ with $\beta_1(\gamma)$ given in Assumption (C1).

\medskip
\noindent \textbf{Step 4.}  We obtain estimates for $\I^n_{m}$.  For positive numbers $a_1, a_2, b_1$ and $b_2$, define
\begin{align*} \label{def1} \nonumber
\R^n_{m}(a_1,a_2,b_1,b_2)
&= \frac{m!}{n^m} \sum_{\sigma\in \mathscr{P}_m}\int_{B^m(0,a_1)} \int_{[0,a_2e^{nt}]^{2m}} \\
&\qquad \times \exp\bigg(-b_1\sum\limits^m_{i=1} |y_i|^2 u_i^{2H}-b_2\sum\limits^m_{i=1}\sup_{j\in A^{\sigma}_i} |y_j|^2 v_i^{2H}\bigg) du\, dv\, dy
\end{align*}
and
\begin{align*}
\R^n_{m,\gamma}(a_1,a_2,b_1,b_2)
&= \frac{m!}{n^m} \sum_{\sigma\in \mathscr{P}_m}\int_{B^m_{\gamma}(0,a_1)} \int_{[0,a_2e^{nt}]^{2m}} \\
&\qquad \times \exp\bigg(-b_1\sum\limits^m_{i=1} |y_i|^2 u_i^{2H}-b_2\sum\limits^m_{i=1}\sup_{j\in A^{\sigma}_i} |y_j|^2  v_i^{2H}\bigg) du\, dv\, dy,
\end{align*}
where 
\[
A^{\sigma}_i=\big\{\sigma(i), \cdots, \sigma(m)\big\}\Delta \big\{\sigma(i)+1, \cdots, \sigma(m)+1\big\}
\]
with $\Delta$ being the symmetric difference operator for two sets. 

Using similar arguments for obtaining (\ref{inm}), we get
\begin{align} \label{rm}
\R^n_{m}(a_1,a_2,b_1,b_2)-\R^n_{m,\gamma}(a_1,a_2,b_1,b_2)\leq c_{10}\sqrt{\frac{\ln \gamma}{n}}.
\end{align}
Note that when $\gamma$ is sufficiently large, for $(y_1,\cdots, y_m)$ in the set $B_\gamma^m(0,1)$ defined in \eqref{bgm},
\begin{equation}\label{iodd}
(1-m/\gamma)\sup_{j\in A_i^\sigma}|y_j|\le \bigg|\sum_{j=i}^m y_{\sigma(j)}-y_{\sigma(j)+1}\bigg|=\bigg|\sum_{j\in A_i^\sigma} y_j\bigg| \le (1+m/\gamma) \sup_{j\in A_i^\sigma}|y_j|.
\end{equation}

Thanks to (\ref{inm}), (\ref{jm1}), (\ref{jm2}), (\ref{eq3-1}), (\ref{eq3-2}), (\ref{eq3-3}) and \eqref{iodd}, we get that, when $\gamma$ is  sufficiently large,
\begin{align*}
\I^n_m
&\leq c_{11}  \sqrt{\frac{\ln \gamma}{n}} +(\overline{b}(\gamma)\overline{\alpha}_2(\gamma))^{-\frac{md}{2}} \left[ \frac{\Gamma(m+(\frac{\overline{\alpha}_2(\gamma)}{\overline{\alpha}_1(\gamma)})^{\frac{d}{4}})}{m! \Gamma((\frac{\overline{\alpha}_2(\gamma)}{\overline{\alpha}_1(\gamma)})^{\frac{d}{4}})} \right]^2 \J^n_{m,\gamma,2}(m,(\overline{b}(\gamma)\overline{\alpha}(\gamma))^{\frac{1}{2H}})\\
&\leq c_{11}\sqrt{\frac{\ln \gamma}{n}}+(\overline{b}(\gamma)\overline{\alpha}_2(\gamma))^{-\frac{md}{2}}\left[ \frac{\Gamma(m+(\frac{\overline{\alpha}_2(\gamma)}{\overline{\alpha}_1(\gamma)})^{\frac{d}{4}})}{m!\Gamma((\frac{\overline{\alpha}_2(\gamma)}{\overline{\alpha}_1(\gamma)})^{\frac{d}{4}})} \right]^2 \R^n_{m}(m,(\overline{b}(\gamma)\overline{\alpha}(\gamma))^{\frac{1}{2H}},1,1-m/\gamma)
\end{align*}
and
\begin{align*}
\I^n_m
&\geq -c_{12} \sqrt{\frac{\ln \gamma}{n}} +(\underline{b}(\gamma)\underline{\alpha}_2(\gamma))^{-\frac{md}{2}} \left[ \frac{\Gamma(m+(\frac{\underline{\alpha}_2(\gamma)}{\underline{\alpha}_1(\gamma)})^{\frac{d}{4}})}{m!\Gamma((\frac{\underline{\alpha}_2(\gamma)}{\underline{\alpha}_1(\gamma)})^{\frac{d}{4}})} \right]^2\J^n_{m,\gamma,2}(1/m,(\underline{b}(\gamma)\underline{\alpha}(\gamma))^{\frac{1}{2H}}/ m)\\
&\geq -c_{12} \sqrt{\frac{\ln \gamma}{n}} +(\underline{b}(\gamma)\underline{\alpha}_2(\gamma))^{-\frac{md}{2}}\left[ \frac{\Gamma(m+(\frac{\underline{\alpha}_2(\gamma)}{\underline{\alpha}_1(\gamma)})^{\frac{d}{4}})}{m!\Gamma((\frac{\underline{\alpha}_2(\gamma)}{\underline{\alpha}_1(\gamma)})^{\frac{d}{4}})} \right]^2 \R^n_{m,\gamma}(1/m,(\underline{b}(\gamma)\underline{\alpha}(\gamma))^{\frac{1}{2H}}/ m,1,1+m/\gamma)\\
&\geq -c_{13} \sqrt{\frac{\ln \gamma}{n}} +(\underline{b}(\gamma)\underline{\alpha}_2(\gamma))^{-\frac{md}{2}}\left[ \frac{\Gamma(m+(\frac{\underline{\alpha}_2(\gamma)}{\underline{\alpha}_1(\gamma)})^{\frac{d}{4}})}{m!\Gamma((\frac{\underline{\alpha}_2(\gamma)}{\underline{\alpha}_1(\gamma)})^{\frac{d}{4}})} \right]^2 \R^n_{m}(1/m,(\underline{b}(\gamma)\underline{\alpha}(\gamma))^{\frac{1}{2H}}/ m,1,1+m/\gamma),
\end{align*}
where we use (\ref{rm}) in the last inequalities.

\medskip
\noindent \textbf{Step 5.} We obtain the limit of $\I^n_{m}$, which is also the limit of the $m$-th moment of $\overline{G}_n(t)$ defined in \eqref{gnt'}. Using Lemma 3.4 in  \cite{bx}, 
\begin{align*}
\limsup_{n\to\infty}\I^n_m
&\leq \left(\frac{2}{\alpha_2}\right)^{\frac{md}{2}} \left[ \frac{\Gamma(m+(\frac{\alpha_2}{\alpha_1})^{\frac{d}{4}})}{m!\Gamma((\frac{\alpha_2}{\alpha_1})^{\frac{d}{4}})} \right]^2 \limsup_{\gamma\to\infty}\limsup_{n\to\infty} \R^n_{m}(m, (\overline{b}(\gamma)\overline{\alpha}(\gamma))^{\frac{1}{2H}},1,1-m/\gamma)\\
&=\left(\frac{2}{\alpha_2}\right)^{\frac{md}{2}} \left[ \frac{\Gamma(m+(\frac{\alpha_2}{\alpha_1})^{\frac{d}{4}})}{m!\Gamma((\frac{\alpha_2}{\alpha_1})^{\frac{d}{4}})} \right]^2\bigg(\frac{2t\, \pi^{\frac{d}{2}} \Gamma^2(\frac{d+4}{4})}{\Gamma(\frac{d+2}{2})}\bigg)^m\, (2m-1)!!\\
&=\left(\frac{2\pi}{\alpha_2}\right)^{\frac{md}{2}} \left[ \frac{\Gamma(m+(\frac{\alpha_2}{\alpha_1})^{\frac{d}{4}})}{m!\Gamma((\frac{\alpha_2}{\alpha_1})^{\frac{d}{4}})} \right]^2 \bigg(\frac{d}{4} B(\frac{d}{4},\frac{d}{4})\bigg)^m\, (2m-1)!!\, t^m
\end{align*}
and
\begin{align*}
\liminf_{n\to\infty}\I^n_m
&\geq \left(\frac{2}{\alpha_2}\right)^{\frac{md}{2}} \left[ \frac{\Gamma(m+(\frac{\alpha_2}{\alpha_1})^{\frac{d}{4}})}{m!\Gamma((\frac{\alpha_2}{\alpha_1})^{\frac{d}{4}})} \right]^2 \liminf_{\gamma\to\infty}\liminf_{n\to\infty} \R^n_{m}(1/m,(\underline{b}(\gamma)\underline{\alpha}(\gamma))^{\frac{1}{2H}}/ m,1,1+m/\gamma)\\
&=\left(\frac{2}{\alpha_2}\right)^{\frac{md}{2}} \left[ \frac{\Gamma(m+(\frac{\alpha_2}{\alpha_1})^{\frac{d}{4}})}{m!\Gamma((\frac{\alpha_2}{\alpha_1})^{\frac{d}{4}})} \right]^2 \bigg(\frac{ 2t\, \pi^{\frac{d}{2}}\Gamma^2(\frac{d+4}{4})}{\Gamma(\frac{d+2}{2})}\bigg)^m\, (2m-1)!!\\
&=\left(\frac{2\pi}{\alpha_2}\right)^{\frac{md}{2}} \left[ \frac{\Gamma(m+(\frac{\alpha_2}{\alpha_1})^{\frac{d}{4}})}{m!\Gamma((\frac{\alpha_2}{\alpha_1})^{\frac{d}{4}})} \right]^2 \bigg(\frac{d}{4} B(\frac{d}{4},\frac{d}{4})\bigg)^m\, (2m-1)!!\, t^m.
\end{align*}
Therefore, 
\[
\lim_{n\to\infty}\I^n_m
=\left(\frac{2\pi}{\alpha_2}\right)^{\frac{md}{2}} \left[ \frac{\Gamma(m+(\frac{\alpha_2}{\alpha_1})^{\frac{d}{4}})}{m!\Gamma((\frac{\alpha_2}{\alpha_1})^{\frac{d}{4}})} \right]^2 \left(\frac{d}{4} B(\frac{d}{4},\frac{d}{4})\right)^m\, (2m-1)!!\, t^m.
\]
This completes the proof.
\end{proof}

\bigskip

{\medskip \noindent \textbf{Proof of Theorem \ref{thm1}.}} The desired result follows directly from Lemmas \ref{lemma3.1} and \ref{lemma3.2},  equality (\ref{G})  and Proposition \ref{prop}.\hfill  {\scriptsize $\blacksquare$} 

\bigskip

\section{Proof of Theorem \ref{thm2}}

In this section, we will prove Theorem \ref{thm2}. To make notations simpler, we will abuse some notations from Section 3. We use $F_n(t_1,t_2)$ to denote the  left-hand side of (\ref{thmrv}), i.e., 
\[
F_n(t_1,t_2):=\frac{1}{\sqrt{n}} \int^{e^{nt_1}}_0\int^{e^{nt_2}}_0 f(X_u-\widetilde{X}_v)\, du\, dv.
\]
To obtain the limiting distribution of $F_n(t_1,t_2)$, we first show that $F_n(t_1, t_2)$ has the same limiting distribution as $F_n$ defined in (\ref{fnt}) with $t=t_1\wedge t_2$, through Lemmas \ref{lem1} and \ref{lem2}. Then, we prove that the $m$-th moment of $F_n$ is asymptotically equal to $\I^n_m$ in (\ref{Inm}) by Lemma \ref{lem3}. Finally, we obtain the limit of the $m$-th moment
\begin{align*} \label{mnt}
\lim\limits_{n\to\infty}\I^n_m=\E\left[\sqrt{ D_{f,d}\, (t_1\wedge t_2)\, N^2}\, \eta\right]^m
\end{align*}
in Proposition \ref{moments} and thus complete the proof of Theorem \ref{thm2}. 

\medskip

The following result shows that the limiting distribution of $F_n(t_1,t_2)$ depends only on $t_1\wedge t_2$. 
\begin{lemma}  \label{lem1} 
\[
\lim_{n\to\infty}\E\Big[|F_n(t_1,t_2)-F_n(t_1\wedge t_2,t_1\wedge t_2)|\Big]=0.
\]
\end{lemma}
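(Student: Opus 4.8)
The plan is to mimic the proof of Lemma \ref{lemma3.1} almost verbatim; the only new feature is the normalization $1/\sqrt n$ in place of $1/n$, and the point I would emphasize is that the crude $L^1$ bound obtained there is in fact a \emph{constant} independent of $n$, so dividing by $\sqrt n$ still suffices. Note that neither the cancellation hypothesis $\int_{\R^d} f(x)\,dx=0$ nor Assumptions (A1)--(C2) are needed for this lemma; only $Hd=2$ and the integrability of $f$ (Remark \ref{remark1}) enter.

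First I would assume without loss of generality that $t_1\le t_2$, so that $t_1\wedge t_2=t_1$ and
\[
F_n(t_1,t_2)-F_n(t_1,t_1)=\frac{1}{\sqrt n}\int_0^{e^{nt_1}}\!\!\int_{e^{nt_1}}^{e^{nt_2}} f(X_u-\widetilde X_v)\,du\,dv.
\]
Taking absolute values inside, applying Fubini's theorem, and using that $X_u-\widetilde X_v$ is a centered Gaussian vector in $\R^d$ whose i.i.d.\ components have variance $\alpha_1 u^{2H}+\alpha_1 v^{2H}$, so that its density is bounded by $(2\pi)^{-d/2}(\alpha_1 u^{2H}+\alpha_1 v^{2H})^{-d/2}$, I obtain
\[
\E\big[|f(X_u-\widetilde X_v)|\big]\le (2\pi)^{-d/2}(\alpha_1 u^{2H}+\alpha_1 v^{2H})^{-d/2}\int_{\R^d}|f(x)|\,dx.
\]

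The key estimate is then to discard the $u$-term and exploit $Hd=2$ to write $(\alpha_1 u^{2H}+\alpha_1 v^{2H})^{-d/2}\le \alpha_1^{-d/2}v^{-Hd}=\alpha_1^{-d/2}v^{-2}$. Extending the $v$-integral to $+\infty$, the double time-integral contributes only
\[
\int_0^{e^{nt_1}} du\int_{e^{nt_1}}^{+\infty} v^{-2}\,dv=e^{nt_1}\cdot e^{-nt_1}=1,
\]
so that
\[
\E\big[|F_n(t_1,t_2)-F_n(t_1,t_1)|\big]\le \frac{1}{\sqrt n}\,\frac{1}{(2\pi\alpha_1)^{d/2}}\int_{\R^d}|f(x)|\,dx\longrightarrow 0
\]
as $n\to\infty$, which is the claim.

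I do not expect any genuine obstacle here: the whole point is that the region $\{u\le e^{nt_1}<v\le e^{nt_2}\}$ separating the two iterated integrals is so far out in the variable $v$ that the uniform Gaussian density bound already integrates to an $O(1)$ quantity, and the slower normalization $1/\sqrt n$ is still more than enough to annihilate it. The only items to double-check are the harmless appeal to $f\in L^1(\R^d)$ and the elementary uniform density bound, both of which are exactly as in Lemma \ref{lemma3.1}.
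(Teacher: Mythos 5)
Your proposal is correct and is essentially the paper's own argument: the paper proves Lemma \ref{lem1} by pointing back to the proof of Lemma \ref{lemma3.1}, whose key estimate is exactly the one you reproduce — the Gaussian density bound $(2\pi)^{-d/2}(\alpha_1 u^{2H}+\alpha_1 v^{2H})^{-d/2}$ together with $Hd=2$ makes the time integral over the strip $[0,e^{nt_1}]\times[e^{nt_1},e^{nt_2}]$ an $O(1)$ quantity, so the $1/\sqrt{n}$ normalization still kills it. Your observation that the cancellation hypothesis and Assumptions (A1)--(C2) play no role here is also accurate.
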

\begin{proof} This follows easily from the proof of Lemma \ref{lemma3.1}.
\end{proof}

\begin{lemma} \label{lem2}  Let 
\[
J_{n}(t)=\frac{1}{\sqrt{n}} \int_{[0,e^{nt}]^2-[1,e^{nt}]^2} f(X_u-\widetilde{X}_v)\, du\, dv.
\] 
Then
\[
\lim_{n\to\infty}\E[|J_{n}(t)|]=0.
\]
\end{lemma}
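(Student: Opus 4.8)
The plan is to mimic the estimate already carried out for the term $J_{n,1}(t)$ in the proof of Lemma \ref{lemma3.2}, the key observation being that the \emph{unnormalized} integral over the boundary region is bounded uniformly in $n$, so that the prefactor $1/\sqrt n$ (rather than the $1/n$ used there) is still enough to force the limit to vanish. First I would pull the absolute value inside and apply Tonelli's theorem to write
\[
\E[|J_n(t)|]\le \frac{1}{\sqrt n}\int_{[0,e^{nt}]^2-[1,e^{nt}]^2}\E\big|f(X_u-\widetilde X_v)\big|\,du\,dv.
\]

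Next I would decompose the domain as $[0,e^{nt}]^2-[1,e^{nt}]^2=\{(u,v)\in[0,e^{nt}]^2:\ u<1\ \text{or}\ v<1\}$ into three pieces: the unit square $[0,1)^2$ and the two strips $[0,1)\times[1,e^{nt}]$ and $[1,e^{nt}]\times[0,1)$. On the unit square I bound $\E|f(X_u-\widetilde X_v)|\le \|f\|_\infty$, which contributes at most $\|f\|_\infty$ to the integral and conveniently sidesteps the degeneracy of the density at $u=0$ or $v=0$. On each strip, where one coordinate is bounded below by $1$, I use the Gaussian density bound already exploited in Lemma \ref{lemma3.1}: since $X$ and $\widetilde X$ are independent with $\Var(X^1_u)=\alpha_1 u^{2H}$, the density of $X_u-\widetilde X_v$ is at most $(2\pi)^{-d/2}(\alpha_1 u^{2H}+\alpha_1 v^{2H})^{-d/2}$, whence $\E|f(X_u-\widetilde X_v)|\le (2\pi)^{-d/2}\|f\|_{L^1}(\alpha_1 u^{2H}+\alpha_1 v^{2H})^{-d/2}$.

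On the strip $[0,1)\times[1,e^{nt}]$ I would drop the $u$-term and invoke the critical relation $Hd=2$ to get $\int_0^1\int_1^{e^{nt}}(\alpha_1 v^{2H})^{-d/2}\,dv\,du=\alpha_1^{-d/2}\int_1^{e^{nt}}v^{-2}\,dv\le \alpha_1^{-d/2}$, and symmetrically for the other strip. Collecting the three contributions yields $\E[|J_n(t)|]\le \frac{c}{\sqrt n}\big(\|f\|_\infty+\|f\|_{L^1}\big)\to 0$, as desired. There is no genuine obstacle here; the only point worth flagging is that the estimate really uses the critical exponent $Hd=2$, which makes $\int_1^\infty v^{-Hd}\,dv$ convergent and thereby bounds the boundary mass by a constant independent of $n$. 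It is precisely this uniform boundedness, already present in the first-order proof, that makes the weaker normalization $1/\sqrt n$ sufficient.
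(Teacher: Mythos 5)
Your proposal is correct and follows essentially the same route as the paper: the paper's proof of this lemma simply refers back to the estimate of the boundary term $J_{n,1}(t)$ in the proof of Lemma \ref{lemma3.2}, where exactly your decomposition (unit square bounded by $\|f\|_\infty$, strips bounded via the Gaussian density and $Hd=2$) shows the unnormalized integral is bounded uniformly in $n$, so the $1/\sqrt{n}$ prefactor forces the limit to vanish.
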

\begin{proof}  This follows easily from the proof of Lemma \ref{lemma3.2}.
\end{proof}

\medskip

By Lemmas \ref{lem1} and \ref{lem2}, we only need to consider the weak convergence of 
\begin{align} \label{fnt}
F_n=\frac{1}{\sqrt{n}}\int^{e^{nt}}_1\int^{e^{nt}}_1 f(X_u-\widetilde{X}_v)\, du\, dv,
\end{align}
for which we will compute the $m$-th moments of $F_n$ for all $m\in \mathbb N$. Throughout this section, we will fix the order  $m$ of the moment  and let $\mathscr P$ denote the set consisting of all permutations of $\{1,2, \dots, m\}$. 

Define
\begin{align} \label{Inm}
\I^n_m=\frac{m!}{n^{\frac{m}{2}}} \sum_{\sigma \in \mathscr{P}} \E\Bigg[ \int_{D^{n}_m}\int_{D^n_{m}} \prod^m_{i=1} f(X_{u_i}-\widetilde{X}_{v_{\sigma(i)}}) \, du\, dv\Bigg],
\end{align}
where
\begin{align} \label{dmn}
D^n_{m}=\Big\{ u\in [1,e^{nt}]^m: u_1<\cdots <u_m,\; u_{i+1}-u_i\geq n^{-m},\, i=1,2,\dots, m-1\Big\}.
\end{align}
The following lemma indicates that the $m$-th moment of $F_n$ is  asymptotically equal to $\I_m^n.$

\medskip

\begin{lemma} \label{lem3} 
\[
\lim_{n\to\infty} |\E[F_n^m]-\I^n_m|=0.
\]
\end{lemma}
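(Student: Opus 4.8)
The plan is to expand the $m$-th moment, symmetrize it into exactly the shape of $\I^n_m$, and then show that the two differ only by a near-diagonal contribution that is negligible after dividing by $n^{m/2}$. First I would write
\[
\E[F_n^m]=\frac{1}{n^{m/2}}\int_{[1,e^{nt}]^{2m}}\E\Big[\prod_{i=1}^m f(X_{u_i}-\widetilde{X}_{v_i})\Big]\,du\,dv,
\]
and use that the integrand is invariant under the diagonal action of $\mathscr{P}$ (simultaneous permutation of the pairs $(u_i,v_i)$). Sorting the $u$-coordinates produces a factor $m!$ and confines $u$ to the ordered simplex $S_m^n=\{1\le u_1<\cdots<u_m\le e^{nt}\}$; decomposing the $v$-cube according to its order type and relabelling then yields a sum over $\sigma\in\mathscr{P}$ with both $u$ and $v$ ranging over $S_m^n$, namely
\[
\E[F_n^m]=\frac{m!}{n^{m/2}}\sum_{\sigma\in\mathscr{P}}\E\Big[\int_{S_m^n}\int_{S_m^n}\prod_{i=1}^m f(X_{u_i}-\widetilde{X}_{v_{\sigma(i)}})\,du\,dv\Big].
\]
Since $\I^n_m$ is precisely this expression with $S_m^n$ replaced by the sub-simplex $D^n_m$ of \eqref{dmn}, the difference $\E[F_n^m]-\I^n_m$ is supported on the near-diagonal set where at least one consecutive gap, $u_{i+1}-u_i$ or $v_{i+1}-v_i$, is smaller than $n^{-m}$.

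Next I would estimate this near-diagonal piece. Passing to Fourier variables and using the independence of $X$ and $\widetilde{X}$ together with Assumption (B), exactly as in the proof of Lemma \ref{lemma3.2}, one bounds $|\E[\prod_i f(X_{u_i}-\widetilde{X}_{v_{\sigma(i)}})]|$ by an integral of $\prod_i|\widehat{f}(\xi_i)|$ against $\exp(-\tfrac{\kappa}{2}\sum_i|y_i|^2\Delta u_i^{2H})\exp(-\tfrac{\kappa}{2}\sum_i|\widetilde{y}_i|^2\Delta v_i^{2H})$, where $y_i,\widetilde{y}_i$ are the partial sums of the $\xi$'s dictated by $\sigma$. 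Combined with the cancellation $\widehat{f}(0)=0$, which gives $|\widehat{f}(\xi)|\le c|\xi|^\beta$ near the origin (Remark \ref{remark1}), Lemmas \ref{lemma0}, \ref{lemma0'} and \ref{lemma1} reduce matters to one-dimensional gap integrals. The crucial observation is that, for a fixed Fourier mode, $\int_0^{e^{nt}}\exp(-\tfrac{\kappa}{2}|y|^2 w^{2H})\,dw$ can create a logarithmic factor of order $n$ once it is weighted by $|y|^{-d}$ over the range $[e^{-nHt},1]$, whereas the restricted integral $\int_0^{n^{-m}}\exp(-\tfrac{\kappa}{2}|y|^2 w^{2H})\,dw$ cannot: confining a single gap to an interval of length $n^{-m}$ destroys the logarithmic growth carried by that scale. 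Since the full sum is of order at most $n^{m/2}$, built from at most $\lfloor m/2\rfloor$ such logarithmic factors, removing access to one of them leaves a bound of order $o(n^{m/2})$ for every $\sigma$; summing over the finitely many $\sigma$ and the finitely many choices of the small gap then gives $\lim_{n\to\infty}|\E[F_n^m]-\I^n_m|=0$.

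The main obstacle is this last estimate: the Fourier variables attached to different scales are coupled through the partial sums $y_i=\sum_{j\ge i}\xi_j$ and their $\sigma$-twisted analogues on the $\widetilde{X}$ side, so one cannot simply factor the integral scale by scale. I expect to handle this along the lines of the chaining argument of \cite{nx1}: estimate the coupled integral by a product of decoupled one-dimensional bounds via Cauchy--Schwarz and the elementary inequalities of Lemmas \ref{lemma0}--\ref{lemma1}, while keeping every Gaussian factor intact, since a crude bound that discards the decay would let the positional integrals grow like $e^{nt}$ and is therefore useless. Only the upper bound $o(n^{m/2})$ is required here, so the delicate pairing bookkeeping of Proposition \ref{moments} can be bypassed; it suffices to know that no configuration containing a sub-$n^{-m}$ gap can realize the maximal number of logarithmic factors.
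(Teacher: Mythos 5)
Your reduction is set up correctly: the symmetrization of $\E[F_n^m]$ into $\frac{m!}{n^{m/2}}\sum_{\sigma}\E\big[\int\int\prod_i f(X_{u_i}-\widetilde{X}_{v_{\sigma(i)}})\big]$ over the ordered simplex, and the identification of $\E[F_n^m]-\I^n_m$ with the contribution of configurations having some consecutive gap below $n^{-m}$, both match what the paper does implicitly. The genuine gap is in your ``crucial observation'' that confining a single gap to an interval of length $n^{-m}$ destroys the logarithmic factor carried by that scale, so that no such configuration can realize the maximal number $m/2$ of logarithms. This is false for precisely the scales that produce the logarithms. In the chained bound the weights $\prod_j|\widehat{f}(y_{2j})|^2$ sit only on the even-indexed $y$'s; an odd-indexed scale $k$ contributes $\int_{\R^d}\int\int e^{-\frac{\kappa}{2}|y_k|^2(s_k^{2H}+r_k^{2H})}\,ds_k\,dr_k\,dy_k=c\int\int(s_k^{2H}+r_k^{2H})^{-d/2}\,ds_k\,dr_k$, whose logarithm is generated by the range of \emph{small} gap values. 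After imposing a lower cutoff such as $e^{-2mnt}$ (which you need in any case, or this integral diverges), restricting $s_k$ to $[e^{-2mnt},n^{-m}]$ still gives $\ln\big(n^{-m}e^{2mnt}\big)=\Theta(n)$, i.e.\ a full logarithmic factor. Hence for $m\ge 3$ a configuration whose only sub-$n^{-m}$ gap sits at an odd-indexed position is still bounded by $O(n^{m/2})$ under your scheme, not $o(n^{m/2})$, and the limit does not follow.

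The paper's proof uses a different mechanism at exactly this point. It first replaces the simplex by $\overline{D}^n_m$ (gaps $\ge e^{-2mnt}$) using only the boundedness of $f$ and the negligible Lebesgue measure of the complement. On the remaining region it applies the Cauchy--Schwarz inequality so as to place $|\widehat{f}(x_i)|^2$ on \emph{every} Fourier variable, not on every other one as the chaining does; the resulting factor is recognized as $(2\pi)^{md}\,\E\int\prod_i U_f(X_{u_i}-\widetilde{X}_{v_i})$, where $U_f$ is the bounded inverse Fourier transform of $|\widehat{f}|^2$. The two constrained gap variables $u_{k+1},v_{k+1}$ are then integrated out by the trivial measure bound, producing a factor $n^{-2m}$, while the remaining $(m-1)$-fold integral is $O(n^{m-1})$ by Assumption (B) and Lemma \ref{lemma1}; the companion Cauchy--Schwarz factor is $O(n^m)$ and cancels the prefactor $n^{-m/2}$, giving $O(n^{-(m+1)/2})$ overall. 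The gain thus comes from brute-force measure of the constrained gap, not from the loss of a logarithm. To repair your argument you would need either this $U_f$ device or some other way of attaching integrable decay in $y_k$ to an arbitrarily prescribed index $k$, which the consecutive-pairing chaining argument does not supply.
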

\begin{proof}  Note that
\begin{align} \label{dm}
|\E[F_n^m]-\I^n_m|
&\leq \frac{m!}{n^{\frac{m}{2}}}\left|\sum_{\sigma \in \mathscr{P}} \E\Big[ \int_{\widetilde{D}^n_{m}\times \widetilde{D}^n_{m} -\overline{D}^n_m\times \overline{D}^n_m} \prod^m_{i=1} f(X_{u_i}-\widetilde{X}_{v_{\sigma(i)}}) \, du\, dv\Big]\right|  \nonumber\\
&\qquad\qquad+\frac{m!}{n^{\frac{m}{2}}} \left|\sum_{\sigma \in \mathscr{P}} \E\Big[ \int_{\overline{D}^n_m\times \overline{D}^n_m-D^{n}_{m}\times D^{n}_{m}} \prod^m_{i=1} f(X_{u_i}-\widetilde{X}_{v_{\sigma(i)}}) \, du\, dv\Big]\right|,
\end{align}
where  $\widetilde{D}^n_{m}=\Big\{ u\in [1,e^{nt}]^m: u_1<\cdots <u_m \Big\}$ and 
\begin{align*}
\overline{D}^n_{m}=\widetilde{D}^n_{m}\cap \Big\{u_{i+1}-u_i\geq e^{-2mnt},\, i=1,2,\dots, m-1\Big\}.
\end{align*}

Since $f$ is bounded, the first term on the right-hand side of (\ref{dm}) is less than a constant multiple of $n^{-\frac{m}{2}}$. As for the second term, using Fourier transform, we get
\begin{align}
&\left|\E\left[\int_{\overline{D}^n_m\times \overline{D}^n_m-D^{n}_{m}\times D^{n}_{m}} \prod^m_{i=1} f(X_{u_i}-\widetilde{X}_{v_{\sigma(i)}})\, du\, dv \right] \right|\nn\\
&\leq \frac{1}{(2\pi)^{md}} \int_{\R^{md}}  \int_{\overline{D}^n_m\times \overline{D}^n_m-D^{n}_{m}\times D^{n}_{m}}  \prod^m_{i=1} |\widehat{f}(x_i)|\, \exp\Big(-\frac{1}{2}\Var\big(\sum\limits^m_{i=1} x_i\cdot X_{u_i}\big)\Big)\nn\\
&\qquad\qquad\qquad\qquad\qquad\qquad\qquad\qquad\times\exp\Big(-\frac{1}{2}\Var\big(\sum\limits^m_{i=1} x_i\cdot \widetilde{X}_{v_{\sigma(i)}}\big)\Big)\, du\, dv\, dx.\label{eq-4.4}
\end{align}
For $k=1,2,\cdots,m-1$, set $$\overline{D}^n_{m,k}=\overline{D}^n_{m}\bigcap \Big\{u_{k+1}-u_{k}< n^{-m} \Big\}.$$ Then 
\[
\overline{D}^n_m\times \overline{D}^n_m-D^{n}_{m}\times D^{n}_{m}\subset\bigcup^{m-1}_{k=1} \left((\overline{D}^n_m\times \overline{D}^{n}_{m,k})\bigcup (\overline{D}^{n}_{m,k}\times \overline{D}^n_m)\right).
\]  

Applying Cauchy-Schwarz inequality to the right-hand side of \eqref{eq-4.4} on the domains $\overline{D}^n_m\times \overline{D}^{n}_{m,k}$ and $\overline{D}^{n}_{m,k}\times \overline{D}^n_m$, respectively, we have
\begin{align*}
&\int_{\R^{md}}  \int_{\overline{D}^n_m\times \overline{D}^{n}_{m,k}}  \prod^m_{i=1} |\widehat{f}(x_i)|\, \exp\Big(-\frac{1}{2}\Var\big(\sum\limits^m_{i=1} x_i\cdot X_{u_i}\big)-\frac{1}{2}\Var\big(\sum\limits^m_{i=1} x_i\cdot \widetilde{X}_{v_{\sigma(i)}}\big)\Big)  \, du\, dv\, dx\\
&\leq \left[ \int_{\R^{md}} \int_{\overline{D}^n_m\times \overline{D}^n_m} \exp\Big(-\frac{1}{2}\Var\big(\sum\limits^m_{i=1} x_i\cdot X_{u_i}\big)-\frac{1}{2}\Var\big(\sum\limits^m_{i=1} x_i\cdot X_{v_i}\big)\Big) \, du\, dv\, dx\right]^{\frac{1}{2}}\\
&\qquad \times\left[ \int_{\R^{md}} \int_{\overline{D}^{n}_{m,k}\times \overline{D}^{n}_{m,k}}  \prod^m_{i=1} \big|\widehat{f}(x_i)\big|^2\, \exp\Big(-\frac{1}{2}\Var\big(\sum\limits^m_{i=1} x_i\cdot \widetilde{X}_{u_{\sigma(i)}}\big)-\frac{1}{2}\Var\big(\sum\limits^m_{i=1} x_i\cdot \widetilde{X}_{v_{\sigma(i)}}\big)\Big) \, du\, dv\, dx\right]^{\frac{1}{2}}
\end{align*}
and 
\begin{align*}
&\int_{\R^{md}}  \int_{\overline{D}^n_{m,k}\times \overline{D}^{n}_{m}}  \prod^m_{i=1} |\widehat{f}(x_i)|\, \exp\Big(-\frac{1}{2}\Var\big(\sum\limits^m_{i=1} x_i\cdot X_{u_i}\big)-\frac{1}{2}\Var\big(\sum\limits^m_{i=1} x_i\cdot \widetilde{X}_{v_{\sigma(i)}}\big)\Big)  \, du\, dv\, dx\\
&\leq \left[ \int_{\R^{md}} \int_{\overline{D}^{n}_{m,k}\times \overline{D}^{n}_{m,k}}  \prod^m_{i=1} \big|\widehat{f}(x_i)\big|^2\, \exp\Big(-\frac{1}{2}\Var\big(\sum\limits^m_{i=1} x_i\cdot X_{u_i}\big)-\frac{1}{2}\Var\big(\sum\limits^m_{i=1} x_i\cdot X_{v_i}\big)\Big) \, du\, dv\, dx\right]^{\frac{1}{2}} \\
&\qquad \times \left[ \int_{\R^{md}} \int_{\overline{D}^n_m\times \overline{D}^n_m} \exp\Big(-\frac{1}{2}\Var\big(\sum\limits^m_{i=1} x_i\cdot \widetilde{X}_{u_i}\big)-\frac{1}{2}\Var\big(\sum\limits^m_{i=1} x_i\cdot \widetilde{X}_{v_i}\big)\Big) \, du\, dv\, dx\right]^{\frac{1}{2}}.
\end{align*}
By Fubini's theorem and Assumption (B), 
\begin{align*}
&\int_{\R^{md}} \int_{\overline{D}^n_m\times \overline{D}^n_m} \exp\Big(-\frac{1}{2}\Var\big(\sum\limits^m_{i=1} x_i\cdot X_{u_i}\big)-\frac{1}{2}\Var\big(\sum\limits^m_{i=1} x_i\cdot X_{v_i}\big)\Big) \, du\, dv\, dx\\
&=\int_{\overline{D}^n_m\times \overline{D}^n_m} \int_{\R^{md}} \exp\Big(-\frac{1}{2}\Var\big(\sum\limits^m_{i=1} x_i\cdot (X_{u_i}-\widetilde{X}_{v_i})\big)\Big)\, dx \, du\, dv\\
&\leq c_1 \int_{\overline{D}^n_m \times \overline{D}^n_m}  \prod^m_{i=1} [(u_i-u_{i-1})^{2H}+(v_i-v_{i-1})^{2H}]^{-\frac{d}{2}} dudv\\
&\leq c_2 \left(\int^{e^{nt}}_{e^{-2mnt}}\int^{e^{nt}}_{e^{-2mnt}} (r^{2H}+s^{2H})^{-\frac{d}{2}} dr\, ds \right)^m\\
&\leq c_3\, n^m,
\end{align*}
where the last inequality follows from Lemma \ref{lemma1}. 

On the other hand,
\begin{align*}
&\int_{\R^{md}} \int_{\overline{D}^{n}_{m,k}\times \overline{D}^{n}_{m,k}}  \prod^m_{i=1} \big|\widehat{f}(x_i)\big|^2\, \exp\Big(-\frac{1}{2}\Var\big(\sum\limits^m_{i=1} x_i\cdot \widetilde{X}_{u_{\sigma(i)}}\big)-\frac{1}{2}\Var\big(\sum\limits^m_{i=1} x_i\cdot \widetilde{X}_{v_{\sigma(i)}}\big)\Big) \, du\, dv\, dx\\
=&(2\pi)^{md}\, \E\left[\int_{\overline{D}^{n}_{m,k}\times \overline{D}^{n}_{m,k}}\prod^m_{i=1} U_f(X_{u_{\sigma(i)}}-\widetilde{X}_{v_{\sigma(i)}}) \, du\, dv\right],
\end{align*}
where $U_f$ is the inverse Fourier transform of $ \big|\widehat{f}\, \big|^2$. 

Therefore, combining all the inequalities/equality after \eqref{eq-4.4}, we have, 
\begin{align} \label{eq-4.4.0}
&\frac{m!}{n^{\frac{m}{2}}} \left| \E\left[\int_{\overline{D}^n_m\times \overline{D}^n_m-D^{n}_{m}\times D^{n}_{m}} \prod^m_{i=1} f(X_{u_i}-\widetilde{X}_{v_{\sigma(i)}})\, du\, dv \right] \right|\nn\\
&\qquad \leq  c_4\,\sum^{m-1}_{k=1} \left[\E\left[\int_{\overline{D}^{n}_{m,k}\times \overline{D}^{n}_{m,k}}\prod^m_{i=1} U_f(X_{u_i}-\widetilde{X}_{v_i}) \, du\, dv\right]\right]^{\frac{1}{2}}\nn\\
&\qquad\qquad \leq  c_5\,n^{-m}\left[\E\left[\int_{\overline{D}^{n}_{m-1}\times \overline{D}^{n}_{m-1}}\prod^{m-1}_{i=1} U_f(X_{u_i}-\widetilde{X}_{v_i}) \, du\, dv\right]\right]^{\frac{1}{2}},
\end{align}
where the last inequality follows from the boundedness of $U_f$ and the definition of $\overline{D}^n_{m,k}$.

Using Fourier transform, the boundedness of $|\widehat{U_f}|$, Assumption (B) and Lemma \ref{lemma1},  we get that the right-hand side of (\ref{eq-4.4.0}) is less than
\begin{align*}
& c_6\,n^{-m} \left[\int_{\overline{D}^n_{m-1}\times \overline{D}^n_{m-1}} \int_{\R^{(m-1)d}} \exp\Big(-\frac{1}{2}\Var\big(\sum\limits^{m-1}_{i=1} x_i\cdot (X_{u_i}-\widetilde{X}_{v_i})\big)\Big)\, dx \, du\, dv\right]^{\frac{1}{2}} \\
 &\leq c_7\, n^{-\frac{m+1}{2}}.
\end{align*}
This gives the desired result.
\end{proof}

\medskip

 Now we  represent $\I_m^n$ given  in \eqref{Inm} using Fourier transform.  For $t\ge 0 $ and $\sigma\in\mathscr{P}$, set
 \[
I_{nt}(x)= \int_{D^{n}_{m} } \exp\Big( -\frac 12 \mathrm{Var} \big( \sum_{i=1}^{m} x_i \cdot X_{u_i}  \big)\Big)\, du
 \]
and  
\[
I_{nt}^\sigma(x)= \int_{D^{n}_{m}} \exp \Big(-\frac 12 \mathrm{Var} \big( \sum_{i=1}^{m} x_i \cdot \widetilde{X}_{v_{\sigma(i)}}  \big)\Big)\, dv,
\]
where $D_m^n$ is defined in \eqref{dmn}. Then by Fourier transform, 
\begin{equation} \label{w2}
\I^n_m=  \frac{m!}{((2\pi)^d\sqrt{n})^{m}}  \sum_{\sigma \in \mathscr{P}}  \int_{\R^{md}}  \prod _{i=1}^m  \widehat{f} (x_i) \,I_{nt}(x)\,  I_{nt}^\sigma(x)\, dx.
\end{equation}

By the preceding lemmas in this section, to prove Theorem \ref{thm2},  it suffices to compute $\lim\limits_{n\to\infty} \I^n_m$. To do this, we will use  Assumption (B) and adapt the chaining argument from  \cite{nx1}  to obtain some estimates in Lemma \ref{chain}, which is crucial to the calculation of  $\lim\limits_{n\to\infty} \I^n_m$ in Proposition \ref{moments}. For better readability, we split the rest of this section into four parts. 
\bigskip

\noindent
{\bf (I)}  {\bf Symmetrization of $|\I_m^n|$ via Cauchy-Schwarz inequality.}  In this part, we will obtain an upper bound for $|\I_m^n|$, see \eqref{cs}. To this goal,  we will first apply the  Cauchy-Schwarz inequality to the integral in (\ref{w2}) and then use Assumption (B) for the variance. Note that this kind of procedure will be used frequently  for similar integrals in the sequel.

For the integral on the right-hand side of \eqref{w2}, the Cauchy-Schwarz inequality yields
 \begin{align*}   \notag
&\bigg|  \int_{\R^{md}}  \prod _{i=1}^m  \widehat{f} (x_i) \,
I_{nt}(x)  I_{nt}^\sigma(x)\, dx  \bigg|  \\
&\qquad\qquad \le 
\bigg[\int_{\R^{md}}  \prod _{i=1}^m  |\widehat{f} (x_i)  |  
\big(I_{nt}(x)\big)^2\,  dx     \bigg]^{\frac 12}  
 \bigg[\int_{\R^{md}}   \prod _{i=1}^m  |\widehat{f} (x_i) | 
\big(I_{nt}^\sigma(x) \big)^2\,  dx \bigg]^{\frac 12}.
\end{align*}
 Taking into account that $\prod _{i=1}^m| \widehat{f} (x_i) |  $ is symmetric  in terms of  $x_i$s,  the second factor on the right-hand side of the above inequality does not depend on $\sigma$ and hence
 \[
 \bigg|  \int_{\R^{md}}   \prod _{i=1}^m  \widehat{f} (x_i)  
I_{nt}(x)  I_{nt}^\sigma(x)\, dx    \bigg| 
\le
 \int_{\R^{md}}  \prod _{i=1}^m  |\widehat{f} (x_i)  |  
\big(I_{nt }(x)\big)^2\, dx.
\]
Substituting this estimate into  (\ref{w2}) yields
\[
|\I^n_m| \le   \frac{(m!)^2}{((2\pi)^d\sqrt{n})^{m}}  \int_{\R^{md}}   \prod _{i=1}^m | \widehat{f} (x_i) |   
\big(I_{nt}(x) \big)^2\,  dx.
\]
Making the change of variables $y_i =\sum\limits_{j=i}^m x_j$ (with the convention $y_{m+1} =0$), we can write
 \begin{align*}
|\I^n_m| 
&\le  \frac{(m!)^2}{(2\pi)^{md}}\,  n^{-\frac{m}{2}}    \int_{\R^{md}} \int_{D^n_m \times D^n_m} 
  \prod _{i=1}^m| \widehat{f} (y_i-y_{i+1} ) |   \\
& \qquad  \times 
\exp\bigg( -\frac 12 \mathrm{Var} \Big( \sum_{i=1}^{m} y_i \cdot (X_{u_i} -X_{u_{i-1}}) \Big)  -\frac 12 \mathrm{Var} \Big( \sum_{i=1}^{m} y_i \cdot (\widetilde{X}_{v_i} -\widetilde{X}_{v_{i-1}}) \Big) \bigg) \,du\, dv\, dy.
\end{align*}
Applying Assumption (B) and making the change of variables $s_1 =u_1$, $r_1=v_1$, $s_i =u_i-u_{i-1}$,  and  $r_i =v_i- v_{i-1}$, for $2\le i  \le m$, we obtain
 \begin{align} \label{cs}
 |\I^n_m| 
 &\le  \frac{(m!)^2}{(2\pi)^{md}}\, n^{-\frac{m}{2}}    \int_{\R^{md}} \int_{[n^{-m},e^{nt}]^{2m}} 
 \prod _{i=1}^m| \widehat{f} (y_i-y_{i+1} ) |  \nonumber\\
& \qquad\qquad\qquad \times 
\exp\bigg( -\frac {\kappa} 2   \sum_{i=1}^{m} |y_i|^{2} (s_i^{2H} + r_i^{2H})\bigg) \,ds\, dr\, dy.
\end{align}

\noindent
{\bf (II)}   {\bf Chaining argument.} 
In this part, we apply the chaining argument introduced in \cite{nx1} to the integral on the right-hand side of \eqref{cs}. The main idea is to replace each product $\widehat{f}(y_{2i-1}-y_{2i})\widehat{f}(y_{2i}-y_{2i+1})$ by   
$\widehat{f}(-y_{2i})\widehat{f}(y_{2i} )= |\widehat{f}(y_{2i})|^2 $, noting that, by the assumption $\int_{\R^d}|f(x)||x|^{\beta}\, dx<\infty$ for some $\beta>0$,  the differences  $\widehat{f}(y_{2i-1}-y_{2i})- \widehat{f}(-y_{2i})$ and
$\widehat{f}(y_{2i}-y_{2i+1})- \widehat{f}(y_{2i})$ are bounded by constant multiples of $|y_{2i-1}|^\alpha$ and $|y_{2i+1}|^\alpha$, respectively, for any
$\alpha\in[0,\beta]$.   Making these substitutions for $\prod _{i=1}^m| \widehat{f} (y_i-y_{i+1})$  recursively, we get
 \begin{align*}
\prod _{i=1}^m| \widehat{f} (y_i-y_{i+1} ) | & =  | \widehat{f} (y_1-y_{2} )  -\widehat{f} (-y_{2} ) +
  \widehat{f} (-y_{2}) | | \widehat{f} (y_2-y_{3} )  -\widehat{f} (y_{2 }) +
  \widehat{f} (y_{2}) |  \\
  & \times | \widehat{f} (y_3-y_{4} )  -\widehat{f} (-y_{4 }) +
 \widehat{f} (-y_{4}) | | \widehat{f} (y_4-y_{5} )  -\widehat{f} (y_{4 }) +
\widehat{f} (y_{4}) |  \times \cdots \\
  & = \prod _{i=1}^m \Big| \widehat{f} (y_i-y_{i+1} )  -\widehat{f} \big((-1)^iy_{2 \lfloor \frac {i+1}2 \rfloor }\big) +
  \widehat{f} \big((-1)^i y_{2 \lfloor \frac {i+1}2 \rfloor } \big) \Big|,
  \end{align*}
  where $ \lfloor \frac {i+1}2 \rfloor  $ denotes the integer part of $\frac {i+1}2$ and $y_{m+1}=0$ by convention. 
  
  Noting that $|\widehat{f}(y)|=|\widehat{f}(-y)|$, we have
  \[
    \prod _{i=1}^m| \widehat{f} (y_i-y_{i+1} ) |
   \le \sum_{k=1}^m I_k,
   \]
  where
  \[
  I_k=\Big( \prod _{j=1}^{k-1} \big| \widehat{f}(y_{2 \lfloor \frac {j+1}2 \rfloor }) \big| \Big)~
  \Big| \widehat{f} (y_k-y_{k+1} )  -\widehat{f}(y_{2 \lfloor \frac {k+1}2 \rfloor })\Big| ~ \prod_{j=k+1}^m \big| \widehat{f} (y_j-y_{j+1} )\big|
  \]
    for $k=1, 2, \dots, m-1$,
  and
  \[
  I_m= \Big( \prod _{j=1}^{m-1} \big| \widehat{f}(y_{2 \lfloor \frac {j+1}2 \rfloor }) \big| \Big) \big|\widehat{f} (y_m)\big|.
  \]
 In this way, by \eqref{cs} we obtain the decomposition
\begin{align*} \label{eq3}
 \big| \I^n_m \big|\le \frac{(m!)^2}{(2\pi)^{md}}\,    \sum_{k=1}^m A_{k,m},
 \end{align*}
where
\[
A_{k,m}=n^{-\frac{m}{2}}  \int_{\R^{md}} \int_{[n^{-m},e^{nt}]^{2m}} 
I_k\, \exp\Big( -\frac {\kappa} 2   \sum_{i=1}^{m} |y_i|^{2}( s_i^{2H}+r_i^{2H}) \Big) \, ds\,dr\, dy.
\]

\noindent
{\bf (III)}   {\bf Some crucial estimates.}  We fix a  constant $\lambda\in (0, 1/2)$. The estimation of each term $A_{k,m}$, for $k=1, \dots, m$, is given below.
\begin{lemma} \label{chain}  There exists a positive constant $c$ such that 
\begin{enumerate}
\item[(i)] $A_{k,m}\leq c\, n^{-\lambda}$, for $k=1,2,\dots,m-1$,
\item[(ii)] $A_{m,m}\leq c\, n^{-\frac{1}{2}}$  if $m$ is odd,  and  $A_{m,m}\leq c$ if $m$ is even.
\end{enumerate}
\end{lemma}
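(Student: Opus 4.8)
The plan is to integrate out the $2m$ time variables first and reduce each $A_{k,m}$ to a pure frequency integral. Since the exponent $\exp\big(-\frac{\kappa}{2}\sum_{i=1}^m|y_i|^2(s_i^{2H}+r_i^{2H})\big)$ factorizes over $i$, the $(s_i,r_i)$-integration produces a single weight $\phi_n(y):=\big(\int_{n^{-m}}^{e^{nt}}\exp(-\frac{\kappa}{2}|y|^2s^{2H})\,ds\big)^2$, so that $A_{k,m}=c\,n^{-m/2}\int_{\R^{md}}I_k\prod_{i=1}^m\phi_n(y_i)\,dy$. By Lemma \ref{lemma0} one has $\phi_n(y)\le c\,(e^{2nt}\wedge|y|^{-d})$, and I would record three single-variable estimates. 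A \emph{free} variable satisfies $\int_{\R^d}\phi_n(y)\,dy\le c\,n$ (this is a constant multiple of $\int\!\int(s^{2H}+r^{2H})^{-d/2}$, controlled by Lemma \ref{lemma1}). A \emph{paired} variable satisfies $\int_{\R^d}|\widehat f(y)|^2\phi_n(y)\,dy\le c$, using $\int_{\R^d}|\widehat f(y)|^2|y|^{-d}\,dy<\infty$ from Remark \ref{remark1} for $|y|\ge1$ and $|\widehat f(y)|\le c|y|^{\alpha}$ near the origin. A \emph{H\"older-weighted} variable satisfies $\int_{\R^d}|y|^{\alpha}\phi_n(y)\,dy\le c\,n^{mH\alpha}$, the growth reflecting that the weight $|y|^{\alpha}$ concentrates mass near the high-frequency cutoff $|y|\sim n^{mH}$ created by the lower time-limit $n^{-m}$.

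The second ingredient is the bound on $I_k$ itself. I would use the boundedness $|\widehat f|\le\|f\|_{L^1}$ and, for the single difference factor in $I_k$, the H\"older estimate $|\widehat f(a)-\widehat f(b)|\le c|a-b|^{\alpha}$ (valid for $\alpha\in[0,\beta]$ because $\int_{\R^d}|f(x)||x|^{\beta}\,dx<\infty$), which replaces $|\widehat f(y_k-y_{k+1})-\widehat f((-1)^k y_{2\lfloor(k+1)/2\rfloor})|$ by $c\,|y_{k'}|^{\alpha}$, where $y_{k'}$ is the odd-indexed neighbour ($y_k$ or $y_{k+1}$). The remaining factors split into an already-chained head $\prod_{j<k}|\widehat f(y_{2\lfloor(j+1)/2\rfloor})|$, in which the even indices appear squared (up to one boundary term), and a still-telescoped tail $\prod_{j>k}|\widehat f(y_j-y_{j+1})|$; the head is integrated by the paired estimate and the tail by applying the same chaining recursively.

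The counting then runs as follows. For the fully chained term $I_m$ the even indices appear as $|\widehat f|^2$ and the odd indices are free: when $m=2p$ this is $p$ paired ($O(1)$) and $p$ free ($O(n)$), giving $A_{m,m}\le c\,n^{-p}n^{p}=c$; when $m=2p+1$ there is one additional first-power factor $|\widehat f(y_m)|$, whose integral against $\phi_n$ is at most $O(\log n)$, leaving $p$ free variables and $A_{m,m}\le c\,n^{-(2p+1)/2}n^{p}\log n$, which gives (ii) up to a harmless logarithm. For $k<m$ the difference factor forces one free variable to carry the weight $|y|^{\alpha}$, so its contribution drops from $O(n)$ to $O(n^{mH\alpha})$; since a balanced tail of length $\ell$ contributes at most $n^{\lfloor\ell/2\rfloor}$ (up to logarithms), the global exponent of $n$ falls strictly below the critical value and one obtains $A_{k,m}\le c\,n^{-1/2+mH\alpha}$ (times a logarithm). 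Choosing $\alpha\in(0,\beta]$ small enough that $mH\alpha<\frac12-\lambda$ yields $A_{k,m}\le c\,n^{-\lambda}$, which is (i).

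The step I expect to be the main obstacle is the uniform treatment of the telescoped tail $\prod_{j>k}|\widehat f(y_j-y_{j+1})|$ for small $k$: these factors cannot be bounded by constants without generating too many free variables, so they must be processed by the chaining/Cauchy--Schwarz mechanism recursively, and one must verify that a balanced tail of length $m-k$ never contributes more than $n^{\lfloor(m-k)/2\rfloor}$ (up to logarithms). Combined with the head, this has to keep the number of full-strength free variables at most $(m-1)/2$ for every $k$ and every placement of the H\"older weight, so that the prefactor $n^{-m/2}$ always dominates. The remaining delicacy is to ensure the logarithmic factors from the first-power integrals are absorbed by the strict power gain $n^{mH\alpha-1}$ of the difference factor for $k<m$, and, in the odd fully-chained case, to sharpen the last first-power integral if the clean rate $n^{-1/2}$ is insisted upon; the single-variable estimates and the H\"older bound are otherwise routine.
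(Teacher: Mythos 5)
Your proposal follows essentially the same route as the paper: after integrating out the time variables, everything reduces to exactly the three single-variable estimates you list (the paper's Lemmas~\ref{lemma1}, \ref{lemma2} and \ref{lemma3}: free variable $O(n)$, paired variable $O(1)$, H\"older-weighted variable $O(n^{mH\alpha})$), the telescoped tail is handled not by any recursion but simply by bounding every other factor via $|\widehat f(y_j-y_{j+1})|\le c\,(|y_j|^{\alpha}+|y_{j+1}|^{\alpha})$ and the remaining ones by constants, and the resulting exponent bookkeeping ($\tfrac m2-\lfloor\tfrac m2\rfloor-1$ plus multiples of $mH\alpha$, killed by taking $\alpha$ small) matches yours. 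The only lossy point is the odd case of (ii): the first-power integral is in fact $O(1)$ rather than $O(\log n)$, since $\int_{\R^d}|\widehat f(y)|\,|y|^{-d}\,dy<\infty$ (near the origin $|\widehat f(y)|\le c|y|^{\alpha}$ because $\int_{\R^d} f=0$, and at infinity Cauchy--Schwarz with $\widehat f\in L^2$ applies; see Remark~\ref{remark1}), so the clean rate $n^{-1/2}$ follows and the sharpening you anticipate is exactly this observation.
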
 

\begin{proof}  To prove part (i), we first consider the case when $k$ is odd. By the assumption on $f$, we can obtain $|\widehat{f}(y)|\leq c_{\alpha}(|y|^{\alpha}\wedge 1)$ for any $\alpha\in[0,\beta]$. So $A_{k,m} $ is less than a constant multiple of
\begin{align*}
&n^{-\frac{m}{2}}\int_{\R^{md}}\int_{ [n^{-m}, e^{nt}]^{2m}} |y_{k}|^{\alpha} \prod^{\lfloor\frac{m}{2}\rfloor}_{j=\frac{k+1}{2}}  (|y_{2j}|^{\alpha}+|y_{2j+1}|^{\alpha})   \prod^{\frac{k-1}{2}}_{j=1}|\widehat{f}(y_{2j})|^2   \\
&\qquad\qquad\qquad \qquad \qquad \times    \exp\Big(-\frac{\kappa}{2} \sum\limits^m_{i=1} |y_i|^2(s^{2H}_i+r_i^{2H})\Big)\, ds\, dr\, dy.
 \end{align*}

Integrating with respect to the $y_i$, $s_i$ and $r_i$ for  $i\leq k-1$ gives, by Lemmas \ref{lemma1} and \ref{lemma3},
\begin{align*}
A_{k,m}
&\leq c_1\, n^{-\frac{m-(k-1)}{2}}\int_{\R^{(m-k+1)d}}\int_{[n^{-m}, e^{nt}]^{2(m-k+1)}}  |y_k|^{\alpha}\prod^{\lfloor\frac{m}{2}\rfloor}_{j=\frac{k+1}{2}}  (|y_{2j}|^{\alpha}+|y_{2j+1}|^{\alpha})\\
&\qquad\qquad\times \exp\Big(-\frac{\kappa}{2}\sum\limits^m_{i=k} |y_i|^2\, (s^{2H}_i+r^{2H}_i)\Big)\, d\overline{s}\, d\overline{r}\, d\overline{y},
\end{align*}
where $d\overline{s}=ds_k\cdots ds_{m}$,  $d\overline{r}=dr_k\cdots dr_{m}$ and $d\overline{y}=dy_k\cdots dy_{m}$. 

By Lemmas \ref{lemma1} and \ref{lemma2},
\begin{align*}
A_{k,m}
&\leq c_2\, n^{-\frac{m-k+1}{2}+(\lfloor\frac{m-k+1}{2}\rfloor+1)(mH\alpha)+(m-k-\lfloor\frac{m-k+1}{2}\rfloor)}\\
&= c_2\, n^{\frac{m}{2}-\lfloor\frac{m}{2}\rfloor-1+(\lfloor\frac{m-k+1}{2}\rfloor+1)(mH\alpha)}.
\end{align*}
Choosing $\alpha$ small enough such that 
\[
\frac{m}{2}-\lfloor\frac{m}{2}\rfloor-1+(\lfloor\frac{m-k+1}{2}\rfloor+1)(mH\alpha)=-\lambda
\] 
gives
\begin{equation} \label{chaino}
A_{k,m}\leq c_2\, n^{-\lambda}.
\end{equation}

We next consider the case when $k$ is even. By Assumption (B), $A_{k,m}$ is less than a constant multiple of
\begin{align*}
&n^{-\frac{m}{2}}\int_{\R^{md}}\int_{[n^{-m}, e^{nt}]^{2m}} \big|\widehat{f}(-y_k)\big|\big|\widehat{f}(y_k-y_{k+1})-\widehat{f}(y_k)\big| \prod^m_{i=k+1}  |\widehat{f}(y_i-y_{i+1})|  \\
&\qquad\qquad\qquad \times  \prod^{\frac{k-2}{2}}_{j=1}|\widehat{f}(y_{2j})|^2 \,\exp\Big(-\frac{\kappa}{2}\sum\limits^m_{i=1} |y_i|^2\, (s^{2H}_i+r^{2H}_i)\Big)\, ds\, dr\, dy.
\end{align*}
Using similar arguments as in the odd case,
\begin{align*}
A_{k,m}
&\leq c_3\, n^{-\frac{m}{2}}\int_{\R^{md}}\int_{[n^{-m}, e^{nt}]^{2m}} |y_k|^{\alpha}|y_{k+1}|^{\alpha} \prod^{\lfloor\frac{m}{2}\rfloor}_{j=\frac{k+2}{2}}  (|y_{2j}|^{\alpha}+|y_{2j+1}|^{\alpha}) \\
&\qquad\qquad\qquad \times \Big(\prod^{\frac{k-2}{2}}_{j=1}|\widehat{f}(y_{2j})|^2\Big)\,\exp\Big(-\frac{\kappa}{2}\sum\limits^m_{i=1} |y_i|^2\, (s^{2H}_i+r^{2H}_i)\Big)\, ds\, dr\, dy\\
&\leq c_4\, n^{-\frac{m-(k-2)}{2}+(\lfloor\frac{m-k}{2}\rfloor+2)(mH\alpha)+(m-k-\lfloor\frac{m-k}{2}\rfloor)}\\
&= c_4\, n^{\frac{m}{2}-\lfloor\frac{m}{2}\rfloor-1+(\lfloor\frac{m-k}{2}\rfloor+2)(mH\alpha)}.
\end{align*}
Choosing $\alpha$ small enough such that 
\[
\frac{m}{2}-\lfloor\frac{m}{2}\rfloor-1+(\lfloor\frac{m-k}{2}\rfloor+2)(mH\alpha)=-\lambda
\] 
gives
\begin{equation} \label{chaine}
A_{k,m}\leq c_4\, n^{-\lambda}.
\end{equation}
Combining (\ref{chaino}) and (\ref{chaine}) gives the desired estimates in part (i).  

Finally, we show part (ii). If $m$ is odd, by Lemmas \ref{lemma1} and \ref{lemma3},
\begin{align*}
A_{m,m}
&=n^{-\frac{m}{2}}\int_{\R^{md}}\int_{[n^{-m}, e^{nt}]^{2m}} |\widehat{f}(y_m)|\prod^{\frac{m-1}{2}}_{j=1}|\widehat{f}(y_{2j})|^2 \,\exp\Big(-\frac{\kappa}{2}\sum\limits^m_{i=1} |y_i|^2\, (s^{2H}_i+r^{2H}_i)\Big)\, ds\, dr\, dy\\
&\leq c_5\, n^{-\frac{1}{2}}\int_{\R^{d}}\int_{[n^{-m}, e^{nt}]^{2}} |\widehat{f}(y_m)|\,\exp\Big(-\frac{\kappa}{2}|y_m|^2\, (s^{2H}_m+r^{2H}_m)\Big)\, ds_m\, dr_m\, dy_m\\
&\leq c_6\, n^{-\frac{1}{2}}\int_{\R^{d}} |\widehat{f}(y_m)||y_m|^{-d}\, dy_m\\
&\leq c_7\, n^{-\frac{1}{2}},
\end{align*}
where the last second inequality follows from Lemma \ref{lemma3} and the last inequality can follow easily from Remark \ref{remark1}.

If $m$ is even, then by Lemma \ref{lemma3},
\begin{align*}
A_{m,m}
&=n^{-\frac{m}{2}}\int_{\R^{md}}\int_{[n^{-m}, e^{nt}]^{2m}} \prod^{\frac{m}{2}}_{j=1}|\widehat{f}(y_{2j})|^2 \,\exp\Big(-\frac{\kappa}{2}\sum\limits^m_{i=1} |y_i|^2\, (s^{2H}_i+r^{2H}_i)\Big)\, ds\, dr\, dy\\
&\leq c_8.
\end{align*}
This completes the proof.
\end{proof}

\medskip
 
\noindent
{\bf (IV)} {\bf Convergence of moments.}  In this final part, we show the convergence of $\I_m^n$ given in \eqref{w2}, and then prove Theorem \ref{thm2}.  Recall that 
\begin{align*}
\I^n_m
&=\frac{m!}{((2\pi)^d\sqrt{n})^m}\sum_{\sigma\in\mathscr{P}} \int_{D^n_m\times D^n_m} \int_{\R^{md}} \prod^m_{i=1}\widehat{f}(x_i)\\
&\qquad\qquad\qquad\times \exp\Big(-\frac{1}{2}\Var\big(\sum\limits^m_{i=1} x_i\cdot (X_{u_i}-\widetilde{X}_{v_{\sigma(i)}})\big)\Big)\, dx\, du\, dv.
\end{align*}

\begin{proposition} \label{moments} If $m$ is odd, then $\lim\limits_{n\to\infty} \I^n_m=0$. If $m$ is even, then
\[
\lim_{n\to\infty} \I^n_m=\left[ \frac{\Gamma(\frac{m}{2}+(\frac{\alpha_2}{\alpha_1})^{\frac{d}{4}})}{(\frac{m}{2})!\Gamma((\frac{\alpha_2}{\alpha_1})^{\frac{d}{4}})} \right]^2\, (D_{f,d} \, t)^{m/2} ((m-1)!!)^2.
\]
\end{proposition}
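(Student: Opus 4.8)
The plan is to argue by the parity of $m$, starting from the Fourier representation \eqref{w2} together with the chaining decomposition $|\I^n_m|\le \frac{(m!)^2}{(2\pi)^{md}}\sum_{k=1}^m A_{k,m}$ and the estimates of Lemma \ref{chain}. The odd case is then immediate: when $m$ is odd, Lemma \ref{chain}(i) gives $A_{k,m}\le c\,n^{-\lambda}$ for $1\le k\le m-1$ and Lemma \ref{chain}(ii) gives $A_{m,m}\le c\,n^{-1/2}$, so $|\I^n_m|\le c\,n^{-(\lambda\wedge \frac12)}\to 0$, which settles $\lim_{n\to\infty}\I^n_m=0$.

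For even $m$ the same bound shows that only the fully paired contribution (the term $A_{m,m}$) is of order $1$, so the task becomes to extract its exact value. I would isolate the main term $M^n_m$ obtained from \eqref{w2} by replacing the product $\prod_{i=1}^m\widehat f(x_i)$ by its fully paired version $\prod_{j=1}^{m/2}|\widehat f(y_{2j})|^2$, where $y_i=\sum_{l\ge i}x_l$. Telescoping the difference $\prod_{i}\widehat f(x_i)-\prod_j|\widehat f(y_{2j})|^2$ into exactly the intermediate substitutions used in the chaining argument of Part (II), each resulting error integral is of $A_{k,m}$-type with $k<m$ and is therefore $O(n^{-\lambda})$ by Lemma \ref{chain}(i). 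Hence $|\I^n_m-M^n_m|\to 0$ and it suffices to compute $\lim_{n\to\infty}M^n_m$.

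The heart of the argument is the evaluation of $\lim_{n\to\infty}M^n_m$, for which I would follow the scheme of Proposition \ref{prop}. First restrict the time integrations to the well-separated region $D_{m,\gamma}$, discarding its complement at cost $O(\sqrt{\ln\gamma/n})$ exactly as in \eqref{inm}; then use Assumptions (A1)--(A2) to replace each increment variance by an $\overline\alpha_1$/$\underline\alpha_1$- or $\overline\alpha_2$/$\underline\alpha_2$-weighted power of $(u_i-u_{i-1})^{2H}$ (and likewise for the $\widetilde X$ increments), according to whether the increment is short or long relative to the preceding time, while Assumptions (C1)--(C2) render the off-diagonal covariances negligible and reduce both the $X$- and $\widetilde X$-variances to diagonal sums. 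The permutation $\sigma$ is dealt with by the pairing technique: the weight $\prod_j|\widehat f(y_{2j})|^2$ forces the $\sigma$-rearranged coefficient sums $\sum_{l\ge i}(y_{\sigma(l)}-y_{\sigma(l)+1})$ appearing in the $\widetilde X$-exponential to align with the pairing of the $y$-variables, so that only the $\sigma$ compatible with this pairing survive; the $\alpha_1/\alpha_2$ bookkeeping over the short/long configurations then produces, via Lemma \ref{lemma4}, one Gamma-ratio factor from the $X$-side and an identical one from the $\widetilde X$-side, giving $\big[\Gamma(\tfrac m2+\lambda)/((\tfrac m2)!\,\Gamma(\lambda))\big]^2$ with $\lambda=(\tfrac{\alpha_2}{\alpha_1})^{\frac d4}$.

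The remaining geometric integral over the paired time variables and the $y$-variables is then evaluated as in Step 5 of Proposition \ref{prop} using Lemma 3.4 in \cite{bx}, yielding the factor $(D_{f,d}\,t)^{m/2}$ carrying the integral $\int_{\R^d}|\widehat f(x)|^2|x|^{-d}\,dx$, together with the combinatorial count $((m-1)!!)^2$, one $(m-1)!!$ from pairing the $X$-increments and one from pairing the $\widetilde X$-increments; sending $n\to\infty$ first and $\gamma\to\infty$ afterwards matches the upper and lower bounds and gives the claimed value. The main obstacle is precisely the pairing step: one must disentangle the second Gaussian process, whose increments are scrambled by $\sigma$, from the first, showing that after Fourier inversion the permuted coefficient sums align with the $|\widehat f|^2$-pairing so that incompatible permutations and all off-diagonal covariances (controlled by (C1)--(C2)) drop out, while correctly tracking the $\alpha_1$ versus $\alpha_2$ weight on each increment so as to reproduce the two independent Gamma factors that encode the moments of $Z_\lambda\widetilde Z_\lambda$.
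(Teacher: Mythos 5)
Your treatment of the odd case is exactly the paper's: Lemma \ref{chain} gives $|\I^n_m|\le c\,(n^{-\lambda}+n^{-1/2})\to0$. For even $m$, however, your outline misses the two devices on which the paper's computation actually rests, and one of your concrete steps is false. First, you propose restricting the time integration to $D_{m,\gamma}$ (all increments pairwise non-comparable) ``at cost $O(\sqrt{\ln\gamma/n})$ exactly as in \eqref{inm}.'' That error bound is specific to the first-order setting, where every increment direction contributes a logarithmically divergent factor $n$ and losing one pair costs a factor $n$. In the second-order setting the even-indexed increments carry the weight $|\widehat f(y_{2j})|^2|y_{2j}|^{-d}$, which is an absolutely convergent integral of order $1$, so the region where two even increments (or an even and an odd one) are comparable still contributes at the full order $n^{m/2}$ and cannot be discarded. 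The paper's decomposition is deliberately asymmetric: on $\widetilde O_m$ the odd increments live on $(n^2,e^{nt}/m)$ and are mutually $\gamma$-separated, while the even increments are confined to $(n^{-1},n)$ and are allowed to be comparable to one another. Replacing this by $D_{m,\gamma}$ changes the limit.

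Second, and more fundamentally, you give no mechanism for eliminating the permutations $\sigma\in\mathscr P_0$ in \eqref{ctr} or for diagonalizing the $\sigma$-scrambled coefficients $\sum_{j\ge i}(y_{\sigma(j)}-y_{\sigma(j)+1})$ in the $\widetilde X$-variance; the assertion that the weight $\prod_j|\widehat f(y_{2j})|^2$ ``forces the sums to align'' is not an argument, and a priori each $\sigma\in\mathscr P_0$ contributes an $O(1)$ quantity. The paper's mechanism is the $\varepsilon$-truncation of the frequency domain to $T^\sigma_\varepsilon$ in \eqref{tsv} (requiring both $|y_{2k-1}|<\varepsilon$ and $|\sum_{j\ge 2k-1}(y_{\sigma(j)}-y_{\sigma(j)+1})|<\varepsilon$), introduced at a cost that is only controlled after $n\to\infty$: for $\sigma\in\mathscr P_0$ these constraints force $|y_{2k}\pm y_{2\ell}|\le 4m\varepsilon$ for some $k\ne\ell$, so the contribution is bounded by $\int|\widehat f(x)|^2|\widehat f(y)|^2|x|^{-d}|y|^{-d}1_{\{|x-y|\le 4m\varepsilon\}}\,dx\,dy$, which vanishes only in the subsequent limit $\varepsilon\to0$; for $\sigma\in\mathscr P_1$ the same smallness of the odd $y$'s, together with Lemma \ref{lemma7}, reduces the $\widetilde X$-exponent to a diagonal form in $|y_{\overline\sigma(i)}|^2$ up to $O(\varepsilon)$ errors. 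Your limiting procedure ($n\to\infty$ then $\gamma\to\infty$, with no $\varepsilon$) therefore cannot be carried out as stated. The early telescoping replacement of $\prod_i\widehat f(x_i)$ by $\prod_j|\widehat f(y_{2j})|^2$ is a sound idea (the paper performs it on the restricted domain in its Step 4, with the same chaining estimates), but without the scale separation of $\widetilde O^\gamma_m$ and the $\varepsilon$-localization the pairing step --- which is the heart of the proof --- is not established.
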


\begin{proof} The convergence of odd moments follows easily from Lemma  \ref{chain}. So we only need to show the convergence of even moments, which will be done in five steps.

\noindent \textbf{Step 1.} We show that $\I^n_m$ is asymptotically equal to $\widetilde{\I}^n_{m,\gamma}$ defined in (\ref{gamma}).  Let 
\[
\widetilde{O}_{m}=D^n_m\cap \Big\{n^2<\Delta u_{2i-1}<e^{nt}/m,\, n^{-1}<\Delta u_{2i}<n,\, i=1,2,\cdots m/2 \Big\}, 
\]
where $\Delta u_{k}=u_{k}-u_{k-1}$ for $k=1,2,\cdots, m$ with the convention $u_0=0$.

Set
\begin{align*}
\widetilde{\I}^n_{m}
&=\frac{m!}{((2\pi)^d\sqrt{n})^m}\sum_{\sigma\in\mathscr{P}}   \int_{\widetilde{O}_{m}\times \widetilde{O}_{m}} \int_{\R^{md}} \prod^m_{i=1}\widehat{f}(x_i)  \nonumber  \\  
&\qquad\qquad\qquad \times \exp\Big(-\frac{1}{2}\Var\big(\sum\limits^m_{i=1} x_i\cdot (X_{u_i}-\widetilde{X}_{v_{\sigma(i)}})\big)\Big)\, dx\, du\, dv.
\end{align*}
Then, using similar arguments as in {\bf (I)}  {\bf Symmetrization of $|\I_m^n|$ via Cauchy-Schwarz inequality},
\begin{align*} 
|\I^n_m-\widetilde{\I}^n_{m}|
&\leq c_1\, n^{-\frac{m}{2}}\sum_{\sigma\in\mathscr{P}} \int_{\R^{md}} \int_{D^n_m \times D^n_m -\widetilde{O}_{m}\times \widetilde{O}_{m}}  \prod^m_{i=1}|\widehat{f}(x_i)| \\
&\qquad\qquad\qquad \times \exp\Big(-\frac{1}{2}\Var\big(\sum\limits^m_{i=1} x_i\cdot (X_{u_i}-\widetilde{X}_{v_{\sigma(i)}})\big)\Big)\, du\, dv\, dx \\
&\leq c_2\, n^{-\frac{m}{2}} \int_{\R^{md}}  \int_{D^n_m \times D^n_m -\widetilde{O}_{m}\times \widetilde{O}_{m}}  \prod^m_{i=1}|\widehat{f}(y_i-y_{i+1})|  \\  
&\qquad\qquad\qquad \times \exp\Big(-\frac{\kappa}{2}\sum\limits^m_{i=1} |y_i|^2\big[(\Delta u_i)^{2H}+(\Delta v_i)^{2H}\big]\Big)\, du\, dv\, dy.
\end{align*}
Thanks to Lemma \ref{chain}, 
\begin{align} \label{limsup1}
\limsup\limits_{n\to\infty}|\I^n_m-\widetilde{\I}^n_{m}|
&\leq c_3 \limsup\limits_{n\to\infty}   n^{-\frac{m}{2}}  \int_{D^n_m \times D^n_m -\widetilde{O}_{m}\times \widetilde{O}_{m}} \int_{\R^{md}} \prod^{m/2}_{k=1}|\widehat{f}(y_{2k})|^2  \nonumber  \\  
&\qquad\qquad\qquad \times \exp\Big(-\frac{\kappa}{2}\sum\limits^m_{i=1} |y_i|^2\big[(\Delta u_i)^{2H}+(\Delta v_i)^{2H}\big]\Big)\, dy\, du\, dv.
\end{align}

For $\ell=1,2,\cdots, m$, define
\[
E^n_{m,\ell}= \left\{
        \begin{array}{ll}
           D^n_m\cap \{n^{-m}\leq \Delta u_{\ell}\leq n^2\; \text{ or }\; e^{nt}/m\leq \Delta u_{\ell}\leq e^{nt}\}, & \text{if}\; \ell\; \text{is odd}; \\ 
           \\
          D^n_m\cap \{n^{-m}\leq \Delta u_{\ell}\leq n^{-1}\; \text{ or }\; n\leq \Delta u_{\ell}\leq e^{nt}\}, & \text{otherwise}.
        \end{array}
    \right.
    \]
Then, by 
\[
D^n_m -\widetilde{O}_{m}=\bigcup^m_{\ell=1}E^n_{m,\ell}
\]
and the symmetry of $u$ and $v$ in inequality (\ref{limsup1}), 
\begin{align*}
&\limsup\limits_{n\to\infty}|\I^n_m-\widetilde{\I}^n_{m}|\\
\leq &2c_3\sum^{m}_{\ell=1}\limsup\limits_{n\to\infty}   n^{-\frac{m}{2}}  \int_{E^n_{m,\ell} \times D^n_m} \int_{\R^{md}} \prod^{m/2}_{k=1}|\widehat{f}(y_{2k})|^2 \exp\Big(-\frac{\kappa}{2}\sum\limits^m_{i=1} |y_i|^2\big[(\Delta u_i)^{2H}+(\Delta v_i)^{2H}\big]\Big)\, dy\, du\, dv\\
\leq &c_4\sum^{m}_{\ell=1}\limsup\limits_{n\to\infty} \bigg[  n^{-\frac{m}{2}}  \int_{E^n_{m,\ell} \times E^n_{m,\ell}} \int_{\R^{md}} \prod^{m/2}_{k=1}|\widehat{f}(y_{2k})|^2 \exp\Big(-\frac{\kappa}{2}\sum\limits^m_{i=1} |y_i|^2\big[(\Delta u_i)^{2H}+(\Delta v_i)^{2H}\big]\Big)\, dy\, du\, dv\bigg]^{\frac{1}{2}},
\end{align*}
where in the last inequality we use the arguments as in {\bf (I)}  {\bf Symmetrization of $|\I_m^n|$ via Cauchy-Schwarz inequality} and Lemma \ref{chain}.

When $\ell$ is odd, integrating with respect to the $y_i$, $u_i$ and $v_i$ for all $i\neq \ell$ and using Lemmas \ref{lemma1} and \ref{lemma3}, 
\begin{align*}
&\limsup\limits_{n\to\infty} n^{-\frac{m}{2}}  \int_{E^n_{m,\ell} \times E^n_{m,\ell}} \int_{\R^{md}} \prod^{m/2}_{k=1}|\widehat{f}(y_{2k})|^2\exp\Big(-\frac{\kappa}{2}\sum\limits^m_{i=1} |y_i|^2\big[(\Delta u_i)^{2H}+(\Delta v_i)^{2H}\big]\Big)\, dy\, du\, dv\\
&\leq \limsup\limits_{n\to\infty} \frac{c_5}{n}\int_{\R^{d}}  \left(  \left(\int^{n^2}_{n^{-m}}+\int^{e^{nt}}_{e^{nt}/m}\right)\exp\Big(-\frac{\kappa}{2} |y_{\ell}|^2 (\Delta u_\ell)^{2H}\Big) d\Delta u_\ell \right)^2\, dy_\ell\\
&\leq \limsup\limits_{n\to\infty} \frac{c_6}{n} \int^{n^2}_{n^{-m}}\int^{n^2}_{n^{-m}} \int_{\R^{d}} \exp\Big(-\frac{\kappa}{2} |y_{\ell}|^2 [(\Delta u_\ell)^{2H}+(\Delta v_\ell)^{2H}]\Big) dy_\ell \, d\Delta u_\ell d\Delta v_\ell \\
&\qquad+\limsup\limits_{n\to\infty} \frac{c_6}{n} \int^{e^{nt}}_{e^{nt}/m}\int^{e^{nt}}_{e^{nt}/m} \int_{\R^{d}}\exp\Big(-\frac{\kappa}{2} |y_{\ell}|^2 [(\Delta u_\ell)^{2H}+(\Delta v_\ell)^{2H}]\Big)dy_\ell \, d\Delta u_\ell d\Delta v_\ell\\
&\leq \limsup\limits_{n\to\infty} \frac{c_7}{n} \int^{n^2}_{n^{-m}}\int^{n^2}_{n^{-m}} [(\Delta u_\ell)^{2H}+(\Delta v_\ell)^{2H}]^{-\frac{d}{2}}\, d\Delta u_\ell d\Delta v_\ell \\
&\qquad+\limsup\limits_{n\to\infty} \frac{c_7}{n} \int^{e^{nt}}_{e^{nt}/m}\int^{e^{nt}}_{e^{nt}/m} [(\Delta u_\ell)^{2H}+(\Delta v_\ell)^{2H}]^{-\frac{d}{2}}\, d\Delta u_\ell d\Delta v_\ell\\
&=0,
\end{align*}
where in the last equality we use similar arguments as in the proof of Lemma \ref{lemma1}.  

Similarly, when $\ell$ is even, by Lemmas \ref{lemma1} and \ref{lemma3},
\begin{align*}
&\limsup\limits_{n\to\infty} n^{-\frac{m}{2}}  \int_{E^n_{m,\ell} \times E^n_{m,\ell}} \int_{\R^{md}} \prod^{m/2}_{k=1}|\widehat{f}(y_{2k})|^2\exp\Big(-\frac{\kappa}{2}\sum\limits^m_{i=1} |y_i|^2\big[(\Delta u_i)^{2H}+(\Delta v_i)^{2H}\big]\Big)\, dy\, du\, dv\\
&\leq c_8 \limsup\limits_{n\to\infty}   \int_{\R^{d}}  |\widehat{f}(y_{\ell})|^2 \left(  \left(\int^{n^{-1}}_{n^{-m}}+\int^{e^{nt}}_{e^{nt}/m}\right)  \exp\Big(-\frac{\kappa}{2} |y_{\ell}|^2 (\Delta u_\ell)^{2H}\Big) d\Delta u_\ell \right)^2\, dy_\ell\\
&\leq c_9 \limsup\limits_{n\to\infty}  \int^{n^{-1}}_{n^{-m}}\int^{n^{-1}}_{n^{-m}} \int_{\R^{d}} |\widehat{f}(y_{\ell})|^2 \exp\Big(-\frac{\kappa}{2} |y_{\ell}|^2 [(\Delta u_\ell)^{2H}+(\Delta v_\ell)^{2H}]\Big) dy_\ell \, d\Delta u_\ell d\Delta v_\ell \\
&\qquad+c_9 \limsup\limits_{n\to\infty} \int^{e^{nt}}_{n}\int^{e^{nt}}_{n} \int_{\R^{d}} |\widehat{f}(y_{\ell})|^2 \exp\Big(-\frac{\kappa}{2} |y_{\ell}|^2 [(\Delta u_\ell)^{2H}+(\Delta v_\ell)^{2H}]\Big)dy_\ell \, d\Delta u_\ell d\Delta v_\ell\\
&\leq c_{10} \limsup\limits_{n\to\infty} \int^{e^{nt}}_{n}\int^{e^{nt}}_{n} \int_{\R^{d}} |y_{\ell}|^{2\alpha} \exp\Big(-\frac{\kappa}{2} |y_{\ell}|^2 [(\Delta u_\ell)^{2H}+(\Delta v_\ell)^{2H}]\Big)dy_\ell \, d\Delta u_\ell d\Delta v_\ell\\
&\leq c_{11}\limsup\limits_{n\to\infty} \int^{e^{nt}}_{n}\int^{e^{nt}}_{n} [(\Delta u_\ell)^{2H}+(\Delta v_\ell)^{2H}]^{-\frac{d}{2}-\alpha}\, d\Delta u_\ell d\Delta v_\ell\\
&=0,
\end{align*}
where we use Remark \ref{remark1} to get the third inequality and the proof of Lemma \ref{lemma1} in the last equality.  

Therefore, $\limsup\limits_{n\to\infty}|\I^n_m-\widetilde{\I}^n_{m}|=0$. Now for any $\gamma>1$, define
\begin{align} \label{gamma}
\widetilde{\I}^n_{m,\gamma}
&=\frac{m!}{((2\pi)^d\sqrt{n})^m}  \sum_{\sigma\in\mathscr{P}}  \int_{\widetilde{O}^{\gamma}_{m} \times \widetilde{O}^{\gamma}_{m} }  \int_{\R^{md}} \prod^m_{i=1}\widehat{f}(x_i)  \nonumber  \\  
&\qquad\qquad\qquad \times \exp\Big(-\frac{1}{2}\Var\big(\sum\limits^m_{i=1} x_i\cdot (X_{u_i}-\widetilde{X}_{v_{\sigma(i)}})\big)\Big)\, dx\, du\, dv,
\end{align}
where
\[
\widetilde{O}^{\gamma}_{m}=\widetilde{O}_{m}\cap \Big\{\frac{\Delta u_{2i-1}}{\Delta u_{2j-1}}>\gamma\; \text{ or }\; \frac{\Delta u_{2j-1}}{\Delta u_{2i-1}}>\gamma\; \text{for all}\, i,j\in\{1,2,\cdots,m/2\}\; \text{with}\, i\neq j \Big\}.
\]
Then, using similar arguments as in {\bf (I)}  {\bf Symmetrization of $|\I_m^n|$ via Cauchy-Schwarz inequality},
\begin{align*}
|\widetilde{\I}^n_{m}-\widetilde{\I}^n_{m,\gamma}|
&\leq c_{12}\, n^{-\frac{m}{2}}  \sum_{\sigma\in\mathscr{P}} \int_{\widetilde{O}_{m}\times \widetilde{O}_{m}-\widetilde{O}^{\gamma}_{m}\times \widetilde{O}^{\gamma}_{m}} \int_{\R^{md}} \prod^m_{i=1}|\widehat{f}(x_i)|  \nonumber  \\  
&\qquad\qquad\qquad \times \exp\Big(-\frac{1}{2}\Var\big(\sum\limits^m_{i=1} x_i\cdot (X_{u_i}-\widetilde{X}_{v_{\sigma(i)}})\big)\Big)\, dx\, du\, dv\\
&\leq c_{13}\,  n^{-\frac{m}{2}}   \int_{\widetilde{O}_{m}\times \widetilde{O}_{m}-\widetilde{O}^{\gamma}_{m}\times \widetilde{O}^{\gamma}_{m}} \int_{\R^{md}} \prod^m_{i=1}|\widehat{f}(y_i-y_{i+1})|  \nonumber\\  
&\qquad\qquad\qquad \times \exp\Big(-\frac{\kappa}{2}\sum\limits^m_{i=1} |y_i|^2\big[(\Delta u_i)^{2H}+(\Delta v_i)^{2H}\big]\Big)\, dy\, du\, dv\\
&\leq c_{14}\, n^{-\lambda}+c_{13}\,  n^{-\frac{m}{2}}   \int_{\widetilde{O}_{m}\times \widetilde{O}_{m}-\widetilde{O}^{\gamma}_{m}\times \widetilde{O}^{\gamma}_{m}} \int_{\R^{md}} \prod^{\frac{m}{2}}_{j=1}|\widehat{f}(y_{2j})|^2  \nonumber\\  
&\qquad\qquad\qquad \times \exp\Big(-\frac{\kappa}{2}\sum\limits^m_{i=1} |y_i|^2\big[(\Delta u_i)^{2H}+(\Delta v_i)^{2H}\big]\Big)\, dy\, du\, dv,
\end{align*}
where we use Lemma \ref{chain} in the last inequality.

For any odd numbers $k,\ell\in\{1,2,\cdots,m\}$ with $k\neq \ell$, define  
\[
\widetilde{O}^{\gamma}_{m,k,\ell}=\widetilde{O}_{m}\cap \big\{1/\gamma\leq \Delta u_k/\Delta u_{\ell} \leq \gamma\big\}.
\] 
Then, by the symmetry of $u$ and $v$ in the above inequality,
\begin{align*}
&n^{-\frac{m}{2}}   \int_{\widetilde{O}_{m}\times \widetilde{O}_{m}-\widetilde{O}^{\gamma}_{m}\times \widetilde{O}^{\gamma}_{m}} \int_{\R^{md}} \prod^{m/2}_{j=1}|\widehat{f}(y_{2j})|^2\exp\Big(-\frac{\kappa}{2}\sum\limits^m_{i=1} |y_i|^2\big[(\Delta u_i)^{2H}+(\Delta v_i)^{2H}\big]\Big)\, dy\, du\, dv\\
&\leq 2 \sum_{k\neq \ell}   n^{-\frac{m}{2}} \int_{\widetilde{O}^{\gamma}_{m,k,\ell}\times \widetilde{O}_{m}} \int_{\R^{md}} \prod^{m/2}_{j=1}|\widehat{f}(y_{2j})|^2\exp\Big(-\frac{\kappa}{2}\sum\limits^m_{i=1} |y_i|^2\big[(\Delta u_i)^{2H}+(\Delta v_i)^{2H}\big]\Big)\, dy\, du\, dv\\
&\leq c_{15}\sum_{k\neq \ell}  \left[n^{-\frac{m}{2}} \int_{\widetilde{O}^{\gamma}_{m,k,\ell}\times \widetilde{O}^{\gamma}_{m,k,\ell}} \int_{\R^{md}} \prod^{m/2}_{j=1}|\widehat{f}(y_{2j})|^2\exp\Big(-\frac{\kappa}{2}\sum\limits^m_{i=1} |y_i|^2\big[(\Delta u_i)^{2H}+(\Delta v_i)^{2H}\big]\Big)\, dy\, du\, dv\right]^{\frac{1}{2}}.
\end{align*}
Integrating with respect to the $u_i$ and $v_i$ for $i\neq k,\ell$ and  all $y_i$s gives
\begin{align*}
&n^{-\frac{m}{2}} \int_{\widetilde{O}^{\gamma}_{m,k,\ell}\times \widetilde{O}^{\gamma}_{m,k,\ell}} \int_{\R^{md}} \prod^{\frac{m}{2}}_{j=1}|\widehat{f}(y_{2j})|^2\exp\Big(-\frac{\kappa}{2}\sum\limits^m_{i=1} |y_i|^2\big[(\Delta u_i)^{2H}+(\Delta v_i)^{2H}\big]\Big)\, dy\, du\, dv\\
&\leq \frac{c_{16}}{n^2} \int^{e^{nt}}_{n^{-m}} \int^{e^{nt}}_{n^{-m}}  \int^{e^{nt}}_{n^{-m}}  \int^{e^{nt}}_{n^{-m}} \big[(\Delta u_k)^{2H}+(\Delta v_k)^{2H}\big]^{-\frac{d}{2}}\big[(\Delta u_{\ell})^{2H}+(\Delta v_{\ell})^{2H}\big]^{-\frac{d}{2}}\\
&\qquad\qquad\qquad\times 1_{\{1/\gamma\leq \Delta u_k/\Delta u_{\ell} \leq \gamma\}}1_{\{1/\gamma\leq \Delta v_k/\Delta v_{\ell} \leq \gamma\}}\, du_k\, du_{\ell}\, dv_k\, dv_{\ell}.
\end{align*}

Making the change of variables as in the proof of Lemma \ref{lemma1}, we could obtain that the right-hand side of the above inequality is less than a constant multiple of  $(\ln\gamma)/n$. Therefore, 
\[
|\widetilde{\I}^n_{m}-\widetilde{\I}^n_{m,\gamma}|\leq c_{16}\Big(n^{-\lambda}+ \sqrt\frac{\ln \gamma}{ n}\Big).
\]
This implies that $\limsup\limits_{n\to\infty}|\widetilde{\I}^n_m-\widetilde{\I}^n_{m,\gamma}|=0$.

\noindent \textbf{Step 2.} We show that $\widetilde{\I}^n_{m,\gamma}$  is asymptotically equal to $\widetilde{\I}^{n,\varepsilon}_{m,\gamma}$ defined in (\ref{gamma1}).  Making the change of variables $y_i=\sum\limits^m_{j=i}x_j$ for $i=1,2,\cdots, m$ (with the convention $y_{m+1}=0$) gives
\begin{align*}
\widetilde{\I}^n_{m,\gamma}
&=\frac{m!}{((2\pi)^d\sqrt{n})^m}\sum_{\sigma\in\mathscr{P}}  \int_{\widetilde{O}^{\gamma}_{m}\times \widetilde{O}^{\gamma}_{m}} \int_{\R^{md}} \prod^m_{i=1}\widehat{f}(y_i-y_{i+1}) \exp\Big(-\frac{1}{2}\Var\big(\sum\limits^m_{i=1} y_i\cdot (X_{u_i}-X_{u_{i-1}})\big)\Big)\\
&\qquad\qquad\qquad \times \exp\Big(-\frac{1}{2}\Var\big(\sum\limits^m_{i=1} \sum^m_{j=i} ( y_{\sigma(j)}-y_{\sigma(j)+1} )\cdot \big(X_{v_i}-X_{v_{i-1}})\big)\Big)\, dy\, du\, dv.
\end{align*}

For any $\varepsilon\in(0,1)$, define 
\begin{align} \label{gamma1}
\widetilde{\I}^{n,\varepsilon}_{m,\gamma}
&=\frac{m!}{((2\pi)^d\sqrt{n})^m} \sum_{\sigma \in \mathscr{P}} \int_{\widetilde{O}^{\gamma}_{m}\times \widetilde{O}^{\gamma}_{m}} \int_{T^{\sigma}_{\varepsilon}} \prod^m_{i=1}\widehat{f}(y_i-y_{i+1}) \exp\Big(-\frac{1}{2}\Var\big(\sum\limits^m_{i=1} y_i\cdot (X_{u_i}-X_{u_{i-1}})\big)\Big) \nonumber \\
&\qquad\qquad\qquad \times \exp\Big(-\frac{1}{2}\Var\big(\sum\limits^m_{i=1} \sum^m_{j=i} ( y_{\sigma(j)}-y_{\sigma(j)+1} )\cdot \big(X_{v_i}-X_{v_{i-1}})\big)\Big)\, dy\, du\, dv,
\end{align}
where 
\begin{align} \label{tsv}
T^{\sigma}_{\varepsilon}=\R^{md}\cap \bigg\{\left|y_{2k-1}\right|<\varepsilon,  ~ \bigg|\sum^m_{j={2k-1}} ( y_{\sigma(j)}-y_{\sigma(j)+1})\bigg|<\varepsilon,\; k=1,2,\cdots, m/2\bigg\}.
\end{align}

Let
\begin{align*}
T_{\sigma,\varepsilon}=\R^{md}\cap \bigg\{\bigg|\sum^m_{j={2k-1}} ( y_{\sigma(j)}-y_{\sigma(j)+1})\bigg|<\varepsilon,\; k=1,2,\cdots, m/2\bigg\}
\end{align*}
and
\begin{align*}
T_{\varepsilon}=\R^{md}\cap \big\{ |y_{2k-1}|<\varepsilon,\; k=1,2,\cdots, m/2\big\}.
\end{align*}
Then $T^{\sigma}_{\varepsilon}=T_{\varepsilon}\cap T_{\sigma,\varepsilon}$. This implies that
\[
\R^{md}-T^{\sigma}_{\varepsilon}=\left(\R^{md}-T_{\varepsilon} \right)\cup \left(\R^{md}-T_{\sigma,\varepsilon} \right).
\]
Therefore, 
\begin{align*}
&|\widetilde{\I}^n_{m,\gamma}-\widetilde{\I}^{n,\varepsilon}_{m,\gamma}|\\
&\leq c_{18}\, n^{-\frac{m}{2}}  \sum_{\sigma \in \mathscr{P}} \int_{\widetilde{O}^{\gamma}_{m} \times \widetilde{O}^{\gamma}_{m}} \int_{\R^{md}-T^{\sigma}_{\varepsilon}} \prod^m_{i=1}|\widehat{f}(y_i-y_{i+1})| \exp\Big(-\frac{1}{2}\Var\big(\sum\limits^m_{i=1} y_i\cdot (X_{u_i}-X_{u_{i-1}})\big)\Big)\\
&\qquad\qquad\qquad \times \exp\Big(-\frac{1}{2}\Var\big(\sum\limits^m_{i=1} \sum^m_{j=i} ( y_{\sigma(j)}-y_{\sigma(j)+1} )\cdot \big(X_{v_i}-X_{v_{i-1}})\big)\Big)\, dy\, du\, dv\\
&\leq c_{18}\, n^{-\frac{m}{2}}  \sum_{\sigma \in \mathscr{P}} \int_{\widetilde{O}^{\gamma}_{m} \times \widetilde{O}^{\gamma}_{m}} \int_{\R^{md}-T_{\varepsilon}} \prod^m_{i=1}|\widehat{f}(y_i-y_{i+1})| \exp\Big(-\frac{1}{2}\Var\big(\sum\limits^m_{i=1} y_i\cdot (X_{u_i}-X_{u_{i-1}})\big)\Big)\\
&\qquad\qquad\qquad \times \exp\Big(-\frac{1}{2}\Var\big(\sum\limits^m_{i=1} \sum^m_{j=i} ( y_{\sigma(j)}-y_{\sigma(j)+1} )\cdot \big(X_{v_i}-X_{v_{i-1}})\big)\Big)\, dy\, du\, dv\\
&\qquad+c_{18}\, n^{-\frac{m}{2}}  \sum_{\sigma \in \mathscr{P}} \int_{\widetilde{O}^{\gamma}_{m} \times \widetilde{O}^{\gamma}_{m}} \int_{\R^{md}-T_{\sigma,\varepsilon}} \prod^m_{i=1}|\widehat{f}(y_i-y_{i+1})| \exp\Big(-\frac{1}{2}\Var\big(\sum\limits^m_{i=1} y_i\cdot (X_{u_i}-X_{u_{i-1}})\big)\Big)\\
&\qquad\qquad\qquad \times \exp\Big(-\frac{1}{2}\Var\big(\sum\limits^m_{i=1} \sum^m_{j=i} ( y_{\sigma(j)}-y_{\sigma(j)+1} )\cdot \big(X_{v_i}-X_{v_{i-1}})\big)\Big)\, dy\, du\, dv.
\end{align*}
Using similar arguments as in {\bf (I)} {\bf Symmetrization of $|\I_m^n|$ via Cauchy-Schwarz inequality},
\begin{align*}
&|\widetilde{\I}^n_{m,\gamma}-\widetilde{\I}^{n,\varepsilon}_{m,\gamma}|\\
&\leq c_{19}\, n^{-\frac{m}{2}} \int_{\widetilde{O}^{\gamma}_{m} \times \widetilde{O}^{\gamma}_{m}} \int_{\R^{md}-T_{\varepsilon}} \prod^m_{i=1}|\widehat{f}(y_i-y_{i+1})| \exp\Big(-\frac{1}{2}\Var\big(\sum\limits^m_{i=1} y_i\cdot (X_{u_i}-X_{u_{i-1}})\big)\Big) \\
&\qquad\qquad\qquad\qquad\qquad\qquad\qquad \times \exp\Big(-\frac{1}{2}\Var\big(\sum\limits^m_{i=1} y_i \cdot \big(X_{v_i}-X_{v_{i-1}})\big)\Big)\, dy\, du\, dv.
\end{align*}
Now, by Assumption (B) and Lemma  \ref{chain}, $|\widetilde{\I}^n_{m,\gamma}-\widetilde{\I}^{n,\varepsilon}_{m,\gamma}|$ is less than
\begin{align*}
&c_{20}\, n^{-\lambda}+c_{20}\, n^{-\frac{m}{2}} \int_{\widetilde{O}^{\gamma}_{m}\times \widetilde{O}^{\gamma}_{m}}\int_{\R^{md}-T_{\varepsilon}} \prod^{m/2}_{j=1}|\widehat{f}(y_{2j})|^2\\
&\qquad\qquad\qquad\qquad\qquad\qquad \times \exp\Big(-\frac{\kappa}{2}\sum\limits^m_{i=1} |y_i|^2\big[(\Delta u_i)^{2H}+(\Delta v_i)^{2H}\big]\Big)\, dy \, du\, dv\\
&\leq c_{20}\, n^{-\lambda}+c_{21}\, n^{-1} \int^{e^{nt}/m}_{n^2}\int^{e^{nt}/m}_{n^2}\int_{|x|\geq \varepsilon} \exp\Big(-\frac{\kappa}{2}|x|^2(s^{2H}+t^{2H})\Big) \, dx\, ds\, dt\\
&\leq c_{20}\, n^{-\lambda}+c_{22}\, n^{-1} e^{-\frac{\kappa}{2}\varepsilon^2 n^{4H}} \int^{e^{nt}/m}_{n^2}\int^{e^{nt}/m}_{n^2}\int_{|x|\geq \varepsilon} \exp\Big(-\frac{\kappa}{4}|x|^2(s^{2H}+t^{2H})\Big) \, dx\, ds\, dt\\
&\leq c_{20}\, n^{-\lambda}+c_{23}\,e^{-\frac{\kappa}{2}\varepsilon^2 n^{4H}}.  
\end{align*}
This gives $\limsup\limits_{n\to\infty} |\widetilde{\I}^n_{m,\gamma}-\widetilde{\I}^{n,\varepsilon}_{m,\gamma}|=0$.
 
\noindent \textbf{Step 3.} Recall the change of variables $y_i=\sum\limits^m_{j=i}x_j$ for $i=1,2,\cdots, m$. We see that $\widetilde{\I}^{n,\varepsilon}_{m,\gamma}$ can also be written as
\begin{align*} 
\widetilde{\I}^{n,\varepsilon}_{m,\gamma}
&=\frac{m!}{((2\pi)^d\sqrt{n})^m} \sum_{\sigma\in\mathscr{P}}\int_{\widetilde{O}^{\gamma}_{m} \times \widetilde{O}^{\gamma}_{m}} \int_{\overline{T}^{\sigma}_{\varepsilon}}  \prod^m_{i=1}\widehat{f}(x_i) \exp\Big(-\frac{1}{2}\Var\big(\sum\limits^m_{i=1} \sum\limits^m_{j=i}x_j \cdot (X_{u_i}-X_{u_{i-1}})\big)\Big) \nonumber \\
&\qquad\qquad\qquad \times \exp\Big(-\frac{1}{2}\Var\big(\sum\limits^m_{i=1} \sum^m_{j=i} x_{\sigma(j)} \cdot \big(X_{v_i}-X_{v_{i-1}})\big)\Big)\, dx\, du\, dv,
\end{align*}
where 
\begin{align} \label{domain}
\overline{T}^{\sigma}_{\varepsilon}=\R^{md}\cap \Big\{\Big|\sum\limits^m_{j=2k-1}x_j\Big|<\varepsilon,\;  \Big|\sum^m_{j={2k-1}} x_{\sigma(j)}\Big|<\varepsilon,\; k=1,2,\cdots, m/2\Big\}.
\end{align}

For any $\sigma\in\mathscr{P}$, define 
\begin{align} \label{ctr}
\mathscr{P}_1=\{\sigma\in\mathscr{P}:\; \# A(\sigma)=m/2\} ~~\text{ and  } ~~ \mathscr{P}_0=\mathscr{P}-\mathscr{P}_1,
\end{align}
 where
\begin{align*}
A(\sigma)=\Big\{\{2k,2k-1\},\, k=1,2,\cdots, m/2\Big\}\cap \Big\{\{\sigma(2k),\sigma(2k-1)\},\, k=1,2,\cdots, m/2\Big\}.
\end{align*}

For any $\sigma\in\mathscr{P}$, let 
\begin{align*} 
\widetilde{\I}^{n,\varepsilon,\sigma}_{m,\gamma}
&=\frac{m!}{((2\pi)^d\sqrt{n})^m} \int_{\widetilde{O}^{\gamma}_{m}\times \widetilde{O}^{\gamma}_{m}} \int_{\overline{T}^{\sigma}_{\varepsilon}}  \prod^m_{i=1}\widehat{f}(x_i) \exp\Big(-\frac{1}{2}\Var\big(\sum\limits^m_{i=1} \sum\limits^m_{j=i}x_j \cdot (X_{u_i}-X_{u_{i-1}})\big)\Big) \nonumber \\
&\qquad \qquad \qquad\qquad\qquad \times \exp\Big(-\frac{1}{2}\Var\big(\sum\limits^m_{i=1} \sum^m_{j=i} x_{\sigma(j)} \cdot \big(X_{v_i}-X_{v_{i-1}})\big)\Big)\, dx\, du\, dv.
\end{align*}

In the following, we will show the asymptotic behavior  of $\widetilde{\I}^{n,\varepsilon,\sigma}_{m,\gamma}$ when $\sigma\in \mathscr{P}_0$ and $\sigma\in \mathscr{P}_1$, respectively.  

For any $\sigma\in \mathscr{P}_0$, there exist $j, k,\ell\in\{1,2,\cdots, m/2\}$ with $k\neq \ell$ such that 
\begin{align} \label{permutation}
\sigma(2j)\in\{2k,2k-1\}\; \text{and}\; \sigma(2j-1)\in\{2\ell,2\ell-1\}.
\end{align}
By the definition of $\overline{T}^{\sigma}_{\varepsilon}$ in (\ref{domain}). For any $i=1,2,\dots, m/2$,
\begin{align} \label{paring}
|x_{2i}+x_{2i-1}|\leq (m/2-i+1)\varepsilon\quad \text{and}\quad |x_{\sigma(2i)}+x_{\sigma(2i-1)}|\leq (m/2-i+1)\varepsilon.
\end{align}
We claim that 
\begin{align*} 
|x_{2k}-x_{2\ell}|\leq 2m\varepsilon \quad \text{or}\quad |x_{2k}+x_{2\ell}|\leq 2m\varepsilon. 
\end{align*}
In fact, from (\ref{permutation}) there are four possibilities for the values of $\sigma(2j)$ and $\sigma(2j-1)$: (1) $\sigma(2j)=2k$ and $\sigma(2j-1)=2\ell$; (2) $\sigma(2j)=2k$ and $\sigma(2j-1)=2\ell-1$; (3) $\sigma(2j)=2k-1$ and $\sigma(2j-1)=2\ell$;  (4) $\sigma(2j)=2k-1$ and $\sigma(2j-1)=2\ell-1$. In this first case, the claim follows from (\ref{paring}) directly. In the second and the third cases, 
\begin{align*} 
|x_{2k}-x_{2\ell}|\leq |x_{2k}-(-1)^{\sigma(2j)}x_{\sigma(2j)}|+|x_{\sigma(2j)}+x_{\sigma(2j-1)}|+|(-1)^{\sigma(2j-1)}x_{\sigma(2j-1)}-x_{2\ell}|\leq 2m\varepsilon.
\end{align*}
In the last case,
\begin{align*} 
|x_{2k}+x_{2\ell}|\leq |x_{2k}+x_{\sigma(2j)}|+|x_{\sigma(2j)}+x_{\sigma(2j-1)}|+|x_{\sigma(2j-1)}+x_{2\ell}|\leq 2m\varepsilon.
\end{align*}

We next show that 
\[
|y_{2k}-y_{2\ell}|\leq 4m\varepsilon \quad \text{or}\quad |y_{2k}-y_{2\ell}|\leq 4m\varepsilon.
\] 
Without loss of generality, we can assume that $k<\ell$. Then
\begin{align*}
|y_{2k}-y_{2\ell}|
&=\bigg|\sum^{2\ell}_{j=2k+1} x_j+x_{2k}-x_{2\ell}\bigg|\leq 4m\varepsilon
\end{align*}
if $|x_{2k}-x_{2\ell}|\leq 2m\varepsilon$, and 
\begin{align*}
|y_{2k}+y_{2\ell}|
&=|2\sum^{m}_{j=2\ell+1} x_j+\sum^{2\ell}_{j=2k+1} x_j+x_{2k}+x_{2\ell}|\leq 4m\varepsilon
\end{align*}
if $|x_{2k}+x_{2\ell}|\leq 2m\varepsilon$.
   
Using similar arguments as in {\bf (I)} {\bf Symmetrization of $|\I_m^n|$ via Cauchy-Schwarz inequality} and then Lemma \ref{chain},
\begin{align*} 
|\widetilde{\I}^{n,\varepsilon,\sigma}_{m,\gamma}|&\leq c_{24}\,\sum_{1\leq k\neq \ell\leq m/2} n^{-\frac{m}{2}}  \int_{\widetilde{O}^{\gamma}_{m} \times \widetilde{O}^{\gamma}_{m}} \int_{\overline{T}^{\sigma}_{\varepsilon}} \prod^m_{i=1}|\widehat{f}(x_i)| \exp\Big(-\frac{1}{2}\Var\big(\sum\limits^m_{i=1} \sum\limits^m_{j=i}x_j \cdot (X_{u_i}-X_{u_{i-1}})\big)\Big) \nonumber \\
&\qquad\qquad\qquad \times \exp\Big(-\frac{1}{2}\Var\big(\sum\limits^m_{i=1} \sum^m_{j=i} x_{j} \cdot \big(X_{v_i}-X_{v_{i-1}})\big)\Big)\, dx\, du\, dv\\
&\leq c_{25}\, n^{-\lambda}+c_{25}\,\sum_{1\leq k\neq \ell\leq m/2}\, n^{-\frac{m}{2}}  \int_{\widetilde{O}^{\gamma}_{m} \times \widetilde{O}^{\gamma}_{m}} \int_{\R^{md}} \prod^{m/2}_{j=1}|\widehat{f}(y_{2j})|^2\\
&\qquad\qquad\qquad \times \exp\Big(-\frac{\kappa}{2}\sum\limits^m_{i=1} |y_i|^2 [(\Delta u_i)^{2H}+(\Delta v_i)^{2H}]\Big) \, 1_{\{|y_{2k}\pm y_{2\ell}|\leq 4m\varepsilon\}}\, dy\, du\, dv.
\end{align*}
This yields, by Lemmas \ref{lemma1} and \ref{lemma3},
\begin{align} \label{ppp1}
\limsup\limits_{n\to\infty}|\widetilde{\I}^{n,\varepsilon,\sigma}_{m,\gamma}|\leq c_{26}\, \int_{\R^{2d}} |\widehat{f}(x)|^2 |\widehat{f}(y)|^2|x|^{-d}|y|^{-d}\, 1_{\{|x-y|\leq 4m\varepsilon\}}\, dx\, dy,~ \text{ for } \sigma\in \mathscr P_0.
\end{align}
Note that the integral on the right hand side goes to zero as $\varepsilon$ tends to zero.

Observe that 
\begin{align} \label{gamma10}
\widetilde{\I}^{n,\varepsilon,\sigma}_{m,\gamma}
&=\frac{m!}{((2\pi)^d\sqrt{n})^m}  \int_{\widetilde{O}^{\gamma}_{m}\times \widetilde{O}^{\gamma}_{m}} \int_{T^{\sigma}_{\varepsilon}} \prod^m_{i=1}\widehat{f}(y_i-y_{i+1}) \exp\Big(-\frac{1}{2}\Var\big(\sum\limits^m_{i=1} y_i\cdot (X_{u_i}-X_{u_{i-1}})\big)\Big) \nonumber \\
&\qquad\qquad\qquad \times \exp\Big(-\frac{1}{2}\Var\big(\sum\limits^m_{i=1} \sum^m_{j=i} ( y_{\sigma(j)}-y_{\sigma(j)+1} )\cdot \big(X_{v_i}-X_{v_{i-1}})\big)\Big)\, dy\, du\, dv.
\end{align}
Recall $T^{\sigma}_{\varepsilon}$ in (\ref{tsv}). Let $T^{\sigma}_{\varepsilon,1}=T^{\sigma}_{\varepsilon}-T^{\sigma}_{\varepsilon,2}$ where 
\[
T^{\sigma}_{\varepsilon,2}=T^{\sigma}_{\varepsilon}\cap \left\{|y_{2i}|>\gamma\varepsilon: i=1,2,\cdots,m/2\right\}.
\]  

Define 
\begin{align*} 
\widetilde{\I}^{n,\varepsilon,\sigma}_{m,\gamma,1}
&=\frac{m!}{((2\pi)^d\sqrt{n})^m}  \int_{\widetilde{O}^{\gamma}_{m}\times \widetilde{O}^{\gamma}_{m}} \int_{T^{\sigma}_{\varepsilon,1}} \prod^m_{i=1}\widehat{f}(y_i-y_{i+1}) \exp\Big(-\frac{1}{2}\Var\big(\sum\limits^m_{i=1} y_i\cdot (X_{u_i}-X_{u_{i-1}})\big)\Big) \nonumber \\
&\qquad\qquad\qquad \times \exp\Big(-\frac{1}{2}\Var\big(\sum\limits^m_{i=1} \sum^m_{j=i} ( y_{\sigma(j)}-y_{\sigma(j)+1} )\cdot \big(X_{v_i}-X_{v_{i-1}})\big)\Big)\, dy\, du\, dv,
\end{align*}
\begin{align*} 
\widetilde{\I}^{n,\varepsilon,\sigma}_{m,\gamma,2}
&=\frac{m!}{((2\pi)^d\sqrt{n})^m} \int_{\widetilde{O}^{\gamma}_{m}\times \widetilde{O}^{\gamma}_{m}} \int_{T^{\sigma}_{\varepsilon,2}} \prod^m_{i=1}\widehat{f}(y_i-y_{i+1}) \exp\Big(-\frac{1}{2}\Var\big(\sum\limits^m_{i=1} y_i\cdot (X_{u_i}-X_{u_{i-1}})\big)\Big) \nonumber \\
&\qquad\qquad\qquad \times \exp\Big(-\frac{1}{2}\Var\big(\sum\limits^m_{i=1} \sum^m_{j=i} ( y_{\sigma(j)}-y_{\sigma(j)+1} )\cdot \big(X_{v_i}-X_{v_{i-1}})\big)\Big)\, dy\, du\, dv
\end{align*}
and 
\begin{align*} 
\widetilde{\I}^{n,\varepsilon,\sigma}_{m,\gamma,3}
&=\frac{m!}{((2\pi)^d\sqrt{n})^m} \int_{\widetilde{O}^{\gamma}_{m}\times \widetilde{O}^{\gamma}_{m}} \int_{T^{\sigma}_{\varepsilon,2}}  \prod^{m/2}_{j=1}|\widehat{f}(y_{2j})|^2 \exp\Big(-\frac{1}{2}\Var\big(\sum\limits^m_{i=1} y_i\cdot (X_{u_i}-X_{u_{i-1}})\big)\Big) \nonumber \\
&\qquad\qquad\qquad \times \exp\Big(-\frac{1}{2}\Var\big(\sum\limits^m_{i=1} \sum^m_{j=i} ( y_{\sigma(j)}-y_{\sigma(j)+1} )\cdot \big(X_{v_i}-X_{v_{i-1}})\big)\Big)\, dy\, du\, dv.
\end{align*}
Obviously, $\widetilde{\I}^{n,\varepsilon,\sigma}_{m,\gamma}=\widetilde{\I}^{n,\varepsilon,\sigma}_{m,\gamma,1}+\widetilde{\I}^{n,\varepsilon,\sigma}_{m,\gamma,2}$.

\noindent \textbf{Step 4.}  For any $\sigma\in \mathscr{P}_1$, we will show that $\limsup\limits_{n\to\infty} |\widetilde{\I}^{n,\varepsilon,\sigma}_{m,\gamma,1}|$ is less than a constant multiple of $\int_{|y|\leq \gamma\varepsilon}|\widehat{f}(y)|^2|y|^{-d}\, dy$ and $\limsup\limits_{n\to\infty} |\widetilde{\I}^{n,\varepsilon,\sigma}_{m,\gamma,2}-\widetilde{\I}^{n,\varepsilon,\sigma}_{m,\gamma,3}|=0$ when $\gamma$ is large enough.

Recall the definition of $\mathscr{P}_1$ in (\ref{ctr}). It is easy to see that $\# \mathscr{P}_1=2^{\frac{m}{2}}(\frac{m}{2})!$. Moreover, for any $\sigma\in \mathscr{P}_1$,  the expression of summation $\sum^m_{j=i} ( y_{\sigma(j)}-y_{\sigma(j)+1})$ 
on the right-hand side of (\ref{gamma10})  after simplification only has two possibilities. One is that it consists of only variables $y$ with odd indices when $i$ is odd, and the other is that among the variables $y$ in its expression,  there is only one variable $y$ with even index when $i$ is even. Note that all variables $y$ with odd indices are in the ball centered at the origin with radius $\varepsilon$ and $\varepsilon$ is a positive constant which can be chosen arbitrarily small.  

For any $\sigma\in \mathscr{P}_1$, using similar arguments as in {\bf (I)} {\bf Symmetrization of $|\I_m^n|$ via Cauchy-Schwarz inequality} and Lemma \ref{chain},
\begin{align}  \label{ppp2}
\limsup\limits_{n\to\infty} |\widetilde{\I}^{n,\varepsilon,\sigma}_{m,\gamma,1}|
&\leq c_{27} \sum\limits^m_{i=1, i: even} \int_{|y_{\overline{\sigma}(i)}|\leq \gamma \varepsilon}|\widehat{f}(y_{\overline{\sigma}(i)})|^2|y_{\overline{\sigma}(i)}|^{-d}\, dy_{\overline{\sigma}(i)}\nn\\
&\leq c_{28} \int_{|y|\leq \gamma\varepsilon}|\widehat{f}(y)|^2|y|^{-d}\, dy,
\end{align}
where $\overline{\sigma}(i)=\sigma(i)$ if $\sigma(i)$ is even and  $\sigma(i-1)$ otherwise.

Define
\begin{align*} 
\widetilde{\J}^{n,\varepsilon,\sigma}_{m,\gamma,2}
&=\frac{m!}{((2\pi)^d\sqrt{n})^m}  \int_{\widetilde{O}^{\gamma}_{m}\times \widetilde{O}^{\gamma}_{m}} \int_{T^{\sigma}_{\varepsilon,2,\gamma}} \prod^m_{i=1}\widehat{f}(y_i-y_{i+1}) \exp\Big(-\frac{1}{2}\Var\big(\sum\limits^m_{i=1} y_i\cdot (X_{u_i}-X_{u_{i-1}})\big)\Big) \nonumber \\
&\qquad\qquad\qquad \times \exp\Big(-\frac{1}{2}\Var\big(\sum\limits^m_{i=1} \sum^m_{j=i} ( y_{\sigma(j)}-y_{\sigma(j)+1})\cdot \big(X_{v_i}-X_{v_{i-1}})\big)\Big)\, dy\, du\, dv
\end{align*}
and
\begin{align*} 
\widetilde{\J}^{n,\varepsilon,\sigma}_{m,\gamma,3}
&=\frac{m!}{((2\pi)^d\sqrt{n})^m}   \int_{\widetilde{O}^{\gamma}_{m}\times \widetilde{O}^{\gamma}_{m}} \int_{T^{\sigma}_{\varepsilon,2,\gamma}} \prod^{m/2}_{j=1}|\widehat{f}(y_{2j})|^2\exp\Big(-\frac{1}{2}\Var\big(\sum\limits^m_{i=1} y_i\cdot (X_{u_i}-X_{u_{i-1}})\big)\Big) \nonumber \\
&\qquad\qquad\qquad \times \exp\Big(-\frac{1}{2}\Var\big(\sum\limits^m_{i=1} \sum^m_{j=i} ( y_{\sigma(j)}-y_{\sigma(j)+1} )\cdot \big(X_{v_i}-X_{v_{i-1}})\big)\Big)\, dy\, du\, dv,
\end{align*}
where
\[
T^{\sigma}_{\varepsilon,2,\gamma}=T^{\sigma}_{\varepsilon,2}-\bigcup_{i\neq j\in\{2k-1: k=1,2,\cdots,m/2\}}\big\{|y_j|/\gamma<|y_i|<\gamma |y_j|\big\}.
\]

Now, for any $\sigma\in \mathscr{P}_1$, using similar arguments as in obtaining \eref{jm2} with the help of {\bf (I)} {\bf Symmetrization of $|\I_m^n|$ via Cauchy-Schwarz inequality} and Lemma \ref{chain}, we get
\begin{align*}  \label{jjj}
\limsup\limits_{n\to\infty}|\widetilde{\I}^{n,\varepsilon, \sigma}_{m,\gamma,2}-\widetilde{\J}^{n,\varepsilon, \sigma}_{m,\gamma,2}|=0\quad\text{and}\quad \limsup\limits_{n\to\infty}|\widetilde{\I}^{n,\varepsilon, \sigma}_{m,\gamma,3}-\widetilde{\J}^{n,\varepsilon, \sigma}_{m,\gamma,3}|=0
\end{align*}
provided that  $\gamma$ is large enough.

Next we estimate $|\widetilde{\J}^{n,\varepsilon, \sigma}_{m,\gamma,2}-\widetilde{\J}^{n,\varepsilon, \sigma}_{m,\gamma,3}|.$  
\begin{align*}
|\widetilde{\J}^{n,\varepsilon, \sigma}_{m,\gamma,2}-\widetilde{\J}^{n,\varepsilon, \sigma}_{m,\gamma,3}|
&\leq \frac{m!}{((2\pi)^d\sqrt{n})^m} \int_{\widetilde{O}^{\gamma}_{m}\times \widetilde{O}^{\gamma}_{m}} \int_{T^{\sigma}_{\varepsilon,2,\gamma}} \Big|\prod^m_{i=1}\widehat{f}(y_i-y_{i+1})-\prod^{m/2}_{j=1}|\widehat{f}(y_{2j})|^2\Big|\\
&\qquad\times \exp\Big(-\frac{1}{2}\Var\big(\sum\limits^m_{i=1} y_i\cdot (X_{u_i}-X_{u_{i-1}})\big)\Big) \nonumber \\
&\qquad\qquad \times \exp\Big(-\frac{1}{2}\Var\big(\sum\limits^m_{i=1} \sum^m_{j=i} ( y_{\sigma(j)}-y_{\sigma(j)+1} )\cdot \big(X_{v_i}-X_{v_{i-1}})\big)\Big)\, dy\, du\, dv.
\end{align*}

For $\gamma$ large enough,  on $T^{\sigma}_{\varepsilon,2, \gamma}$,
\begin{align*}
(1-\frac{m}{\gamma})\sup_{j\in A_i^\sigma}|y_{j}|\leq \Big|\sum\limits^m_{j=i} ( y_{\sigma(j)}-y_{\sigma(j)+1})\Big|,
\end{align*}
where 
\[
A^{\sigma}_i=\big\{\sigma(i), \cdots, \sigma(m)\big\}\Delta \big\{\sigma(i)+1, \cdots, \sigma(m)+1\big\}.
\]
So, by Assumption (B),
\begin{align*} 
|\widetilde{\J}^{n,\varepsilon, \sigma}_{m,\gamma,2}-\widetilde{\J}^{n,\varepsilon, \sigma}_{m,\gamma,3}|
&\leq \frac{m!}{((2\pi)^d\sqrt{n})^m} \int_{\widetilde{O}^{\gamma}_{m}\times \widetilde{O}^{\gamma}_{m}} \int_{T^{\sigma}_{\varepsilon,2,\gamma}} \Big|\prod^m_{i=1}\widehat{f}(y_i-y_{i+1})-\prod^{m/2}_{j=1}|\widehat{f}(y_{2j})|^2\Big| \nonumber\\
&\qquad\times  \exp\Big(-\frac{\kappa}{2}\sum\limits^m_{i=1} |y_i|^2(u_i-u_{i-1})^{2H}\Big) \nonumber \\
&\qquad\qquad \times \exp\Big(-\frac{\kappa}{2}(1-\frac{m}{\gamma})^2\sum\limits^m_{i=1} \sup_{j\in A^{\sigma}_i} |y_j|^2(v_i-v_{i-1})^{2H}\Big).
\end{align*}

By Lemma \ref{lemma7}, there exists $\widetilde \sigma\in\mathscr P$ such that $\widetilde\sigma(j)$ and $j$ have the same parity for all $j=1,2,\cdots,m$, and
\begin{align*}
|\widetilde{\J}^{n,\varepsilon, \sigma}_{m,\gamma,2}-\widetilde{\J}^{n,\varepsilon, \sigma}_{m,\gamma,3}|
&\leq \frac{m!}{((2\pi)^d\sqrt{n})^m} \int_{\widetilde{O}^{\gamma}_{m}\times \widetilde{O}^{\gamma}_{m}} \int_{T^{\sigma}_{\varepsilon,2,\gamma}} \Big|\prod^m_{i=1}\widehat{f}(y_i-y_{i+1})-\prod^{m/2}_{j=1}|\widehat{f}(y_{2j})|^2\Big|\nn\\
&\qquad\times  \exp\Big(-\frac{\kappa}{8}\sum\limits^m_{i=1} |y_i|^2(u_i-u_{i-1})^{2H}-\frac{\kappa}{8}\sum\limits^m_{i=1} |y_{\widetilde{\sigma}(i)}|^2(v_i-v_{i-1})^{2H}\Big)\, dy\, du\, dv.
\end{align*}

Now, using similar arguments as in the proof of Lemma \ref{chain} to the right hand side of the above inequality, we can get
\[
\limsup\limits_{n\to\infty}|\widetilde{\J}^{n,\varepsilon, \sigma}_{m,\gamma,2}-\widetilde{\J}^{n,\varepsilon, \sigma}_{m,\gamma,3}|=0.
\]
Therefore, $\limsup\limits_{n\to\infty} |\widetilde{\I}^{n,\varepsilon,\sigma}_{m,\gamma,2}-\widetilde{\I}^{n,\varepsilon,\sigma}_{m,\gamma,3}|=0$ when $\gamma$ is large enough.

\noindent \textbf{Step 5.}  We obtain the limit of $\I^{n}_{m}$ as $n$ tends to $\infty$.  By Assumptions (C1) and (C2),
\[
\Var\big(\sum\limits^m_{i=1} y_i\cdot (X_{u_i}-X_{u_{i-1}})\big)
\]
is between $\underline{\beta}(\gamma,n)\sum\limits^m_{i=1} |y_i|^2 \E(X^1_{u_i}-X^1_{u_{i-1}})^2$ and $\overline{\beta}(\gamma,n)\sum\limits^m_{i=1} |y_i|^2 \E(X^1_{u_i}-X^1_{u_{i-1}})^2$, where $\underline{\beta}(\gamma,n)=1-c_{29}\beta_1(\gamma)-c_{29}\beta_2(n)$ and $\overline{\beta}(\gamma,n)=1+c_{30}\beta_1(\gamma)+c_{30}\beta_2(n)$.

In the sequel, we always assume that $\gamma$ is very large. Note that
\begin{align*}  
\limsup\limits_{n\to\infty}\widetilde{\I}^{n,\varepsilon, \sigma}_{m,\gamma,3}
&\leq \limsup\limits_{n\to\infty}\frac{m!}{((2\pi)^d\sqrt{n})^m}  \int_{\widetilde{O}^{\gamma}_{m} \times \widetilde{O}^{\gamma}_{m}} \int_{T^{\sigma}_{\varepsilon,2}} \prod^{m/2}_{j=1}|\widehat{f}(y_{2j})|^2 \nonumber\\
&\qquad\times \exp\Big(-\frac{\underline{\beta}(\gamma,n)}{2}\sum\limits^m_{i=1} |y_i|^2 \E(X^1_{u_i}-X^1_{u_{i-1}})^2\Big) \nonumber \\
&\qquad\times \exp\Big(-\frac{\underline{\beta}(\gamma,n)}{2}\sum\limits^m_{i=1} |\sum^m_{j=i} ( y_{\sigma(j)}-y_{\sigma(j)+1})|^2\E(X^1_{v_i}-X^1_{v_{i-1}})^2\Big)\, dy\, du\, dv\\
&\leq \limsup\limits_{n\to\infty}\frac{m!}{((2\pi)^d\sqrt{n})^m}  \int_{\widetilde{O}^{\gamma}_{m} \times \widetilde{O}^{\gamma}_{m}} \int_{T^{\sigma}_{\varepsilon}} \prod^{m/2}_{j=1}|\widehat{f}(y_{2j})|^2\\
&\qquad  \times \exp\Big(-\frac{\underline{\beta}(\gamma,n)}{2}\sum\limits^m_{i=1} |y_i|^2 \E(X^1_{u_i}-X^1_{u_{i-1}})^2\Big) \nonumber \\
&\qquad \times \exp\Big(-\frac{\underline{\beta}(\gamma,n)}{2}(1-\frac{m}{\gamma}) \sum\limits^m_{i=1, i: even} |y_{\overline{\sigma}(i)}|^2 \E(X^1_{v_i}-X_{v_{i-1}})^2\Big)\\
&\qquad\times \exp\Big(-\frac{\underline{\beta}(\gamma,n)}{2}\sum\limits^m_{i=1, i: odd} \sum^m_{j=i} ( y_{\sigma(j)}-y_{\sigma(j)+1})\E(X^1_{v_i}-X^1_{v_{i-1}})^2\Big)\, dy\, du\, dv.
\end{align*}

By Assumption (A2),  on $\widetilde O_m^\gamma$ and  for even $i$, $\E(X^1_{u_i}-X^1_{u_{i-1}})^2$ and $\E(X^1_{v_i}-X^1_{v_{i-1}})^2$ are greater than $\overline\alpha_2 (u_i-u_{i-1})^{2H}$ and $\overline\alpha_2(v_i-v_{i-1})^{2H}$, respectively, where $\overline\alpha_2$ is given in \eqref{eq-alpha}. Then,
 \begin{align*}
\limsup\limits_{n\to\infty}\sum_{\sigma\in \mathscr P_1} \widetilde{\I}^{n,\varepsilon, \sigma}_{m,\gamma,3}
&\leq \frac{2^m(m-1)!!}{(2\pi)^{md/2}} \left(\frac{1}{(2\pi)^d}\int_{\R^d}|\widehat{f}(z)|^2|z|^{-d}\, dz\right)^{m/2}\\
&\quad\times \left( \int_{[0,+\infty)^2}\exp\left(-\frac{1}{2}(1-c_{29}\beta_1(\gamma))(1-\frac{m}{\gamma})\overline\alpha_2(u^{2H}+v^{2H})\right)\, du\,dv \right)^{m/2}\\
&\quad\quad\times \left(\overline{b}(\gamma) \overline{\alpha}_2(\gamma)\right)^{-\frac{md}{4}} \left[ \frac{\Gamma(\frac{m}{2}+(\frac{\overline{\alpha}_2(\gamma)}{\overline{\alpha}_1(\gamma)})^{\frac{d}{4}})^{\frac{d}{4}})}{(\frac{m}{2})!\Gamma((\frac{\overline{\alpha}_2(\gamma)}{\overline{\alpha}_1(\gamma)})^{\frac{d}{4}})} \right]^2 \left[\frac{2t\pi^{\frac{d}{2}}\Gamma^2(\frac{d+4}{4})}{(1-\frac{m}{\gamma})^{\frac{d}{4}}\Gamma(\frac{d+2}{2})}\right]^{m/2}(m-1)!!,
\end{align*}
where $\overline\alpha_1$ and $\overline\alpha_2$ are given in \eqref{eq-alpha}, and $\overline{b}(\gamma)=\frac{1}{2}-m\beta_1(\gamma)$, the two integrals are from the integration with respect to $\Delta u_i=u_i-u_{i-1}$ and $\Delta v_i=v_i-v_{i-1}$ with even indices $i$ and the fact that $\int_0^\infty e^{-c |z|^2 s^{2H}}ds= |z|^{-d/2} \int_0^\infty e^{-c s^{2H}}ds$, and the terms in the third line follows from the methodology used in  {\bf Step 3} and {\bf Step 4} of the proof for Proposition \ref{prop}.

Recall that $\widetilde{\I}^{n,\varepsilon,\sigma}_{m,\gamma}=\widetilde{\I}^{n,\varepsilon,\sigma}_{m,\gamma,1}+\widetilde{\I}^{n,\varepsilon,\sigma}_{m,\gamma,2}$, inequality (\ref{ppp2}) and $\limsup\limits_{n\to\infty} |\widetilde{\I}^{n,\varepsilon,\sigma}_{m,\gamma,2}-\widetilde{\I}^{n,\varepsilon,\sigma}_{m,\gamma,3}|=0$ for $\sigma\in \mathscr{P}_1$. So $\limsup\limits_{n\to\infty}\sum\limits_{\sigma\in \mathscr{P}_1}\widetilde{\I}^{n,\varepsilon, \sigma}_{m,\gamma}$ is less than
\begin{align*}
&c_{31} \int_{|y|\leq \gamma\varepsilon}|\widehat{f}(y)|^2|y|^{-d}\, dy+\frac{2^m((m-1)!!)^2}{(2\pi)^{md/2}} \left(\frac{1}{(2\pi)^d}\int_{\R^d}|\widehat{f}(z)|^2|z|^{-d}\, dz\right)^{m/2}
&\qquad\qquad\qquad\qquad\qquad\qquad \times \left( \int_{[0,+\infty)^2}\exp\left(-\frac{1}{2}(1-c_{28}\beta_1(\gamma))(1-\frac{m}{\gamma})\alpha_2(u^{2H}+v^{2H})\right)\, du\,dv \right)^{m/2}\\
&\qquad\qquad\qquad\qquad\qquad\times \left( \int_{[0,+\infty)^2}\exp\left(-\frac{1}{2}(1-c_{29}\beta_1(\gamma))(1-\frac{m}{\gamma}) \overline\alpha_2 (u^{2H}+v^{2H})\right)\, du\,dv \right)^{m/2}\\
&\qquad\qquad\qquad\qquad\qquad\qquad\qquad \times \left(\overline{b}(\gamma) \overline{\alpha}_2(\gamma)\right)^{-\frac{md}{4}} \left[ \frac{\Gamma(\frac{m}{2}+(\frac{\overline{\alpha}_2(\gamma)}{\overline{\alpha}_1(\gamma)})^{\frac{d}{4}})}{(\frac{m}{2})!\Gamma((\frac{\overline{\alpha}_2(\gamma)}{\overline{\alpha}_1(\gamma)})^{\frac{d}{4}})} \right]^2 \left[\frac{2t\pi^{\frac{d}{2}}\Gamma^2(\frac{d+4}{4})}{(1-\frac{m}{\gamma})^{\frac{d}{4}}\Gamma(\frac{d+2}{2})}\right]^{m/2}.
\end{align*}
Note that by \textbf{Step 2},
\begin{align*}
\limsup\limits_{n\to\infty}\widetilde{\I}^{n}_{m,\gamma}\leq  \limsup\limits_{n\to\infty}\sum_{\sigma\in \mathscr{P}_0}\widetilde{\I}^{n,\varepsilon, \sigma}_{m,\gamma}+\limsup\limits_{n\to\infty}\sum_{\sigma\in \mathscr{P}_1}\widetilde{\I}^{n,\varepsilon, \sigma}_{m,\gamma}.
\end{align*}
Taking $\varepsilon\to 0$ first and then $\gamma\to+\infty$ on the right hand side of the above inequality, we obtain, by \textbf{Step 1},
 \begin{align*}
\limsup\limits_{n\to\infty}{\I}^{n}_{m}\leq\left[ \frac{\Gamma(\frac{m}{2}+(\frac{\alpha_2}{\alpha_1})^{\frac{d}{4}})}{(\frac{m}{2})!\Gamma((\frac{\alpha_2}{\alpha_1})^{\frac{d}{4}})} \right]^2 (D_{f,d}\, t)^{m/2}((m-1)!!)^2.
\end{align*}

On the other hand,
\begin{align} \label{ppp4}
\liminf\limits_{n\to\infty}\I^{n}_{m}\geq  \liminf\limits_{n\to\infty}\sum_{\sigma\in \mathscr{P}_0}\widetilde{\I}^{n,\varepsilon, \sigma}_{m,\gamma}+\liminf\limits_{n\to\infty}\sum_{\sigma\in \mathscr{P}_1}\widetilde{\I}^{n,\varepsilon, \sigma}_{m,\gamma,1}+\liminf\limits_{n\to\infty}\sum_{\sigma\in \mathscr{P}_1}\widetilde{\I}^{n,\varepsilon, \sigma}_{m,\gamma,3}.
\end{align}
Using similar arguments as above,  $\liminf\limits_{n\to\infty}\sum\limits_{\sigma\in \mathscr{P}_1}\widetilde{\I}^{n,\varepsilon, \sigma}_{m,\gamma,3}$
is greater than
\begin{align*}
&\liminf\limits_{n\to\infty}\frac{m!}{((2\pi)^d\sqrt{n})^m} \sum_{\sigma\in \mathscr{P}_1} \int_{\widetilde{O}^{\gamma}_{m} \times \widetilde{O}^{\gamma}_{m}} \int_{T^{\sigma}_{\varepsilon,2}} \prod^{m/2}_{j=1}|\widehat{f}(y_{2j})|^2 \\
&\qquad\times \exp\Big(-\frac{\overline{\beta}(\gamma,n)}{2}\sum\limits^m_{i=1} |y_i|^2 \E(X_{u_i}-X_{u_{i-1}})^2\Big) \nonumber \\
&\qquad\qquad \times \exp\Big(-\frac{\overline{\beta}(\gamma,n)}{2}\sum\limits^m_{i=1} |\sum^m_{j=i} ( y_{\sigma(j)}-y_{\sigma(j)+1})|^2 \E(X_{v_i}-X_{v_{i-1}})^2\Big)\, dy\, du\, dv\\
&\geq\liminf\limits_{n\to\infty}\frac{m!}{((2\pi)^d\sqrt{n})^m} \sum_{\sigma\in \mathscr{P}_1} \int_{\widetilde{O}^{\gamma}_{m} \times \widetilde{O}^{\gamma}_{m}} \int_{T^{\sigma}_{\varepsilon,2}} \prod^{m/2}_{j=1}\left(|\widehat{f}(y_{2j})|^2\right)\\
&\quad \times \exp\Big(-\frac{\overline{\beta}(\gamma,n)}{2}\sum\limits^m_{i=1} |y_i|^2 \E(X_{u_i}-X_{u_{i-1}})^2\Big) \nonumber \\
&\quad\quad \times \exp\Big(-\frac{\overline{\beta}(\gamma,n)}{2}(1+\frac{m}{\gamma}) \sum\limits^m_{i=1, i: even} |y_{\overline{\sigma}(i)}|^2\E(X_{v_i}-X_{v_{i-1}})^2\Big)\\
&\quad\quad\quad \times \exp\Big(-\frac{\overline{\beta}(\gamma,n)}{2} \sum\limits^m_{i=1, i: odd} |\sum^m_{j=i} ( y_{\sigma(j)}-y_{\sigma(j)+1})|^2\E(X_{v_i}-X_{v_{i-1}})^2\Big)\, dy\, du\, dv.
\end{align*}
Hence,
\begin{align*}
\liminf\limits_{n\to\infty}\sum\limits_{\sigma\in \mathscr{P}_1}\widetilde{\I}^{n,\varepsilon, \sigma}_{m,\gamma,3}
&\geq \frac{2^m((m-1)!!)^2}{(2\pi)^{md/2}}  \left(\frac{1}{(2\pi)^{d}}\int_{\R^d}|\widehat{f}(z)|^2|z|^{-d}1_{\{|z|>\gamma \varepsilon\}}\, dz\right)^{m/2}\\
&\qquad\times \left( \int_{[0,+\infty)^2} \exp\Big(-\frac{1}{2}(1+c_{30}\beta_1(\gamma))(1+\frac{m}{\gamma}) \underline\alpha_2 (u^{2H}+v^{2H})\Big)\, du\,dv \right)^{m/2}\\
&\qquad\qquad\times \left(\underline{b}(\gamma) \underline{\alpha}_2(\gamma)\right)^{-\frac{md}{4}} \left[ \frac{\Gamma(\frac{m}{2}+(\frac{\underline{\alpha}_2(\gamma)}{\underline{\alpha}_1(\gamma)})^{\frac{d}{4}})}{(\frac{m}{2})!\Gamma((\frac{\underline{\alpha}_2(\gamma)}{\underline{\alpha}_1(\gamma)})^{\frac{d}{4}})} \right]^2 \left[\frac{2t\pi^{\frac{d}{2}}\Gamma^2(\frac{d+4}{4})}{(1+\frac{m}{\gamma})^{\frac{d}{4}}\Gamma(\frac{d+2}{2})}\right]^{m/2},
\end{align*}
where $\underline{b}(\gamma)=\frac{1}{2}+m\beta_1(\gamma)$, $\underline{\alpha}_1(\gamma)$ and $\underline{\alpha}_2(\gamma)$ are given in \eqref{eq-alpha}.

Recall inequalities (\ref{ppp1}) and (\ref{ppp2}). Taking $\varepsilon\to 0$ first and then $\gamma\to+\infty$ on the right hand side of (\ref{ppp4}) gives
 \begin{align*}
\liminf\limits_{n\to\infty}\I^{n}_{m}\geq \left[ \frac{\Gamma(\frac{m}{2}+(\frac{\alpha_2}{\alpha_1})^{\frac{d}{4}})}{(\frac{m}{2})!\Gamma((\frac{\alpha_2}{\alpha_1})^{\frac{d}{4}})} \right]^2 (D_{f,d}\, t)^{m/2}((m-1)!!)^2.
\end{align*}
This completes the proof of convergence of even moments. 
\end{proof}

\begin{remark}   When $m$ is an even integer, although the asymptotic $m$-th moment $\I_m^n$ given by \eqref{Inm} involves all permutations of $\{1,2,\dots,m\}$, only permutations in $\mathscr P_1$ given by \eqref{ctr} contribute to the limit when $n\to\infty$. Moreover, the arguments in \textbf{Step 3}, \textbf{Step 4} and \textbf{Step 5} show that the paring defined in \eqref{ctr} indicates a clear relationship between the first-order limit law and the corresponding second-order limit law.
\end{remark}

\bigskip

\noindent
{\bf Proof of Theorem \ref{thm2}}:  This follows from Lemmas \ref{lem1}-\ref{chain} and Proposition \ref{moments}. \hfill  {\scriptsize $\blacksquare$} 

\bigskip

\section{Appendix}
Here we give some lemmas  which are used to estimate moments when $n$ goes to infinity.  Recall that $Hd=2$. The generic constant $c$ is independent of $n$ and varies at different places.

\begin{lemma}\label{lemma0}
Let $a$ and $m$ be  positive constants. Then we have
\begin{align*}
&\int_{\R^d} e^{-a|x|^2}dx=c\, a^{-d/2}, \text{  and hence } \int_{|x|\le m} e^{-a|x|^2}dx\le c\, (1\wedge a^{-d/2});\\
&\int_0^\infty e^{-a^2 u^{2H}} du = c\, a^{-d/2}, \text{ and hence } \int_0^m e^{-a^2 u^{2H}} du \le  c\, (1\wedge a^{-d/2}).
\end{align*}
\end{lemma}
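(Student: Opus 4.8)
The plan is to prove both identities by elementary scaling arguments, using the standing assumption $Hd=2$ (equivalently $1/H=d/2$) to match the exponents, and then to deduce the two truncated bounds by comparing each truncated integral both with its untruncated version and with a trivial volume/length bound.

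First I would treat the Gaussian integral. Substituting $y=\sqrt{a}\,x$ gives $dx=a^{-d/2}\,dy$ and hence
\begin{align*}
\int_{\R^d} e^{-a|x|^2}\,dx=a^{-d/2}\int_{\R^d}e^{-|y|^2}\,dy=c\,a^{-d/2},
\end{align*}
with $c=\pi^{d/2}$ finite. For the truncated version I would bound $\int_{|x|\le m}e^{-a|x|^2}\,dx$ in two ways: by the full integral $c\,a^{-d/2}$, and by the volume of the ball $\{|x|\le m\}$, which is a finite constant depending only on $m$. When $a\ge 1$ the first bound gives $c\,a^{-d/2}=c\,(1\wedge a^{-d/2})$, while for $a<1$ the second bound is a constant $=c\,(1\wedge a^{-d/2})$; absorbing the $m$-dependent constant into $c$ yields $\int_{|x|\le m}e^{-a|x|^2}\,dx\le c\,(1\wedge a^{-d/2})$.

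For the second pair I would substitute $w=a^{1/H}u$, so that $a^2u^{2H}=w^{2H}$ and $du=a^{-1/H}\,dw$. Using $1/H=d/2$ this gives
\begin{align*}
\int_0^\infty e^{-a^2u^{2H}}\,du=a^{-d/2}\int_0^\infty e^{-w^{2H}}\,dw=c\,a^{-d/2},
\end{align*}
where $c=\int_0^\infty e^{-w^{2H}}\,dw=\tfrac{1}{2H}\Gamma(\tfrac{1}{2H})<\infty$ (by the substitution $t=w^{2H}$, finite since $2H>0$). The truncated bound then follows exactly as before: $\int_0^m e^{-a^2u^{2H}}\,du$ is at most the full integral $c\,a^{-d/2}$ and at most $\int_0^m 1\,du=m$, so it is $\le c\,(1\wedge a^{-d/2})$.

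There is no genuine obstacle here; the statement is a routine computation. The only point requiring minor care is the handling of the minimum $1\wedge a^{-d/2}$ in the truncated estimates, which is resolved simply by splitting into the regimes $a\ge 1$ and $a<1$ and using whichever of the two bounds is sharper.
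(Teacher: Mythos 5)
Your proof is correct and is exactly the argument the paper intends: its own proof is a one-line remark that the results follow from a change of variables and the fact that $Hd=2$, which is precisely the scaling substitution you carry out, and your handling of the truncated bounds via the two regimes $a\ge 1$ and $a<1$ is the standard way to realize the $1\wedge a^{-d/2}$ estimate.
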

\begin{proof}
The results can be proven using change of variables and the fact that $Hd=2$. 
\end{proof}

\begin{lemma}\label{lemma0'}
For any $a>0$, $b>0$ and $\gamma>1$,
$$\int_{a/\gamma}^{a\gamma}\int_{b/\gamma}^{b\gamma} (u^{2H}+v^{2H})^{-d/2} dudv \le \frac{\pi}{H^2} \ln \gamma.$$
\end{lemma}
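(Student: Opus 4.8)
The plan is to exploit the scaling structure. Since $Hd=2$ we have $d/2=1/H$, so the integrand $(u^{2H}+v^{2H})^{-1/H}$ is homogeneous of degree $-2$, while the domain $(a/\gamma,a\gamma)\times(b/\gamma,b\gamma)$ is scale invariant; this coupling is exactly what should produce a single power of $\ln\gamma$ (a naive AM--GM bound $u^{2H}+v^{2H}\ge 2u^Hv^H$ only gives $(\ln\gamma)^2$ and is too weak, so I avoid it). First I would freeze $u$ and substitute $v=ut$ in the inner integral; homogeneity extracts a factor $u^{-1}$ and leaves a one-dimensional integral over a bounded $t$-range, which I enlarge to the whole half-line: $\int_{b/\gamma}^{b\gamma}(u^{2H}+v^{2H})^{-1/H}\,dv=u^{-1}\int_{b/(u\gamma)}^{b\gamma/u}(1+t^{2H})^{-1/H}\,dt\le u^{-1}C_H$, where $C_H:=\int_0^\infty(1+t^{2H})^{-1/H}\,dt<\infty$ (the integrand decays like $t^{-2}$ at infinity). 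Integrating the remaining factor over the scale-invariant range gives $\int_{a/\gamma}^{a\gamma}u^{-1}\,du=2\ln\gamma$, so the double integral is at most $2C_H\ln\gamma$.

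It then remains to show $C_H\le \pi/(2H^2)$, which reduces the whole problem to a one-dimensional angular estimate. To put $C_H$ into a convenient form I would substitute $t^H=\tan\phi$, which turns $1+t^{2H}$ into $\sec^2\phi$ and, after collecting the powers of $\sin\phi$ and $\cos\phi$, yields the clean expression $C_H=\frac{1}{H}\int_0^{\pi/2}(\sin\phi\cos\phi)^{1/H-1}\,d\phi$ (equivalently $\frac{1}{2H}B(\frac{1}{2H},\frac{1}{2H})$ by the standard Beta-integral identity).

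The main obstacle is obtaining the exact clean constant $\pi/H^2$ rather than a bound phrased through $B(\frac{1}{2H},\frac{1}{2H})$, and the crux is the elementary estimate of this angular integral. Since $H\in(0,1)$ forces $1/H-1\ge 0$ and $\sin\phi\cos\phi=\frac12\sin2\phi\le\frac12$, I would bound $(\sin\phi\cos\phi)^{1/H-1}\le 2^{1-1/H}$, so the angular integral is at most $2^{1-1/H}\cdot\frac{\pi}{2}$. The remaining point is the scalar inequality $2^{1-1/H}\le 1/H$ on $(0,1)$: the function $H\mapsto H\,2^{1-1/H}$ has logarithmic derivative $\frac1H+\frac{\ln2}{H^2}>0$ and value $1$ at $H=1$, hence it is increasing and bounded by $1$ on $(0,1)$. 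This gives $C_H\le\frac{1}{H}\cdot 2^{1-1/H}\cdot\frac{\pi}{2}\le\frac{\pi}{2H^2}$, and combining with the first paragraph yields $2C_H\ln\gamma\le \pi H^{-2}\ln\gamma$, which is the claim (and is asymptotically sharp as $H\to1$).
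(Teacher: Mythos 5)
Your proof is correct, and it reaches the stated bound by a genuinely different (though closely related) route. The paper performs a two-dimensional polar-type change of variables $u^H=r\cos\theta$, $v^H=r\sin\theta$, so that the radial integral $\int r^{-1}\,dr$ over a range of ratio $\gamma^{2H}$ produces $2H\ln\gamma$ and the angular factor $\int_0^{\pi/2}(\sin\theta\cos\theta)^{1/H-1}\,d\theta$ is simply bounded by $\pi/2$ using $1/H-1\ge 0$; this yields the sharper estimate $\frac{\pi}{H}\ln\gamma$, which implies the stated one since $H<1$. You instead freeze $u$, use the one-dimensional scaling $v=ut$ together with the degree $-2$ homogeneity to factor the integral as $\bigl(\int_{a/\gamma}^{a\gamma}u^{-1}du\bigr)\cdot C_H=2C_H\ln\gamma$, and then evaluate the constant $C_H$ separately by the substitution $t^H=\tan\phi$ — which lands you on exactly the same angular integral the paper encounters. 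The only inefficiency is at the end: bounding $(\sin\phi\cos\phi)^{1/H-1}\le 2^{1-1/H}$ and then invoking the scalar inequality $2^{1-1/H}\le 1/H$ costs you a factor; the cruder bound $(\sin\phi\cos\phi)^{1/H-1}\le 1$ gives $C_H\le\frac{\pi}{2H}$ directly and hence the paper's stronger conclusion $\frac{\pi}{H}\ln\gamma$ with one less step. What your decomposition buys is a cleaner separation of the two sources of the estimate (the logarithm from scale invariance of the domain, the constant from a fixed one-dimensional integral identified as $\frac{1}{2H}B(\frac{1}{2H},\frac{1}{2H})$), at the price of a slightly longer elementary computation.
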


\begin{proof}
Let $u^H=r\cos \theta$ and $v^H=r\sin\theta$. Then
\begin{align*}
\int_{a/\gamma}^{a\gamma}\int_{b/\gamma}^{b\gamma} (u^{2H}+v^{2H})^{-d/2} dudv
&\leq\int^{\sqrt{a^{2H}+b^{2H}}\, \gamma^H}_{\sqrt{a^{2H}+b^{2H}}/\gamma^H} \int^{\frac{\pi}{2}}_0  r^{-d}  \frac{1}{H^2} r^{\frac{2}{H}-1} (\cos\theta)^{\frac{1}{H}-1} (\sin\theta)^{\frac{1}{H}-1} d\theta dr\\
&\leq\frac{\pi}{2H^2}\int^{\sqrt{a^{2H}+b^{2H}}\, \gamma^H}_{\sqrt{a^{2H}+b^{2H}}/\gamma^H}  \frac{1}{r}\, dr\\
&=\frac{\pi}{H} \ln \gamma,
\end{align*}
where in the second inequality we use the fact that $\frac{2}{H}=d$ and $H<1$. 
\end{proof}

\begin{lemma}\label{lemma1}
\begin{align*}
&\int^{e^{nt}}_{e^{-2mnt}}\int^{e^{nt}}_{e^{-2mnt}} (u^{2H}+v^{2H})^{-\frac{d}{2}} du\, dv <c\, n,\\
&\int^{e^{nt}}_0\int^{e^{nt}}_0 1\wedge (u^{2H}+v^{2H})^{-\frac{d}{2}} du\, dv <c\, n.
\end{align*}
\end{lemma}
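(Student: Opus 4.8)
The plan is to reduce both double integrals to a one-dimensional logarithmic integral via the same polar-type substitution used in the proof of Lemma \ref{lemma0'}. First I would set $u^H = r\cos\theta$ and $v^H = r\sin\theta$, under which $u^{2H}+v^{2H}=r^2$ and
\[
du\, dv = \frac{1}{H^2}\, r^{\frac2H - 1}(\cos\theta)^{\frac1H-1}(\sin\theta)^{\frac1H-1}\, d\theta\, dr.
\]
The decisive simplification comes from $Hd=2$, equivalently $\frac2H = d$: then $(u^{2H}+v^{2H})^{-d/2}=r^{-d}$ combines with the Jacobian factor $r^{2/H-1}=r^{d-1}$ to leave exactly $r^{-1}$. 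The angular integral $\int_0^{\pi/2}(\cos\theta)^{\frac d2 -1}(\sin\theta)^{\frac d2 -1}\, d\theta = \frac12 B(\frac d4,\frac d4)$ is a finite constant (the exponent $\frac d2-1>-1$), so in both estimates only the radial integral $\int \frac{dr}{r}$ over an appropriate range remains, and the whole point is that this range has \emph{exponential} length in $n$, producing a logarithm that is linear in $n$.

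For the first integral I would track the range of $r$ as $(u,v)$ ranges over $[e^{-2mnt},e^{nt}]^2$. Since $r^2 = u^{2H}+v^{2H}$, the lower bound $u\ge e^{-2mnt}$ forces $r\ge e^{-2mntH}$, while $u,v\le e^{nt}$ forces $r\le \sqrt2\, e^{ntH}$. As the integrand is nonnegative, I may enlarge the image of the box to the full rectangle $\{\,e^{-2mntH}\le r\le \sqrt2\, e^{ntH},\ 0\le\theta\le\pi/2\,\}$. Then $\int \frac{dr}{r}$ over this range equals $ntH(1+2m)+\frac12\ln 2$, which is bounded by $c\,n$ with $c$ depending only on $t,m,H$; multiplying by $H^{-2}$ and the finite angular constant yields the first bound.

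For the second integral the only new ingredient is the truncation $1\wedge(\cdot)$, which I would handle by splitting $[0,e^{nt}]^2$ according to whether $u^{2H}+v^{2H}\le 1$ or $>1$. On $\{u^{2H}+v^{2H}\le 1\}$, equivalently $r\le 1$, the minimum equals $1$, and this set is contained in $[0,1]^2$, so its contribution is at most its area, namely $\le 1$. On $\{u^{2H}+v^{2H}>1\}$, equivalently $r>1$, the minimum equals $r^{-d}$ and $r$ ranges over $(1,\sqrt2\,e^{ntH}]$, so the same polar computation gives $\int_1^{\sqrt2\,e^{ntH}}\frac{dr}{r}=ntH+\frac12\ln2\le c\,n$. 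Adding the two contributions gives $1+c\,n\le c'\,n$ for $n\ge1$.

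There is no serious obstacle here: the essential content is the scale invariance forced by $Hd=2$, which turns the two-dimensional integral of a homogeneous degree $-2$ kernel over an exponentially large box into a logarithm of the box's aspect ratio, hence a quantity linear in $n$. The only point requiring a little care is the passage from the rectangular $(u,v)$-domain to a radial range for $r$; I would use the crude but correct inclusions above rather than attempt to describe the exact image, and keep all constants independent of $n$ throughout.
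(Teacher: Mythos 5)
Your proposal is correct and follows essentially the same route as the paper: the polar-type substitution $u^H=r\cos\theta$, $v^H=r\sin\theta$, the cancellation $r^{-d}\cdot r^{2/H-1}=r^{-1}$ forced by $Hd=2$, a bounded angular integral, and a radial $\int dr/r$ over an exponentially long range yielding $c\,n$. The only cosmetic differences are that you evaluate the angular integral as a Beta function where the paper simply bounds it by $\pi/2$, and you spell out the $r\le 1$ versus $r>1$ splitting for the second integral, which the paper leaves implicit.
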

\begin{proof}
It suffices to show the first inequality. Let $u^H=r\cos \theta$ and $v^H=r\sin\theta$. Then 
\begin{align*}
\int^{e^{nt}}_{e^{-2mnt}}\int^{e^{nt}}_{e^{-2mnt}} (u^{2H}+v^{2H})^{-\frac{d}{2}} du\, dv
&\leq \int^{2e^{nHt}}_{e^{-2mnHt}} \int^{\frac{\pi}{2}}_0  r^{-d}  \frac{1}{H^2} r^{\frac{2}{H}-1} (\cos\theta)^{\frac{1}{H}-1} (\sin\theta)^{\frac{1}{H}-1} d\theta dr\\
&\leq \frac{\pi}{2H^2}\int^{2e^{nHt}}_{e^{-2mnHt}}  \frac{1}{r} dr\\
&\leq c\, n,
\end{align*}
where in the second inequality we use the fact that $\frac{2}{H}=d$ and $H<1$. 
\end{proof}

\begin{lemma}\label{lemma2}
For $\alpha>0,$
$$\int_{\mathbb R^d}\int_{[n^{-m}, e^{nt}]^2}  |x|^\alpha e^{-|x|^2(s^{2H}+r^{2H})} dsdrdx\le  c_\alpha\, n^{mH\alpha}.$$
\end{lemma}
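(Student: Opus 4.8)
The plan is to integrate out the space variable $x$ first and then reduce the remaining two-dimensional time integral to a radial one via the same polar substitution used in Lemmas \ref{lemma0'} and \ref{lemma1}.

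First I would compute the inner Gaussian integral. For fixed $s,r$, writing $a=s^{2H}+r^{2H}$ and rescaling $x\mapsto a^{-1/2}x$ gives
\[
\int_{\mathbb{R}^d} |x|^\alpha e^{-a|x|^2}\, dx = a^{-\frac{d+\alpha}{2}}\int_{\mathbb{R}^d}|x|^\alpha e^{-|x|^2}\, dx = c_\alpha\, (s^{2H}+r^{2H})^{-\frac{d+\alpha}{2}},
\]
where $c_\alpha<\infty$ since $|x|^\alpha e^{-|x|^2}$ is integrable on $\mathbb{R}^d$. Hence the triple integral equals $c_\alpha \int_{[n^{-m},e^{nt}]^2}(s^{2H}+r^{2H})^{-\frac{d+\alpha}{2}}\,ds\,dr$, and it remains to bound this planar integral by a constant multiple of $n^{mH\alpha}$.

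Next I would apply the change of variables $s^H=\rho\cos\theta$, $r^H=\rho\sin\theta$, exactly as in the proof of Lemma \ref{lemma1}, under which $s^{2H}+r^{2H}=\rho^2$ and $ds\,dr=\frac{1}{H^2}\rho^{\frac{2}{H}-1}(\cos\theta)^{\frac1H-1}(\sin\theta)^{\frac1H-1}\,d\theta\,d\rho$. Using the critical relation $\frac{2}{H}=d$, the radial powers combine as $\rho^{-(d+\alpha)}\rho^{d-1}=\rho^{-\alpha-1}$, so the integrand factorizes; the angular factor $\int_0^{\pi/2}(\cos\theta)^{\frac1H-1}(\sin\theta)^{\frac1H-1}\,d\theta$ is a finite (Beta-type) constant because $\frac1H>1$. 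This reduces the estimate to controlling $\int \rho^{-\alpha-1}\,d\rho$ over the radial range.

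Finally I would read off the radial bounds. Since $s,r\ge n^{-m}$ on the domain, we have $s^{2H}+r^{2H}\ge n^{-2mH}$, hence $\rho\ge n^{-mH}$; enlarging the region to the full sector $\{\rho\ge n^{-mH}\}$ only increases the integral, and
\[
\int_{n^{-mH}}^{\infty}\rho^{-\alpha-1}\,d\rho = \frac{1}{\alpha}\,n^{mH\alpha}.
\]
Combining the three steps yields the claimed bound $c_\alpha\, n^{mH\alpha}$. There is no substantive obstacle here: the only points requiring care are the exponent bookkeeping in the polar Jacobian and the observation that the lower time cutoff $n^{-m}$ is precisely what generates the polynomial factor $n^{mH\alpha}$, while the upper cutoff $e^{nt}$ is irrelevant since $\rho^{-\alpha-1}$ is integrable at infinity.
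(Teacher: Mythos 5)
Your proposal is correct and follows essentially the same route as the paper: integrate out $x$ to reduce to $c_\alpha\int_{[n^{-m},e^{nt}]^2}(s^{2H}+r^{2H})^{-\frac{d+\alpha}{2}}\,ds\,dr$, then apply the polar substitution $s^H=\rho\cos\theta$, $r^H=\rho\sin\theta$ from Lemma \ref{lemma1}, with the lower cutoff $n^{-m}$ producing the factor $n^{mH\alpha}$. You have merely spelled out the radial-integral details that the paper compresses into ``similar to that of Lemma \ref{lemma1}.''
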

\begin{proof}
Integrating with respect to $x$ gives 
\begin{align*}
\int_{\mathbb R^d}\int_{[n^{-m}, e^{nt}]^2}  |x|^\alpha e^{-|x|^2(s^{2H}+r^{2H})} dsdrdx
=& c\int_{[n^{-m}, e^{nt}]^2}  (s^{2H}+r^{2H})^{-\frac{d+\alpha} 2} dsdr\\
\le & c_\alpha\, n^{mH\alpha},
\end{align*}
where the proof of the last inequality is similar to that of Lemma \ref{lemma1}.
\end{proof}

\begin{lemma}\label{lemma3} If $f$ is a real-valued bounded measurable function on $\R^d$ with $\int_{\R^d} f(x)\, dx=0$ and $\int_{\R^d}|f(x)||x|^{\beta}\, dx<\infty$ for some $\beta>0$, then
$$\int_{\mathbb R^d}\int_{[n^{-m}, e^{nt}]^2}  |\widehat f(x)|^2 e^{-|x|^2(s^{2H}+r^{2H})} dsdrdx<\infty.$$
\end{lemma}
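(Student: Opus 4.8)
The plan is to carry out the $s$- and $r$-integrations first, so as to reduce the assertion to the finiteness of a single weighted frequency integral, and then to dispatch that integral by treating low and high frequencies separately.

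First I would fix $x$ and use the product structure of the exponential to write
\[
\int_{[n^{-m},e^{nt}]^2} e^{-|x|^2(s^{2H}+r^{2H})}\,ds\,dr=\left(\int_{n^{-m}}^{e^{nt}} e^{-|x|^2 s^{2H}}\,ds\right)^2.
\]
Dropping the restriction on the range of $s$ and applying the identity $\int_0^\infty e^{-a^2 s^{2H}}\,ds=c\,a^{-d/2}$ of Lemma \ref{lemma0} with $a=|x|$ gives $\int_{n^{-m}}^{e^{nt}} e^{-|x|^2 s^{2H}}\,ds\le c\,|x|^{-d/2}$, so that, recalling $Hd=2$,
\[
\int_{[n^{-m},e^{nt}]^2} e^{-|x|^2(s^{2H}+r^{2H})}\,ds\,dr\le c\,|x|^{-d}
\]
with a constant $c$ \emph{independent of $n$}. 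By Fubini's theorem it then suffices to prove
\[
\int_{\R^d}|\widehat f(x)|^2\,|x|^{-d}\,dx<\infty,
\]
which is precisely the integral whose finiteness is recorded in Remark \ref{remark1}; for completeness I would reproduce the short argument.

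To prove this I would split $\R^d=\{|x|<1\}\cup\{|x|\ge1\}$. On $\{|x|\ge1\}$ we have $|x|^{-d}\le1$, and since $f$ is bounded and integrable it lies in $L^2(\R^d)$ (indeed $\int|f|^2\le\|f\|_\infty\|f\|_{L^1}<\infty$), so Plancherel's theorem yields $\int_{|x|\ge1}|\widehat f(x)|^2|x|^{-d}\,dx\le\int_{\R^d}|\widehat f(x)|^2\,dx<\infty$. On $\{|x|<1\}$ I would invoke the cancellation hypothesis $\int_{\R^d}f(x)\,dx=0$, which gives $\widehat f(0)=0$ together with the Hölder-type estimate $|\widehat f(x)|=|\widehat f(x)-\widehat f(0)|\le c_\alpha|x|^\alpha$ for every $\alpha\in(0,\beta]$ (see Remark \ref{remark1}); choosing such an $\alpha$ one gets $|\widehat f(x)|^2|x|^{-d}\le c_\alpha^2|x|^{2\alpha-d}$, and in polar coordinates $\int_{|x|<1}|x|^{2\alpha-d}\,dx=c\int_0^1 r^{2\alpha-1}\,dr<\infty$ since $\alpha>0$. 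Adding the two pieces finishes the argument.

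The point I would emphasize is that the weight $|x|^{-d}$ is exactly critical: it is non-integrable against a constant both at the origin and at infinity, so neither region can be dispatched by the crude pointwise bound $|\widehat f|\le\|f\|_{L^1}$. The high-frequency part genuinely requires the $L^2$-control of $\widehat f$ supplied by Plancherel, while the low-frequency part requires the vanishing of $\widehat f$ at the origin coming from $\int f=0$; the latter is also what keeps the bound uniform in $n$, which is what is actually needed when the estimate is applied to control $A_{m,m}$ in Lemma \ref{chain}. There is no analytic difficulty beyond recognizing that both ingredients are indispensable.
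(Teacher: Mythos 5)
Your proposal is correct and follows essentially the same route as the paper: the paper performs the change of variables $u=|x|^{1/H}s$, $v=|x|^{1/H}r$ directly (which is exactly the content of your appeal to Lemma \ref{lemma0}) to reduce the claim to the finiteness of $\int_{\R^d}|\widehat f(x)|^2|x|^{-d}\,dx$, and then cites Remark \ref{remark1} for that finiteness. The only difference is that you reproduce the low-/high-frequency splitting behind Remark \ref{remark1} rather than citing it, which is a harmless elaboration.
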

\begin{proof} Using the change of variables
$u=|x|^{1/H}s$ and $v=|x|^{1/H}t$,
\begin{align*}
&\int_{\mathbb R^d}\int_{[n^{-m}, e^{nt}]^2}  |\widehat f(x)|^2 e^{-|x|^2(s^{2H}+r^{2H})} dsdrdx\\
\le& \int_{\R^d}|\widehat{f}(x)|^{2}|x|^{-\frac{2}{H}} dx \int^{\infty}_{0} e^{-u^{2H}} du  \int^{\infty}_{0} e^{-v^{2H}} dv\\
\le & c\, \int_{\R^d}|\widehat{f}(x)|^{2}|x|^{-d} dx,
\end{align*}
where the last integral is finite by Remark \ref{remark1}.
\end{proof}

\begin{lemma}\label{lemma4}
For any $A>0$, $$\sum_{\sigma\in \mathscr P_m} A^{m-|\sigma|}=\prod_{i=1}^m \big(A+(i-1)\big),  $$
where $|\sigma|$ is given in \eqref{|sigma|}.
\end{lemma}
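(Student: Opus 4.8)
The plan is to reduce the identity to a classical generating-function formula for a permutation statistic of Mahonian/Stirling type. The first task is to read off from the definition \eqref{|sigma|} what $|\sigma|$ actually counts. On the integration domain relevant to \eqref{eq3-1} the increments $\Delta r_i=u_i$ are scale-separated according to $\sigma$ (each much smaller than the next in $\sigma$-order), so $r_{k-1}=\sum_{j<k}\Delta r_j$ is dominated by $\max_{j<k}\Delta r_j$. Hence the defining condition ``$u_k\le r_{k-1}/\gamma$'' holds precisely when $\Delta r_k$ is \emph{not} the largest among $\Delta r_1,\dots,\Delta r_k$, i.e. when $k$ fails to be a left-to-right maximum of the sequence $(\sigma^{-1}(1),\dots,\sigma^{-1}(m))$. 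Therefore $|\sigma|$ counts the non-records, and $m-|\sigma|$ equals the number of records (left-to-right maxima) of $\sigma^{-1}$; since records are equidistributed under inversion, $m-|\sigma|$ is distributed exactly as the record count on $\mathscr P_m$. Writing $A=(\overline\alpha_2(\gamma)/\overline\alpha_1(\gamma))^{d/4}$ for the base appearing in the application, the claim becomes $\sum_{\sigma\in\mathscr P_m}A^{\,\mathrm{rec}(\sigma)}=\prod_{i=1}^m(A+i-1)$, where $\mathrm{rec}(\sigma)$ is the number of records.

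Next I would invoke the classical fact (Foata's fundamental bijection) that the number of records is equidistributed with the number of cycles, so it suffices to show $\sum_{\sigma\in\mathscr P_m}A^{\,\mathrm{cyc}(\sigma)}=\prod_{i=1}^m(A+i-1)$, where $\mathrm{cyc}(\sigma)$ is the number of cycles. I would prove this by induction on $m$ via the standard insertion recursion: each $\sigma\in\mathscr P_m$ arises uniquely from a $\tau\in\mathscr P_{m-1}$ by inserting the symbol $m$ either as a new fixed point (one way, raising the cycle count by one) or immediately after one of the symbols $1,\dots,m-1$ in the cycle notation of $\tau$ ($m-1$ ways, each preserving the cycle count). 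This gives
\begin{align*}
\sum_{\sigma\in\mathscr P_m}A^{\,\mathrm{cyc}(\sigma)}=(A+m-1)\sum_{\tau\in\mathscr P_{m-1}}A^{\,\mathrm{cyc}(\tau)},
\end{align*}
and, since the case $m=1$ gives the value $A$, iterating yields the rising factorial $\prod_{i=1}^m(A+i-1)$. Equivalently, one can bypass the bijection and verify directly that the number of $\sigma\in\mathscr P_m$ with $m-|\sigma|=k$ obeys the Stirling recurrence $c_{m,k}=c_{m-1,k-1}+(m-1)\,c_{m-1,k}$ of the unsigned Stirling numbers of the first kind, whose generating polynomial $\sum_k c_{m,k}A^k$ is the same product.

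The only genuinely delicate point is the first step: matching the analytic definition of $|\sigma|$ in \eqref{|sigma|} to a clean combinatorial statistic. Once $m-|\sigma|$ is identified as a record (equivalently cycle) count, the remainder is the elementary rising-factorial identity. I would take particular care to confirm that the statistic is the record count and \emph{not} the number of ascents, since the latter would produce Eulerian numbers and a different (Eulerian) polynomial rather than $\prod_{i=1}^m(A+i-1)$; and I would check the degenerate case $m=1$, where the condition in \eqref{|sigma|} is vacuous, so $|\sigma|=0$ and the single permutation contributes $A^{1}$, matching both sides.
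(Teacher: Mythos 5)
Your proposal is correct, and it follows essentially the same route as the paper: the paper's entire proof of this lemma is the single sentence ``The result can be proven by the method of induction,'' which is precisely the insertion/Stirling recurrence $c_{m,k}=c_{m-1,k-1}+(m-1)c_{m-1,k}$ you carry out. Your identification of $m-|\sigma|$ with the left-to-right-maximum (record) count of $\sigma^{-1}$ on the scale-separated domain is the key step the paper leaves entirely implicit, and it is the right one (note $k=1$ never satisfies $u_1\le r_0/\gamma=0$, consistent with position $1$ always being a record); the detour through Foata's bijection and cycle counts is harmless but unnecessary, since the recurrence for the record statistic can be verified directly.
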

\begin{proof}
The result can be proven by the method of induction. 
\end{proof}

\begin{lemma}\label{lemma7}
Let $m$ be an even integer and $\sigma\in\mathscr P_1$, where $\mathscr P_1$ is given in \eqref{ctr}. Recall that \[
A^{\sigma}_i=\big\{\sigma(i), \cdots, \sigma(m)\big\}\Delta \big\{\sigma(i)+1, \cdots, \sigma(m)+1\big\}
\]
and $$T_{\varepsilon, 2, \gamma}^\sigma\subset \Big\{|y_k|>\gamma \varepsilon, \text{ for  even } k;\; |y_k|<\varepsilon, \frac{|y_k|}{|y_l|}\notin (\frac1\gamma, \gamma) \text{ for  odd } k \text{ and } l \Big\}.$$
Then  on $T_{\varepsilon, 2, \gamma}^\sigma$, there exists  $\widetilde{\sigma}\in \mathscr{P}$,  such that $\widetilde{\sigma}(j)$  and $j$ have the same parity for $j=1,2,\dots, m$, and
\begin{align}\label{eq5.1}
 \sup_{j\in A^{\sigma}_i} |y_j| \ge |y_{\widetilde \sigma(i)}|. 
\end{align}
\end{lemma}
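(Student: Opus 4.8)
The plan is first to expose the parity structure of $A_i^\sigma$. Setting $T_i=\{\sigma(j):j\ge i\}$, the coefficient of $y_k$ in $\sum_{j=i}^{m}(y_{\sigma(j)}-y_{\sigma(j)+1})$ equals $\mathbf 1[k\in T_i]-\mathbf 1[k-1\in T_i]$, so $A_i^\sigma=T_i\,\Delta\,(T_i+1)$ is exactly the set of indices at which the indicator of $T_i$ jumps; equivalently $A_i^\sigma$ consists of the left endpoint and the right-endpoint-plus-one of each maximal run of consecutive integers of $T_i$ (the index $m+1$, where $y_{m+1}=0$, is irrelevant for \eqref{eq5.1}). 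Since $\sigma\in\mathscr P_1$ permutes the pairs $\{2k-1,2k\}$, I would record the induced permutation $\pi$ of $\{1,\dots,m/2\}$, for which $\sigma$ carries the block $\{2p-1,2p\}$ onto $\{2\pi(p)-1,2\pi(p)\}$. A short case check on the orientation inside a block then shows: for odd $i$ the tail $T_i$ is a union of full blocks, whose runs begin at odd and end at even indices, so every element of $A_i^\sigma$ is odd; for even $i$ the tail $T_i$ is a union of full blocks together with one half-block, and this half-block contributes exactly one even element to $A_i^\sigma$, namely $\overline\sigma(i)$.

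\textbf{Even indices and reduction.} For even $i$ I would put $\widetilde\sigma(i)=\overline\sigma(i)$; then $\widetilde\sigma(i)$ is even, \eqref{eq5.1} holds trivially because $\overline\sigma(i)\in A_i^\sigma$, and as $i$ ranges over the even indices $\overline\sigma(i)=2\pi(\cdot)$ ranges bijectively over the even values. It remains to define $\widetilde\sigma$ on the odd indices as a bijection onto the odd values for which \eqref{eq5.1} also holds. Writing $z_p=y_{2p-1}$, $R_p=\pi(\{p,\dots,m/2\})$ and $A_p=\big(R_p\,\Delta\,(R_p+1)\big)\cap\{1,\dots,m/2\}$ (so that the odd elements of $A_{2p-1}^\sigma$ are the $2k-1$ with $k\in A_p$, and $A_p\ni\min R_p$ is nonempty), and seeking $\widetilde\sigma(2p-1)=2\tau(p)-1$, the requirement becomes: find a permutation $\tau$ of $\{1,\dots,m/2\}$ with $|z_{\tau(p)}|\le\max_{k\in A_p}|z_k|$ for all $p$. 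On $T^\sigma_{\varepsilon,2,\gamma}$ the magnitudes $|z_1|,\dots,|z_{m/2}|$ are pairwise separated by a factor larger than $\gamma$, hence strictly totally ordered, so the sets $\{v:|z_v|\le\max_{k\in A_p}|z_k|\}$ are nested; by Hall's marriage theorem such a $\tau$ then exists once
\[
\#\{p:\,A_p\subseteq W\}\le|W|\qquad\text{for every }W\subseteq\{1,\dots,m/2\}.
\]

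\textbf{The counting bound.} Fix $W$ and suppose $A_p\subseteq W$. Then the indicator of $R_p$ may jump only at points of $W$, so it is constant on each segment into which $W$ partitions $\{1,\dots,m/2\}$; moreover the part of $\{1,\dots,m/2\}$ preceding the smallest element of $W$ is forced to be excluded (its indicator must match the value $0$ at $0$). Hence every such $R_p$ is a union of segments drawn from the family of at most $|W|$ disjoint nonempty ``free'' segments that $W$ determines. Distinct unions of disjoint segments are comparable only when one index set contains the other, and since $R_{m/2}\subset\cdots\subset R_1$ is a strictly increasing chain, the $R_p$ with $A_p\subseteq W$ form a chain of distinct nonempty subsets of a set of cardinality at most $|W|$, of which there are at most $|W|$. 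This is the displayed inequality, so $\tau$ exists; combining it with the even-index assignment yields $\widetilde\sigma\in\mathscr P$ with the correct parity and \eqref{eq5.1}.

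\textbf{Main obstacle.} The parity dichotomy for $A_i^\sigma$ is only a bookkeeping exercise once the block description of $\sigma\in\mathscr P_1$ and the boundary index $m+1$ are treated with care. The genuine difficulty is the odd-index matching: it relies essentially on the separation built into $T^\sigma_{\varepsilon,2,\gamma}$ to linearly order the $|z_p|$, and on the Hall-type counting bound above, whose proof via the ``free segments'' of $W$ is the technical heart of the argument.
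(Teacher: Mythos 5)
Your proof is correct, and the first half of it (the block description of $\sigma\in\mathscr P_1$, the identification of the unique even element of $A_i^\sigma$ as $\overline\sigma(i)$ for even $i$, and the assignment $\widetilde\sigma(i)=\overline\sigma(i)$ there) coincides with the paper's. Where you genuinely diverge is the odd-index matching. The paper extracts an actual system of distinct representatives: it shows the suffix-covering property $\{\overline\sigma(2k-1),\dots,\overline\sigma(m-1)\}\subset\bigcup_{i\ge 2k-1,\ i\ \mathrm{odd}}A_i^\sigma$ by a "continuing in this way" induction and then asserts the existence of distinct $k_i\in A_i^\sigma$ exhausting the odd values, so that \eqref{eq5.1} holds by membership alone and no property of $T^\sigma_{\varepsilon,2,\gamma}$ is used for odd $i$. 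You instead settle for the weaker (and sufficient) domination $|y_{\widetilde\sigma(i)}|\le\sup_{j\in A_i^\sigma}|y_j|$, use the $\gamma$-separation of the odd $|y_k|$ to turn the problem into a bipartite matching with nested threshold neighborhoods, and verify Hall's condition in full via the observation that each tail $R_p$ with $A_p\subseteq W$ must be a union of the at most $|W|$ "free segments" cut out by $W$, so that the chain $R_{m/2}\subsetneq\cdots\subsetneq R_1$ contributes at most $|W|$ such sets. What each approach buys: the paper's is shorter and purely combinatorial, but deducing an SDR from a covering condition verified only for suffix subfamilies is exactly the delicate point (Hall requires all subfamilies), so the induction has to be unwound carefully; your version replaces that with a clean, fully checked Hall condition at the modest cost of invoking the separation structure of $T^\sigma_{\varepsilon,2,\gamma}$ (harmless, since ties among the $|z_v|$ do not actually break the threshold-set argument) and of proving the slightly weaker conclusion, which is all the application in Step~4 of Proposition~\ref{moments} needs.
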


\begin{proof}  
When $i$ is even, noting that $\sigma\in \mathscr P_1$,  $A_i^\sigma$ contains only one $y$ with even index which is $y_{\overline{\sigma}(i) }$. Recall that for an even number $i,$ $\overline{\sigma}(i)$ equals $\sigma(i)$ if $\sigma(i)$ is even and $\sigma(i-1)$ otherwise.   Therefore, on  $T_{\varepsilon, 2, \gamma}^{\sigma}$, $\sup_{j\in A^{\sigma}_i} |y_j|=|y_{\overline{\sigma}(i)}|$, and we may just define $\widetilde \sigma(i)=\overline{\sigma}(i)$ for $i$ even.

When $i$ is odd, $\sigma\in \mathscr P_1$ implies that $A_i^\sigma$ only contains $y$ variables with odd indices. Define $\overline\sigma(i)=\sigma(i)$ if $\sigma(i)$ is odd, and $\overline\sigma(i)=\sigma(i+1)$ otherwise.  Clearly $A_{m-1}^\sigma=\{\overline{\sigma}(m-1),\overline{\sigma}(m-1)+2\}$. Here we use the convention $y_k=0$ if $k>m$.  Hence $\overline\sigma(m-1)\in A_{m-1}^{\sigma}.$ For $\overline{\sigma}(m-3)$, if it does not belong to $A_{m-3}^{\sigma}$, then it must coincide with $\overline{\sigma}(m-1)+2$, and hence lies in $A_{m-1}^\sigma$. Therefore, we have $\{\overline \sigma(m-3), \overline\sigma(m-1)\}\subset A_{m-3}^\sigma\bigcup A_{m-1}^\sigma.$  Continuing in this way,  we have
$$\Big\{\overline\sigma(2k-1), \overline\sigma(2k+1), \dots, \overline\sigma(m-1)\Big\}\subset \bigcup_{i=2k-1; ~ i\text{ odd}}^{m-1} A_{i}^\sigma ~~\text{ for } ~~ k=1, 2,\dots, m/2. $$
Noting that $\big\{\overline\sigma(1), \overline\sigma(3), \dots, \overline\sigma(m-1)\big\}=\big\{1,3, \dots, m-1\big\},$ there exists an odd number $k_i $ in each $A_i^\sigma$ with $i$ odd such that $\big\{k_i, i=1, 3, \dots, m-1\big\}=\big\{1, 3, \dots, m-1\big\}$, and thus we may define $ \widetilde \sigma(i)=  k_i$ for $i$ odd.  The proof is concluded. 
\end{proof}

\bigskip

\bigskip

$\begin{array}{cc}
\begin{minipage}[t]{1\textwidth}
{\bf Jian Song}\\
Department of Mathematics, University of Hong Kong, Hong Kong\\
\texttt{txjsong@hku.hk}
\end{minipage}
\hfill
\end{array}$

$\begin{array}{cc}
\begin{minipage}[t]{1\textwidth}
{\bf Fangjun Xu}\\
School of Statistics, East China Normal University, Shanghai 200241, China \\
NYU-ECNU Institute of Mathematical Sciences, NYU Shanghai, Shanghai 200062, China\\
\texttt{fangjunxu@gmail.com, fjxu@finance.ecnu.edu.cn}
\end{minipage}
\hfill
\end{array}$

$\begin{array}{cc}
\begin{minipage}[t]{1\textwidth}
{\bf  Qian Yu }\\
School of Statistics, East China Normal University, Shanghai 200241, China \\
\texttt{qyumath@163.com}
\end{minipage}
\hfill
\end{array}$


\begin{thebibliography}{99}

\bibitem{berman}  S. Berman: Local nondeterminism and local times of Gaussian processes. \textit{Indiana Univ. Math. J.}, \textbf{23}, 69--94, 1973.

\bibitem{bx} J. Bi and F. Xu: A first-order limit law for functionals of two independent fractional Brownian motions in the critical case. \textit{J. Theor.  Probab.}, \textbf{29}, 941--957, 2016.

\bibitem{biane} P. Biane:  Comportement asymptotique de certaines fonctionnelles
additives de plusieurs mouvements browniens. \textit{S\'{e}minaire
de Probabilit\'{e}s, XXIII}, Lect. Notes Math., \textbf{1372},
Springer, Berlin, 198--233, 1989.

\bibitem{XChen} X. Chen: Intersection local times: large deviations and laws of the iterated logarithm. \textit{Adv. Lect. Math.} (ALM) Int. Press, Somerville, MA, Vol. 2 195--253, 2008.

\bibitem{durrett} R. Durrett: Probability: Theory and Examples.  Cambridge University Press, 2010.

\bibitem{ghx} J. Guo, Y. Hu and Y. Xiao: Higher-order derivative of intersection local time for two independent fractional Brownian motions. arXiv:1706.06980.

\bibitem{legall86a} J. F. LeGall: Propri\'{e}t\'{e}s d'intersection des marches al\'{e}atoires I. Etude des cas critiques. \textit{ Commun. Math. Phys.}, \textbf{104}, 471--507, 1986.

\bibitem{legall86b} J. F. LeGall: Propri\'{e}t\'{e}s d'intersection des marches al\'{e}atoires II. Etude des cas critiques. \textit{ Commun. Math. Phys.}, \textbf{104}, 509--528, 1986.

\bibitem{luan} N. Luan: Strong Local Non-Determinism of Sub-Fractional Brownian Motion. \textit{Applied Mathematics}, \textbf{6}, 2211--2216, 2016.

\bibitem{nol} D. Nualart and S. Ortiz-Latorre: Intersection local time for two independent fractional Brownian motions. \textit{ J. Theor. Probab.}, \textbf{20}, 759--767, 2007.

\bibitem{nx1}  D. Nualart and F. Xu: Central limit theorem for an additive functional of the fractional Brownian motion II. \textit{Electron. Commun. Probab.}, \textbf{18}(74), 1--10, 2013.

\bibitem{nx2}  D. Nualart and F. Xu: Central limit theorem for functionals of two independent fractional Brownian motions. \textit{Stoch. Proc. Appl.}, \textbf{124}, 3782--3806, 2014.
 
\bibitem{oss} M. Oliveira, J. Silva, L. Streit: Intersection local times of independent fractional Brownian motions as generalized white noise functionals. \textit{Acta. Appl. Math.}, \textbf{113}, 17--39, 2011.
 
\bibitem{tudor} C. Tudor and Y. Xiao:  Sample path properties of bifractional Brownian motion. \textit{Bernoulli},  \textbf{13}, 1023--1052, 2007.
 
\bibitem{wu_xiao} D. Wu and Y. Xiao: Regularity of intersection local times of fractional Brownian motions. \textit{J. Theor.  Probab.}, \textbf{23}, 972--1001, 2010.

\bibitem{xu} F. Xu: Second-order limit laws for occupation times of the fractional Brownian motion. \textit{J. Appl. Prob.}, \textbf{54}, 444--461, 2017.

\bibitem{yan} L. Yan, J. Liu and C. Chen: On the collision local time of Bifractional Brownian motions. \textit{Stochastics and Dynamics}, \textbf{9}, 479--491, 2009.

\end{thebibliography}
\end{document}